\newtheorem{definition}{Definition}[section]
\newtheorem{theorem}[definition]{Theorem}
\newtheorem{proposition}[definition]{Proposition}
\newtheorem{corollary}[definition]{Corollary}
\newtheorem{lemma}[definition]{Lemma}
\newtheorem{remark}[definition]{Remark} 
\newcommand\cInd{\operatorname{c-Ind}} 
\def\tr{{\rm tr}\,} 
\def\GL{{\rm GL}} 
\def\M{{\rm M}}
\begin{document} \title{Orbits of cuspidal types on $\GL_{p}(\mathcal{O}_{F})$} 
\author[A.Szumowicz]{Anna Szumowicz} 
\address{Caltech, The Division of Physics, Mathematics and Astronomy, 1200 E California Blvd, Pasadena CA 91125}  
\email{anna.szumowicz@caltech.edu}
\date{\today} 
\begin{abstract}Let $F$ be a non-Archimedean local field and let $\mathcal{O}_{F}$ be its ring of integers. The orbit of an irreducible representation $\rho$ of $\mathrm{GL}_n(\mathcal{O}_F)$ is a conjugacy class in $\mathfrak{gl}_n(\mathcal{O}_F)$ attached to $\rho$ by means of Clifford's theory. We give a description of orbits of cuspidal types on $\GL_{p}( \mathcal{O}_{F})$, with $p$ prime. We determine which of them are regular and we provide an example which shows that the orbit of a representation does not always determine whether it is a cuspidal type or not.\end{abstract}  
\maketitle

%The main motivation behind this paper is to find new explicit information and invariants of types in general linear groups over a local non-Archimedean field. 
\begin{section}{Introduction}
 Let $G$ be a profinite group. A classical theorem of Clifford allows us to study the irreducible smooth representations of $G$ through their restrictions to open normal subgroups. When applied to $\mathrm{GL}_n(\mathcal{O}_F)$, where $F$ is a local non-Archimedean field and $\mathcal{O}_F$ is the ring of integers, this leads to the theory of orbits as introduced by Shintani \cite{Sh} and studied in \cite{H}, \cite{KOS}, \cite{SS}. %This approached proved fruitful in many quantitative questions regarding the irreducible representations of $p$-adic groups \cite{TODO:RepresentationZetaetc}.
 On the other hand we can look at the representations of $\GL_{n}(\mathcal{O}_{F})$ through the lens of the smooth representation theory of $\GL_{n}(F)$. The building blocks of the latter are given by the cuspidal representations. The cuspidal representations of $\GL_n(F)$ were parametrized by Bushnell and Kutzko \cite{BK} using cuspidal types, which form a special class of irreducible representations of open compact subgroups of $\GL_n(F)$. The main goal of this paper is to clarify the connection between these two points of view and describe the cuspidal types in terms of orbits. All representations we consider are smooth and over $\mathbb{C}$. 
\subsection{Orbits of irreducible representations of $\GL_{n}(\mathcal{O}_{F})$.}
The exposition below follows \cite{S}. Any irreducible smooth representation $\rho $ of $\GL_{n}(\mathcal{O}_{F})$ factors through a finite group $\GL_{n}(\mathcal{O}_{F}/\mathfrak{p}_{F}^{r})$ where $r\geqslant 1$ and $\mathfrak{p}_{F}$ is the maximal ideal of $\mathcal{O}_{F}$. The minimal natural number $r$ with this property is called the conductor of the representation $\rho $. Let $\rho $ be an irreducible smooth representation of $\GL_{n}(\mathcal{O}_{F})$ with conductor $r>1$. Sometimes it will be convenient to view $\rho$ as a representation of $\GL_{n}(\mathcal{O}_{F}/\mathfrak{p}_{F}^{r})$. In this case we will denote it by $\bar{\rho}$. Let $l=\lfloor \frac{r+1}{2}\rfloor$ and let $K^{l}$ be the kernel of the reduction map $\GL_{n}(\mathcal{O}_{F}/\mathfrak{p}_{F}^{r})\to \GL_{n}(\mathcal{O}_{F}/\mathfrak{p}_{F}^{l})$. Note that $K^{l}$ is an abelian group. We fix once and for all an additive character  $\psi :F\rightarrow \mathbb{C}^{\times }$ with conductor $\mathfrak{p}_{F}$, i.e. $\mathfrak{p}_{F}$ is the biggest fractional ideal of $F$ on which $\psi $ is trivial. Write $\M_{n}(R)$ for the set of $n\times n$ matrices with entries in a ring $R$. By Clifford's theorem (see \cite[6.2]{I}), \begin{equation}\label{Clifforddecom}
\bar{\rho}\mid _{K^{l}}=m\bigoplus _{\bar{\alpha } \sim \bar{\alpha _{1}}}\bar{\varphi}_{\bar{\alpha }},
\end{equation} where $\bar{\alpha}_{1}\in \M _{n}(\mathcal{O}_{F}/\mathfrak{p}_{F}^{r-l})$, $\bar{\alpha }$ runs over the conjugacy class of $\bar{\alpha _{1}}$ under $\GL_{n}(\mathcal{O}_{F}/\mathfrak{p}^{r-l})$, $m\in \mathbb{N}$ and the characters  $\bar{\varphi}_{\bar{\alpha}}:K^{l}\rightarrow \mathbb{C}^{\times }$ are defined as $$\bar{\varphi }_{\bar{\alpha}}(1+x)=\psi(\varpi_F^{-r+1}\tr (\widehat{\alpha}\widehat{x})),$$ for some lifts $\widehat{x}, \widehat{ \alpha }$ of $x,\bar{\alpha }$ to elements in $\M _{n}(\mathcal{O}_{F})$. The definition of $\bar{\varphi }_{\bar{\alpha }}$ does not depend on the choice of the lifts. If a matrix $\alpha\in \M _{n}(\mathcal{O}_{F})$ is such that its image in $\M _{n}(\mathcal{O}_{F}/\mathfrak{p}_{F}^{r-l})$ appears in the decomposition (\ref{Clifforddecom}) we say that $\alpha$ \textbf{is in the orbit of} $\rho$. We say that a representation is \textbf{regular} if its orbit contains a matrix whose image in $\M _{n}(\mathcal{O}_{F}/\mathfrak{p}_{F})$ has abelian centralizer in $\GL_{n}(\mathcal{O}_{F}/\mathfrak{p}_{F})$. The regular representations of $\GL_{n}(\mathcal{O} _{F})$ were introduced by Shintani \cite{Sh} and were rediscovered by Hill \cite{H}. Those are in certain sense the best behaved representations of $\GL_{n}(\mathcal{O} _{F})$. Krakovski, Onn and Singla \cite{KOS} constructed all such representations under the condition that the characteristic of the residue field of $F$ is odd. Stasinski and Stevens in \cite{SS} constructed all regular representations of $\GL_{n}(\mathcal{O}_{F})$.
%An irreducible smooth representation of $\GL_n(\mathcal{O} _F)$ factors through  $\GL_n(\mathcal{O}/\mathfrak{p}_F^r)$ where $\mathfrak{p}_F$ is the maximal ideal in $\mathcal{O}_F$ and $r\geqslant 1$. 

\begin{subsection}{Cuspidal types}
Let us recall the definition of a cuspidal type. Let $n \in \mathbb{N}$, $n \geqslant 1$ and let $\pi$ be an irreducible cuspidal representation of $\GL_{n}(F)$. Let
\begin{equation} 
\label{definitioninertialsupporti} \mathfrak{I}(\pi)=\lbrace\pi' \mid\: \pi' \cong \pi \otimes\chi\circ {\rm det}\quad \text{for some unramified character}\: \chi\: \text{of}\: F^{\times }\rbrace 
\end{equation} be the inertial support of $\pi $.   
\begin{definition}Let $H$ be a compact open subgroup of $\GL_{n}(F)$ and let $\pi $ be an irreducible cuspidal representation of $\GL _{n}(F)$. We say that an irreducible smooth representation $\lambda $ of $H$ is a \textbf{type} on $H$ for $\mathfrak{I} (\pi )$ if the following condition is satisfied: for any irreducible smooth representation $\pi _{1}$ of $\GL_{n}(F)$ 
\begin{equation*} \pi _{1} \mid _{H} \quad \text{contains}\quad \lambda \quad  \text{if and only if}\quad \mathfrak{I}(\pi _{1})= \mathfrak{I}(\pi ). \end{equation*} \end{definition} 
In this paper we mostly consider types on $\GL_{n}(\mathcal{O}_{F})$ so we will supress $\GL_{n}( \mathcal{O}_{F})$ from the notation. We say that a representation is a cuspidal type when it is a type on $\GL _{n}( \mathcal{O}_{F})$ for $\mathfrak{I} ( \pi )$ for some irreducible cuspidal representation $\pi $ of $\GL _{n}(F)$. Henniart gave an explicit description of cuspidal types on $\GL_{2}(\mathcal{O}_{F})$ in \cite{Hen}. Bushnell--Kutzko's construction of irreducible cuspidal representations of $\GL_{n}(F)$ implies the existence of cuspidal types on $\GL _{n}(\mathcal{O} _{F})$ (see \cite{P}) for every $n\in\mathbb{N}$, $n\geqslant 2$. Finally, Paskunas \cite{P} proved that for any irreducible cuspidal representation $\pi $ of $\GL_{n}(F)$ there exists a unique up to isomorphism irreducible smooth representation $\lambda$ of $\GL _{n} ( \mathcal{O} _{F})$ depending only on $\mathfrak{I}(\pi )$ which is a cuspidal type on $\GL _{n}( \mathcal{O} _{F})$ for $\mathfrak{I}(\pi )$. Using that and the local Langlands correspondence he deduced the inertial Langlands correspondence between cuspidal types on $\GL_n(\mathcal O_F)$ and certain irreducible representations of the inertia group of $F$ (see \cite[Corollary 1.4]{P}).

% The regular representations of $\GL_{n}(\mathcal{O} _{F})$ were introduced by Shintani \cite{Sh} and were rediscovered by Hill \cite{H}. Those are in certain sense the best behaved representations of $\GL_{n}(\mathcal{O} _{F})$.
% in particular Stasinski and Stevens in \cite{SS} gave an explicit description of them. Their description is based on Clifford theory. %Parametrizing all the irreducible smooth representations of $\GL_{n}(\mathcal{O} _{F})$ is known to be very hard. It contains the matrix pair problem (see \cite{Nag}). 

\end{subsection}

\begin{subsection}{Cuspidal types in terms of orbits} \label{introductionorbits}  
In \cite{S} Stasinski asked which cuspidal types are regular. Our main results answer this question when $n$ is a prime number.
%In this paper we determine which cuspidal types on $\GL_{p}(\mathcal{O}_{F})$, where $p$ is a prime number, are regular. 
%Determining orbits of regular cuspidal types allows us to give an explicit description as in \cite{SS}. 
We classify all orbits which can give cuspidal types on $\GL_{p}(\mathcal{O}_{F})$ with conductor at least $4$ and we precisely determine orbits of cuspidal types in small conductor case for $p=2$. The proof relies on Bushnell-Kutkzo's theory, which parametrizes the irreducible cuspidal representations of $\GL_{n}(F)$ in terms of strata (see \cite{BK}). We translate this description into the (arguably simpler) language of orbits. Some information is lost, but for high conductors we give a full characterization of cuspidal types in terms of orbits when $n$ is prime.   
%from Clifford theory, the classification of cuspidal representations of $\GL_{n}(F)$ due to Bushnell and Kutzko specialized to $n=p$ and the properties of the actions of subgroups of $\GL_{2}(F)$ on their Bruhat--Tits buildings. 

Recall that a polynomial $x^{n}+a_{n-1} x^{n-1}+ \ldots +a_{0}\in\mathcal{O}_{F}[x]$ is called Eisenstein if $a_{1}, \ldots , a_{n-1} \in \mathfrak{p}_{F}$ and $a_{0} \in \mathfrak{p}_{F}\setminus \mathfrak{p}_{F}^{2}$.
For $i\in\mathbb{N}$, $i\geq 1$ let $$U_{\mathfrak{M}}^{i}:=
\begin{pmatrix}1+\mathfrak{p}_{F}^{i}& \mathfrak{p}_{F}^{i}\\
\mathfrak{p}_{F}^{i}&1+\mathfrak{p}_{F}^{i}
\end{pmatrix}.$$ For a matrix $\alpha \in\varpi_{F}^{-r+1}\M_{2}(\mathcal{O}_{F})$, $r\in\mathbb N$, $r\geq 1$, we define the character $\psi_{\alpha}\colon U_{\mathfrak{M}}^{\lfloor \frac{r+1}{2}\rfloor}\to\mathbb{C}^{\times}$: $$\psi_{\alpha}(1+x):=\psi(\tr{\alpha x}).$$
Finally, let $k_F:=\mathcal O/\mathfrak{p}_F$.
\begin{theorem}
\label{TheoremGL2i}
A cuspidal type on $K_{2}:=\GL_{2}( \mathcal{O}_{F})$ is precisely a one-dimensional twist of one of the following:
\begin{enumerate}
\item a representation inflated from some irreducible cuspidal representation of $\GL_{2}(k_{F})$; 
\item a representation whose orbit contains a matrix whose characteristic polynomial is irreducible mod $\mathfrak{p}_{F}$;
\item a representation whose orbit contains a matrix $\beta _{0} $ whose characteristic polynomial is Eisenstein and which satisfies one of the following: \begin{enumerate}
\item it has conductor at least $4$;
%\item has conductor 2 and is isomorphic to $\mathrm{Ind}_{\mathrm{Stab}_{K_{2}}(\bar{\psi}_{\bar{\beta} } )}^{K_{2}}\theta $ for a certain $\theta $ containing a lift of $\bar{\psi}_{\bar{\beta}}$ to ${\rm Stab}_{K_{2}}(\bar{\psi} _{\bar{\beta}})$ and such that  $\theta $ is non-trivial on $\begin{pmatrix}
%1&\mathcal{O}_{F}\\
%0&1
%\end{pmatrix}$
\item it has conductor $r=2$ or $3$ and is isomorphic to $\mathrm{Ind}_{S}^{K_{2}}\theta $ where $S=\bigcup_{a\in\mathcal{O}_{F}^{\times}}
\begin{pmatrix}
a+\mathfrak{p}_{F}& \mathcal{O}_{F}\\
\mathfrak{p}_{F}& a+\mathfrak{p}_{F}
\end{pmatrix}$, $\theta$ is an irreducible character of $S$ such that $\theta \mid _{ U _{ \mathfrak{M}}^{ \lfloor \frac{r+1}{2} \rfloor}}=m \psi _{\varpi_{F}^{-r+1}\beta_{0} }$ for certain $m \in \mathbb{Z}$ and $ \theta $ does not contain the trivial character of the group $\begin{pmatrix}
1&\mathfrak{p}_{F}^{r-2}\\
0&1
\end{pmatrix}$.   
\end{enumerate}  
\end{enumerate}
\end{theorem} 
%\begin{theorem}\label{TheoremGL2i} A cuspidal type on $K_{2}:=\GL_{2}( \mathcal{O}_{F})$ is precisely a one-dimensional twist of one of the following:
%\begin{enumerate}
%\item a representation inflated from some irreducible cuspidal representation of $\GL_{2}(k_{F})$; 
%\item a representation whose orbit contains a matrix whose characteristic polynomial is irreducible mod $\mathfrak{p}_{F}$;
%\item a representation whose orbit contains a matrix $\beta $ whose characteristic polynomial is Eisenstein and which satisfies one of the following conditions: \begin{enumerate}
%\item it has conductor at least $4$;
%\item it has conductor 2 and is isomorphic to $\mathrm{Ind}_{\mathrm{Stab}_{K_{2}}(\bar{\psi}_{\bar{\beta} } )}^{K_{2}}\theta $ for a certain $\theta $ containing a lift of $\bar{\psi}_{\bar{\beta}}$ to ${\rm Stab}_{K_{2}}(\bar{\psi} _{\bar{\beta}})$ and such that  $\theta $ is non-trivial on $\begin{pmatrix}
%1&\mathcal{O}_{F}\\
%0&1
%\end{pmatrix}$; 
%\item it has conductor $r=2$ or $3$ and is isomorphic to $\mathrm{Ind}_{\mathrm{Stab} _{K_{2}}(\bar{\varphi} _{\bar{\beta} })}^{K_{2}}\theta $ where $\theta \mid _{U _{ \mathfrak{M}}^{\lfloor \frac{r+1}{2} \rfloor}}=m \varphi _{ \beta }$ for certain $m \in \mathbb{Z}$ and $ \theta $ does not contain the trivial character of $\begin{pmatrix}
%1&\mathfrak{p}_{F}^{r-2}\\
%0&1
%\end{pmatrix}$.   
%\end{enumerate}  
%\end{enumerate}                
%\end{theorem} 
To state the version for $\GL_p(\mathcal{O}_{F})$, $p\geq 2$ we need one more piece of notation. Let $\mathfrak{I}$ be the $\mathcal{O}_{F}$-order consisting of matrices that are upper triangular modulo $\mathfrak{p}_{F}$. Let $U_{\mathfrak{I}}$ be the group of  invertible elements of $\mathfrak{I}$ and let $\mathfrak{P}_{\mathfrak{I}}$ be the Jacobson radical in $\mathfrak{I}$. We choose $\Pi _{\mathfrak{I}}$ such that $\Pi _{\mathfrak{I} }\mathfrak{I}=\mathfrak{P}_{\mathfrak{I}}$ (see Lemma \ref{principal} and the following discussion). 
\begin{theorem} \label{TheoremGLpi}
If $\lambda $ is a cuspidal type on $K=\GL_{p}( \mathcal{O}_{F})$, then it is a one-dimensional twist of one of the following:
\begin{enumerate} \item a representation which is inflated from an irreducible cuspidal representation of $\GL_{p}(k_{F})$;
\item a representation whose orbit contains a matrix whose characteristic polynomial is irreducible modulo $ \mathfrak{p}_{F}$; 
\item a representation whose orbit contains a matrix of the form $\Pi _{ \mathfrak{I}}^{j} B$ where $0<j<p$ and $B \in U_{ \mathfrak{I}}$.\end{enumerate}
Moreover, if $\pi$ is an irreducible cuspidal representation of $G$ such that $l(\pi)\leqslant l(\chi\pi)$ for all one-dimensional characters $\chi:F^{\times }\to \mathbb{C}^{\times}$ \footnote{for the definition of the level $l(\pi)$ see subsection \ref{simplestrata}}, then the cuspidal type for $\mathfrak{I}(\pi)$ is of the form $(1)$, $(2)$ or $(3)$ (no twisting is needed).
Conversely, if a representation is a one-dimensional twist of a representation of the form $(3)$ and has conductor at least $4$, or is of the form $(1)$ or $(2)$, then it is a cuspidal type. \end{theorem} 
%\begin{theorem}\label{TheoremGLpi} 
%If $\lambda $ is a cuspidal type on $K:=\GL_{p}( \mathcal{O}_{F})$, then it is a one-dimensional twist of one of the following:
%\begin{enumerate} 
%\item a representation which is inflated from an irreducible cuspidal representation of $\GL_{p}(k_{F})$;
%\item a representation whose orbit contains a matrix whose characteristic polynomial is irreducible modulo $ \mathfrak{p}_{F}$; 
%\item a representation whose orbit contains a matrix of the form $\Pi _{ \mathfrak{I}}^{j} B$ where $0<j<p$ and $B \in U_{ \mathfrak{I}}$.\end{enumerate} Moreover if a representation is a one-dimensional twist of a representation of the form $(3)$ and has conductor at least $4$, or is of the form $(1)$ or $(2)$, then it is a cuspidal type. \end{theorem}    
%Theorem \ref{TheoremGLpi} for $p=2$ coincides with Theorem \ref{TheoremGL2i} as long as a representation is of the conductor $r\geqslant 4$. 
%Theorem \ref{TheoremGL2i} for representations of conductor $r=2$ or $3$ gives a more precise description of cuspidal types on $\GL_{2}( \mathcal{O}_{F})$. 
We give an example (see section \ref{sectionexample})which shows that for low conductor orbit does not determine whether a representation is a cuspidal type or not.
Representations of $\GL_p(\mathcal{O}_F)$ whose orbit contains a matrix whose characteristic polynomial is irreducible modulo $\mathfrak{p}_{F}$ are regular. In Subsection \ref{regularity} we prove that a matrix of the form $\Pi _{\mathfrak{I}}^{j} B$ with $0<j<p$ and $B\in U _{ \mathfrak{I}} $ is regular if and only if $j=1$. The characteristic polynomial of a matrix of the form $\Pi _{ \mathfrak{I}}B$ is Eisenstein but this is not the case for the matrices of the form $\Pi _{\mathfrak{I}}^{j}B$ with $1<j<p$. Therefore a cuspidal type on $\GL_{p}(\mathcal{O}_{F})$ of conductor $r\geq 4$ is regular if and only if its orbit contains a matrix whose characteristic polynomial is irreducible modulo $\mathfrak{p}_{F}$ or a matrix whose characteristic polynomial is Eisenstein. In particular, for $p>2$, there are cuspidal types which are not regular. Indeed, if a representation has conductor at least $4$ and is of the form $(3)$ from Theorem \ref{TheoremGLpi} with $j>1$ then it is a cuspidal type but it is not regular.   \end{subsection}

\begin{subsection}{Outline of the paper}

In \textbf{Section \ref{sectionstrata}} we recall the properties of hereditary orders and simple strata,  restricting our attention to the case of $\GL_{p}(F)$ with $p$ prime. In \ref{simplestrata} we give an explicit description of simple strata which is one of the crucial ingredients in the proof of Theorem \ref{TheoremGLpi}. In \ref{sectionclasscusp} we recall the classification of irreducible cuspidal representations of $\GL_{p}(F)$. In \ref{orbits} we recall the Clifford's theorem and its conseqeunces. 

In \textbf{Section \ref{sectionproof}} we prove Theorem \ref{TheoremGL2i} and Theorem \ref{TheoremGLpi}. Then we determine which of the cuspidal types on $\GL _{p} (\mathcal{O} _{F})$ are regular.  

In \textbf{Section \ref{sectionexample}} we give an example of two representations of $\GL_{2}( \mathcal{O}_{F})$ with share the same orbit but one is  a cuspidal type and the other is not.       
      \end{subsection}

\begin{subsection}{Notation}We will write $\lfloor a \rfloor $ for the biggest integer less  than or equal to $a$ and $\tr M$ for the trace of a matrix $M$. For any local non-Archimedean field $E$ we will denote by $\mathcal{O}_{E}$ its ring of integers, by $\mathfrak{p}_{E}$ the maximal ideal in $\mathcal{O}_{E}$, by $\varpi_{E}$ a prime element in $E$, by $\mathcal{O}_{E}^{\times }$ the group of invertible elements of $\mathcal{O}_{E}$ and by $k_{E}$ the residue field of $E$. We fix a non-Archimedean local field $F$ and a prime number $p$ (which is not necessarily the characteristic of the residue field). Let $E/F$ be a finite field extension. Write $e(E/F)$ for the ramification index and $f(E/F)$ for the residue class degree. Let $G:= \GL _{p}(F)$ and $K:= \GL _{p}( \mathcal{O} _{F})$. We write $Z$ for the center of $G$. Let $V$ be a vector space over $F$ of dimension $p$ and $A:={\rm End}_{F}(V)$. For a local field $E$ we denote by $\nu _{E}$ the additive valuation on $E$ which takes value $1$ on a uniformizer. Write $\pi $ for a representation of $\GL_{n}(F)$ and let $\chi $ be a character of $F^{\times}$. Set $\chi \pi:=( \chi \circ {\rm det})\otimes \pi$. If $H$ is a subgroup of $G$, we denote by $N_{G}(H)$ the normalizer of $H$ in $G$. Write $\beta^{g}=g^{-1}\beta g$ for any matrix $\beta\in \textrm{M}_{p}(F)$ and $g\in G$. For a character $\bar{\varphi}_{\bar{\alpha}}$ on $K^{l}$ let ${\rm Stab}_{\GL _{2}(\mathcal{O} _{F})}\bar{\varphi}_{\bar{\alpha}}$ be the preimage of ${\rm Stab}_{\GL_{2}(\mathcal{O}_{F}/\mathfrak{p}^{r})}\bar{\varphi}_{\bar{\alpha}}$ through the canonical projection $\GL _{2}( \mathcal{O} _{F})\twoheadrightarrow \GL_{2}(\mathcal{O}_{F}/\mathfrak{p}^{r})$.    \end{subsection} \end{section} 

\section*{Acknowledgements}I am grateful to Alexander Stasinski, Anne- Marie Aubert, Uri Onn, Shaun Stevens, Peter Latham, Arnaud Mayeux for helpful discussions. The author was supported by UK EPSRC Doctoral Studentship. The work was done partially while the author was visiting the Institute for Mathematical Sciences, National University of Singapore in 2018-2019.            
 %\tableofcontents

\begin{section}{Simple strata and cuspidal representations} 
\label{sectionstrata}  

\begin{subsection}{Cuspidal types on $K$}
Paskunas in \cite{P} has proven the uniqueness of cuspidal types:

\begin{theorem}[cf \cite{P}, Theorem 1.3 ]\label{Paskunas}Let $\pi $ be an irreducible cuspidal representation of $G$. Then there exists a smooth irreducible representation $\rho$ of $K$ depending on $\mathfrak{I}(\pi )$, such that $\rho $ is a cuspidal type on $K$ for $\mathfrak{I}(\pi )$. Moreover, $\rho $ is unique (up to isomorphism) and it occurs in $\pi\mid _{K}$ with multiplicity 1.   
\end{theorem}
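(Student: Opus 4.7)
The plan is to use the Bushnell--Kutzko classification of irreducible cuspidal representations of $G$ via maximal simple types, and then transport such a type from its natural compact-mod-centre home to $K$ by induction.

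First, I would invoke Bushnell--Kutzko to write $\pi \cong \cInd_{\tilde{J}}^{G} \tilde{\Lambda}$, where $(\tilde{J}, \tilde{\Lambda})$ is an extended maximal simple type attached to a simple stratum $[\mathfrak{A}, n, 0, \beta]$ with $\mathfrak{A}$ a maximal hereditary $\mathcal{O}_{F}$-order in $A$. All maximal hereditary orders in $A$ are $G$-conjugate to $\M_{p}(\mathcal{O}_{F})$, so after replacing the data by a $G$-conjugate we may assume $\mathfrak{A}^{\times} \subset K$. Then $J := \tilde{J} \cap K$ is an open subgroup of $K$, one has $\tilde{J} = \langle \varpi_{E}\rangle J$ with $E = F[\beta]$, and $\lambda := \tilde{\Lambda}|_{J}$ is the underlying (non-extended) simple type.

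Second, I would set $\rho := \mathrm{Ind}_{J}^{K} \lambda$ and prove that $\rho$ is irreducible by a Mackey computation: the $G$-intertwining of $\lambda$ equals $\tilde{J}$, a fundamental property of simple types, and since $\tilde{J} \cap K = J$ (as $\varpi_{E}\cdot 1 \notin K$ in either the totally ramified or unramified case), Mackey's irreducibility criterion applies. The same argument shows $\rho$ is independent, up to isomorphism, of the choice of extension $\tilde{\Lambda}$, so it depends only on $\mathfrak{I}(\pi)$. Next I would verify the defining property of a cuspidal type. Unramified twists $\chi \circ \det$ are trivial on $K$, hence for any $\pi_{1} \in \mathfrak{I}(\pi)$ one has $\pi_{1}|_{K} \cong \pi|_{K}$, and $\rho$ occurs in $\pi|_{K}$ by Frobenius reciprocity applied to $\cInd_{\tilde{J}}^{G} \tilde{\Lambda}$ combined with a Mackey decomposition. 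Conversely, if $\pi_{1}|_{K}$ contains $\rho$, then $\pi_{1}|_{J}$ contains $\lambda$, and the type property of $(J, \lambda)$ (already established by Bushnell--Kutzko) forces $\mathfrak{I}(\pi_{1}) = \mathfrak{I}(\pi)$. Multiplicity one in $\pi|_{K}$ reduces, via Frobenius reciprocity and the irreducibility of $\rho$, to the analogous well-known statement that $\lambda$ appears with multiplicity one in $\pi|_{J}$.

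The main obstacle is uniqueness of $\rho$: a priori nothing so far excludes other non-isomorphic irreducible $K$-representations from serving as cuspidal types for $\mathfrak{I}(\pi)$. Ruling out a hypothetical alternative $\rho'$ requires showing that any irreducible representation of $K$ satisfying the type property must itself arise from a maximal simple type as above. Concretely, one would try to compare $\rho'$ and $\rho$ via restriction to a common compact subgroup where simple-type techniques apply, using the classification of all maximal simple types within a single unramified twist class to force $\rho' \cong \rho$. This is the delicate technical heart of the argument, and is where I expect almost all of the work to lie.
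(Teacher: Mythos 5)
The paper does not supply a proof of this statement: it is quoted as an external result of Paskunas (\cite{P}, Theorem~1.3), and the citation in the theorem header is the whole of the ``proof.'' So there is nothing in the paper's text to compare your argument against line by line; the relevant comparison is with Paskunas' own argument.

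Your existence and multiplicity-one sketch is broadly the right route and does roughly track Paskunas: induce a maximal simple type to $K$ and use the intertwining of $(\tilde J,\tilde\Lambda)$. Two corrections there. First, the hereditary order $\mathfrak{A}$ attached to a maximal simple type need \emph{not} be a maximal order in $A$; it is principal, and in the totally ramified case it is the Iwahori order $\mathfrak{I}$, not $\M_p(\mathcal{O}_F)$. What is maximal is $\mathfrak{B}=\mathfrak{A}\cap B$ inside $B=Z_A(\beta)$. Your reduction ``after conjugation $\mathfrak{A}^\times\subset K$'' still works since $U_{\mathfrak{I}}\subset K$, so this is a misstatement rather than a fatal error, but it should be fixed. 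Second, for the Mackey computation you need to know not only that $I_G(\lambda)=\tilde J$ but also that $\tilde J\cap K=J$, which you assert without justification for the totally ramified case (where $\varpi_E\notin F^\times$); it does hold, but say why.

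The genuine gap is uniqueness, and you name it yourself: ``This is the delicate technical heart of the argument, and is where I expect almost all of the work to lie.'' You give no argument at all for it, only a description of the sort of comparison one would attempt. That is the bulk of Paskunas' theorem. Existence is comparatively straightforward exactly along the lines you outline; what required work in \cite{P} was showing that \emph{any} irreducible $K$-representation with the type property for $\mathfrak{I}(\pi)$ must coincide with the one you construct, and this cannot be waved through by ``comparing restrictions.'' Paskunas handles it by a genuinely nontrivial analysis (via the theory of types and computation of intertwining/Hecke algebras, together with an exhaustion argument ensuring no stray $K$-representation can cut out the Bernstein component $\mathfrak{I}(\pi)$). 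As it stands, your proposal proves existence (modulo the two corrections above) but does not prove the theorem.
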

 
 Denote by $X_{F}(G)$ the group of $F$-rational characters of $G$. Denote by $\parallel \cdot \parallel_{F}$ normalized absolute value \footnote{If $\varpi_{F}$ is a uniformizer in $F$ then we want $\|\varpi_{F}\|_F=|k_F|^{-1}$.} on $F$. Define \begin{center}
$^{\circ }G=\bigcap _{\phi\in X_{F}(G)} {\rm Ker}(\parallel \phi \parallel _{F})$.
\end{center} The following proposition will be a useful tool while describing cuspidal types in terms of orbits. 
\begin{proposition}[\cite{BK}, 5.4 Proposition]
\label{BKprop}
Let $J$ be an open compact mod $Z$ subgroup of $G$. Let $\pi$ be an irreducible cuspidal representation of $G$ of the form $\pi \cong \cInd _{J}^{G}\tau _{1}$ for some representation $\tau _{1}$ of $J$. Let $J^{\circ}=J\cap {}^{\circ}G$ and let $\tau $ be an irreducible component of $\tau _{1}\mid_{J^{\circ}}$. Then $J^{\circ}$ is the unique maximal compact subgroup of $J$ and $\tau $ is a cuspidal type on $J^{ \circ }$ for $\mathfrak{I}(\pi )$.
\end{proposition}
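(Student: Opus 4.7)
The plan is to handle the two assertions in sequence. For the first, the key observation is that any compact subgroup $H$ of $G$ lies in ${}^\circ G$: for each $\phi\in X_{F}(G)$, the image $\|\phi\|_{F}(H)$ is a compact subgroup of $\mathbb{R}_{>0}^{\times}$, hence trivial. Inside $J$ this forces every compact subgroup to sit in $J\cap{}^\circ G=J^\circ$. To finish, note $J^\circ$ is itself compact: it is closed in $J$ (since ${}^\circ G$ is the kernel of the continuous map $\|\det\|_F$), its image in the compact quotient $J/(J\cap Z)$ is thus compact, and its intersection with $Z$ equals $\mathcal{O}_F^{\times}\cdot I_p$, which is compact. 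Hence $J^\circ$ is the unique maximal compact subgroup of $J$.

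For the cuspidal type property I would verify the two implications of the definition. One direction is straightforward: an unramified character $\chi$ of $F^{\times}$ is trivial on $\mathcal{O}_F^{\times}$, so $(\chi\circ\det)|_{J^\circ}=1$. For $\pi_1\cong\pi\otimes\chi\circ\det\in\mathfrak{I}(\pi)$ this gives $\pi_1|_{J^\circ}\cong\pi|_{J^\circ}$, and since $\pi\cong\cInd_{J}^{G}\tau_1$ contains $\tau_1$ on $J$, it contains $\tau$ on $J^\circ$, so the same holds for $\pi_1$.

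For the converse, suppose $\pi_1$ is irreducible smooth with $\tau\subseteq\pi_1|_{J^\circ}$, and pick an irreducible $J$-subrepresentation $\tau_1'\subseteq\pi_1|_J$ whose restriction to $J^\circ$ contains $\tau$. Since $J^\circ\trianglelefteq J$ and $J/J^\circ$ embeds via $\nu_F\circ\det$ into $G/{}^\circ G\cong\mathbb{Z}$, the quotient $J/J^\circ$ is cyclic. Clifford theory applied to the pair of irreducible $J$-representations $\tau_1,\tau_1'$ — both containing $\tau$ on $J^\circ$ — then yields $\tau_1'\cong\tau_1\otimes\xi$ for some character $\xi$ of $J/J^\circ$, and any such $\xi$ lifts to $(\chi\circ\det)|_J$ for some unramified character $\chi$ of $F^{\times}$. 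By Frobenius reciprocity one gets a nonzero $G$-map
\[
\cInd_{J}^{G}\bigl(\tau_1\otimes(\chi\circ\det)|_J\bigr)\;\cong\;\bigl(\cInd_{J}^{G}\tau_1\bigr)\otimes\chi\circ\det\;\cong\;\pi\otimes\chi\circ\det\;\longrightarrow\;\pi_1,
\]
and irreducibility of both source and target forces $\pi_1\cong\pi\otimes\chi\circ\det$, whence $\mathfrak{I}(\pi_1)=\mathfrak{I}(\pi)$.

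The delicate step is the Clifford-theoretic matching in the converse, namely that every irreducible $J$-representation containing $\tau$ on $J^\circ$ is a twist $\tau_1\otimes\xi$. This depends on the specific structure of $(J,\tau_1)$ from Bushnell–Kutzko: that $\tau_1|_{J^\circ}$ is a single $J$-orbit of irreducibles and that extensions to the $J$-stabiliser of one representative are parametrised by characters of a cyclic quotient, all arising from unramified characters of $F^{\times}$ pulled back through $\det$.
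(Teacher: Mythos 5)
The paper does not prove this proposition; it is cited directly from Bushnell--Kutzko (``cf \cite{BK}, 5.4 Proposition'') and used as a black box, so there is no in-paper argument to compare against. Judged on its own terms, your proof is correct, and its structure (maximal compact subgroup via ${}^{\circ}G$, then the two implications in the definition of a cuspidal type, the converse via Clifford theory and a character twist) is the natural one.

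Two small remarks. First, in the compactness argument for $J^{\circ}$, you pass to ``its image in the compact quotient $J/(J\cap Z)$ is thus compact''; to see that the image is compact one should note that $J^{\circ}=J\cap{}^{\circ}G$ is \emph{open} in $J$ (because $J/J^{\circ}$ embeds in the discrete group $\mathbb{Z}$), so its image is an open, hence closed, subgroup of the compact quotient. With that word added the argument closes. Second, your final caveat --- that the Clifford-theoretic matching ``depends on the specific structure of $(J,\tau_{1})$ from Bushnell--Kutzko'' --- is overly modest: nothing BK-specific is needed. That $\tau_{1}\mid_{J^{\circ}}$ is a single $J$-orbit of irreducibles is just Clifford's theorem for the open normal compact $J^{\circ}\trianglelefteq J$ (cf.\ Proposition~\ref{Clifford2}); the stabiliser $J_{\tau}$ satisfies $J^{\circ}\subseteq J_{\tau}\subseteq J$ with $J_{\tau}/J^{\circ}$ a subgroup of $J/J^{\circ}\hookrightarrow\mathbb{Z}$, hence cyclic and of trivial $H^{2}(-,\mathbb{C}^{\times})$; $[J:J_{\tau}]$ is finite because irreducible smooth representations of $J$ are finite dimensional (Remark~\ref{Cliffordremark}); and characters of $J_{\tau}/J^{\circ}\cong m\mathbb{Z}$ extend to $J/J^{\circ}$ and further to unramified characters of $F^{\times}$ by divisibility of $\mathbb{C}^{\times}$. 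So the step you flag as delicate in fact holds for any compact-mod-$Z$ open $J$, and the proof is complete as written once the two implications and the projection formula $\cInd_{J}^{G}(\tau_{1}\otimes(\chi\circ\det)\mid_{J})\cong(\cInd_{J}^{G}\tau_{1})\otimes\chi\circ\det$ are assembled as you do.
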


\begin{remark}[\cite{BK}]\label{remarkBK}
Every irreducible cuspidal representation of $\GL_{p}(F)$ is isomorphic to one of the form as in Proposition \ref{BKprop}.
\end{remark} 
\begin{remark} 
Theorem \ref{Paskunas}, Proposition \ref{BKprop} and Remark \ref{remarkBK} do not use the assumption that $p$ is prime. 
\end{remark}  
\end{subsection}
\begin{subsection}{Hereditary orders}\label{chain1}
%Let $A= {\rm End}_{F}(V)$ where $V$ is of prime dimension $p$. 
In this section we recall basic notions associated to hereditary orders in $A$. The given description of principal orders relies on the fact that $V$ is of a prime dimension. In the general case things are more complicated. For more detailed discussion on hereditary orders we refer to (\cite{BK}, 1.1). We call a finitely generated $\mathcal{O}_{F}$-submodule of $V$ containing an $F$-basis of $V$ an \textit{$\mathcal{O}_{F}$-lattice} in $A$. An \textit{$\mathcal{O}_{F}$-order} in $A$ is an $\mathcal{O}_{F}$-lattice in $A$ which is also a subring of $A$ (with the same identity element). A sequence $\mathcal{L}=\lbrace L_{i}:\small i\in\mathbb{Z}\rbrace$ of $\mathcal{O}_{F}$-lattices satisfying the following conditions:\begin{enumerate}\item $L_{i+1}\varsubsetneq L_{i}$, $i \in \mathbb{Z}$
\item there exists $e \in \mathbb{Z}$ such that $\mathfrak{p}_{F}L_{i}=L_{i+e}$ for every $i \in \mathbb{Z}$
\end{enumerate} is called an \textit{$\mathcal{O}_{F}$-lattice chain} in $V$. The number $e=e(\mathcal{L})=:e(\mathfrak{A}(\mathcal{L}))$ is uniquely determined for a given $\mathcal{L}$ and is called the \textit{ $\mathcal{O}_{F}$-period} of $\mathcal{L}$. For $n \in \mathbb{Z}$ and an $\mathcal{O}_{F}$-lattice chain $\mathcal{L}$ define \begin{equation*}
{\rm End}_{\mathcal{O}_{F}}^{n}(\mathcal{L})=\lbrace g \in A: \small gL_{i}\subseteq L_{i+n}, \small i\in \mathbb{Z} \rbrace .
\end{equation*}
Taking $n=0$ we get ${\rm End}_{\mathcal{O}_{F}}^{0}(\mathcal{L}) =:\mathfrak{A} (\mathcal{L})=\mathfrak{A}$, which is an $\mathcal{O}_{F}$-order in $A$. This is a hereditary order \cite[1.1.]{BK}. A hereditary order $\mathfrak{A}(\mathcal{L})$ is called principal if ${\rm dim}_{k_{F}}(L_{i}/L_{i+1})={\rm dim}_{k_{F}}(L_{j}/L_{j+1})$ for every $i,j \in \mathbb{Z}$. Let $\mathcal{L}$ be an $\mathcal{O}_{F}$-lattice chain. Then ${\rm End}_{\mathcal{O}_{F}}^{1}$ is the Jacobson radical of $\mathfrak{A}(\mathcal{L})$. We denote it by $\mathfrak{P}_{\mathfrak{A}}$ or by $\mathfrak{P}$ if the order is clear from the context. It is an invertible fractional ideal and we have \begin{equation*}
\mathfrak{P}^{n}_{\mathfrak{A}}= \mathfrak{P}^{n}={\rm End}_{\mathcal{O}_{F}}^{n}(\mathcal{L}) \quad \text{for any } n \in \mathbb{Z}. 
\end{equation*}We also have $\mathfrak{p}_{F}\mathfrak{A}=\mathfrak{P} _{\mathfrak{A}} ^{e(\mathfrak{A})}$. We denote $U(\mathfrak{A})=U^{0}_{\mathfrak{A}}$ the group of invertible elements in $\mathfrak{A}$ and we define the subgroups \begin{equation*}
U^{n}_{\mathfrak{A}}=1+\mathfrak{P}_{\mathfrak{A}}^{n} \quad \text{for any } n\in \mathbb{N}, \small n\geqslant 1.
\end{equation*} We define the \textit{normalizer} of $\mathfrak{A}$ as \begin{equation*}
\mathfrak{K}(\mathfrak{A})=\lbrace g \in G: \small g \mathfrak{A} g^{-1}=\mathfrak{A} \rbrace  
\end{equation*} or equivalently if $ \mathfrak{A}= \mathfrak{A}(\mathcal{L} )$ for some $\mathcal{O}_{F}$-lattice chain $\mathcal{L}$ as \begin{equation*} \mathfrak{K}(\mathfrak{A})= \lbrace g \in G : \small gL \in \mathcal{L} \quad \text{for any} \quad L \in \mathcal{L} \rbrace . \end{equation*}  We now restrict our attention to principal orders. 
\begin{lemma}\label{principal} 
Any principal order in $A$ is $\GL_{p}(F)$-conjugate to $\mathfrak{M}=\M _{p}(\mathcal{O}_{F})$ or to the order $\mathfrak{I}$ which consists of matrices with coefficients in $\mathcal{O}_{F}$ which are upper triangular modulo $\mathfrak{p}_{F}$:\begin{equation*}\mathfrak{M}=\begin{pmatrix}\mathcal{O}_{F}& \cdots & \cdots  & \mathcal{O}_{F}\\
\vdots & \cdots & \cdots  & \vdots \\
\vdots & \cdots & \cdots & \vdots \\
\mathcal{O}_{F} & \cdots & \cdots & \mathcal{O}_{F}
\end{pmatrix} 
\quad \text{and} \quad 
\mathfrak{I}=\begin{pmatrix}
\mathcal{O}_{F} & \cdots & \cdots & \mathcal{O}_{F}\\
\mathfrak{p}_{F} & \ddots & \ddots & \vdots \\
\vdots &\ddots &\ddots & \vdots \\
\mathfrak{p}_{F} & \cdots &\mathfrak{p}_{F} & \mathcal{O}_{F}
\end{pmatrix}. \end{equation*}\end{lemma}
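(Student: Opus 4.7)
The plan is to exploit the primality of $p$ to pin down the shape of any $\mathcal{O}_F$-lattice chain whose endomorphism order is principal, and then to conjugate $\mathfrak{A}$ to one of the two standard forms by choosing an $\mathcal{O}_F$-basis of $V$ adapted to that chain.

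First I would write $\mathfrak{A}=\mathfrak{A}(\mathcal{L})$ for some $\mathcal{O}_F$-lattice chain $\mathcal{L}=(L_i)_{i\in\mathbb{Z}}$ with $\mathcal{O}_F$-period $e$. Setting $d_i:=\dim_{k_F}(L_i/L_{i+1})$, the condition $\mathfrak{p}_F L_i=L_{i+e}$ together with $\dim_F V=p$ yields
\[
d_0+d_1+\cdots+d_{e-1}=p.
\]
Principality of $\mathfrak{A}$ is exactly the statement that the $d_i$ are all equal to a common value $d$, so $ed=p$. Since $p$ is prime, the only possibilities are $(e,d)=(1,p)$ and $(e,d)=(p,1)$.

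In the case $(e,d)=(1,p)$ the chain is $L_i=\mathfrak{p}_F^{i}L_0$, so it is determined by the single lattice $L_0$; picking any $\mathcal{O}_F$-basis of $L_0$ identifies $V$ with $F^p$ and conjugates $\mathfrak{A}$ to $\mathfrak{M}=\mathrm{M}_p(\mathcal{O}_F)$. In the case $(e,d)=(p,1)$ there are $p$ distinct lattices interpolating between $L_0$ and $\mathfrak{p}_F L_0$, each successive quotient being one-dimensional over $k_F$. For $j=1,\dots,p$ pick $v_j\in L_{p-j}\setminus L_{p-j+1}$. The reductions $\bar v_j$ form a full flag in $L_0/\mathfrak{p}_F L_0$, so they are $k_F$-linearly independent; by Nakayama's lemma $(v_1,\dots,v_p)$ is an $\mathcal{O}_F$-basis of $L_0$, and unwinding the construction shows
\[
L_i=\bigoplus_{j\leq p-i}\mathcal{O}_F v_j\ \oplus\ \bigoplus_{j>p-i}\mathfrak{p}_F v_j\qquad(0\leq i\leq p).
\]
A column-by-column check of the condition $g L_i\subseteq L_i$ for all $i$ then shows that the matrix of $g$ in the basis $(v_j)$ has $(k,j)$-entry in $\mathcal{O}_F$ for $k\leq j$ and in $\mathfrak{p}_F$ for $k>j$, i.e.\ it is upper triangular modulo $\mathfrak{p}_F$. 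Thus the corresponding change of basis conjugates $\mathfrak{A}$ to $\mathfrak{I}$.

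The only mildly delicate step is producing the adapted basis $(v_j)$ realising the chain as the standard one for $\mathfrak{I}$; this is where the primality hypothesis enters, through the factorisation $ed=p$, and where Nakayama is used. Everything else is a routine verification with lattice chains, and I would not expect a genuine obstacle.
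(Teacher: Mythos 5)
Your proposal is correct and follows essentially the same route as the paper: in both, principality plus $\dim_F V = p$ prime forces the $\mathcal{O}_F$-period $e(\mathcal{L})$ to be $1$ or $p$, and one then conjugates the lattice chain to the standard one to identify $\mathfrak{A}$ with $\mathfrak{M}$ or $\mathfrak{I}$. The only difference is cosmetic: the paper invokes the existence of an $\mathcal{O}_F$-basis adapted to $\mathcal{L}$ from Bushnell--Kutzko and then observes $\mathcal{L}\subseteq\mathcal{L}_{\mathrm{max}}$, whereas you construct the adapted basis directly from a full flag in $L_0/\mathfrak{p}_F L_0$ via Nakayama, making the argument self-contained.
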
\begin{proof} The proof is based on the notion of an $\mathcal{O}_{F}$-basis of an $\mathcal{O}_{F}$-lattice chain. Let $\mathcal{L}=\lbrace L_{i}: \small i \in \mathbb{Z} \rbrace$ be an $\mathcal{O}_{F}$-lattice chain in $V$. An \textit{ $\mathcal{O}_{F}$-basis} of $\mathcal{L}$ is an $F$-basis $\lbrace v_{1}, \ldots , v_{p} \rbrace$ of $V$ such that it is an $\mathcal{O}_{F}$-basis of some $L_{j} \in \mathcal{L}$ and for every $i\in \mathbb Z$ there are (uniquely determined) integers $f(i,1)\leqslant f(i,2) \leqslant \ldots \leqslant  f(i,p)$ such that \[ L_{i}=\sum _{l=1}^{p} \mathfrak{p}_{F}^{f(i,l)}v_{l}.\]
Any $\mathcal{O}_{F}$-lattice chain has an $\mathcal{O}_{F}$-basis (\cite{BK}, 1.1). 

Let $\mathfrak{A}=\mathfrak{A}(\mathcal{L})$ be a principal order for an $\mathcal{O}_{F}$-lattice chain $\mathcal{L}= \lbrace L_{i}: \small i \in \mathbb{Z} \rbrace $. We want to show that $\mathfrak{A}$ is $\GL_{p}(F)$-conjugate to $\mathfrak{M}$ or $\mathfrak{I}$.  Let $\lbrace v_{1}, \ldots , v_{p} \rbrace$ be an $\mathcal{O}_{F}$-basis of $\mathcal{L}$. We use this basis to identify $A$ with $\M _{p}(F)$. Let $\mathcal{L}_{\mathrm{max}}$ be the $\mathcal{O}_{F}$-lattice chain formed by $\mathcal{O}_{F}$-lattices of the form \begin{equation*} \mathfrak{p}_{F}^{j}( \mathcal{O}_{F}v_{1}+ \ldots + \mathcal{O}_{F}v_{l} + \mathfrak{p}_{F}v_{l+1}+ \ldots +\mathfrak{p}_{F}v_{p}) \end{equation*} where $1 \leqslant l \leqslant p  $ and $j \in \mathbb{Z}$. The $\mathcal{O}_{F}$-lattice chain $\mathcal{L}$ is contained in $\mathcal{L}_{\mathrm{max}}$ (see \cite[1.1]{BK}). Since $\mathcal{L}$ is principal, ${\rm dim}_{k_{F}}(L_{i}/L_{i+1})= {\rm dim} _{k_{F}}(L_{l} /L_{l+1})$ for any $i,l \in \mathbb{Z}$. We have \[e(\mathcal L){\rm dim} _{k_{F}}(L_{1} /L_{2})=\sum _{i=1}^{e(\mathcal{L})} {\rm dim}_{k_{F}}(L_{i}/L_{i+1})={\rm dim}_{k_F}(L_1/\frak p_F L_1)=p.\] Therefore $e(\mathcal{L})=1 $ or $p$. %We will show that $\mathfrak{A}(\mathcal{L})$ is respectively $\mathfrak{M}$ or $\mathfrak{I}$. 
If $e(\mathcal{L})=1$, then $\mathcal{L}$ consists of $\mathcal{O}_{F}$-lattices of the form $\mathfrak{p}_{F}^{j}(\mathcal{O}_{F} v_{1}+ \ldots + \mathcal{O}_{F}v_{p})$ for $j \in \mathbb{Z}$ and $\mathfrak{A}=\mathfrak{M}$. If $e(\mathcal{L})=p$, then $\mathcal{L}=\mathcal{L}_{\mathrm{max}}$ and $\mathfrak{A}(\mathcal{L})=\mathfrak{I}$.                                 
\end{proof}

Denote \begin{equation*}\Pi_{\mathfrak{M}}=\varpi _{F} {\rm Id} _{p \times p} \quad \text{and} \quad \Pi_{\mathfrak{I}}=\begin{pmatrix}0&1&0& \cdots &0\\
\vdots & \ddots & \ddots & \ddots & \vdots \\
\vdots & \ddots & \ddots & \ddots & \vdots \\
0 & \ddots & \ddots  & \ddots & 1 \\
\varpi _{F}& 0& \cdots & \cdots &0
\end{pmatrix} \end{equation*} where ${\rm Id} _{p \times p}$ denotes the identity matrix of size $p \times p$. One can verify that 
$\mathfrak{P}_{\mathfrak{M}}=\Pi _{\mathfrak{M}} \mathfrak{M}= \mathfrak{M} \Pi _{ \mathfrak{M}} $ and $\mathfrak{P}_{\mathfrak{I}}= \Pi _{\mathfrak{I}} \mathfrak{I}= \mathfrak{I} \Pi _{\mathfrak{I}}$. \begin{corollary}\label{primeelement}  
For a principal order $\mathfrak{A}$ there exists an element $\Pi _{\mathfrak{A}}$ such that $\mathfrak{P}_{\mathfrak{A}}=\Pi _{\mathfrak{A}} \mathfrak{A} =  \mathfrak{A} \Pi _{\mathfrak{A}}$.
\end{corollary}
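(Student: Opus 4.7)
The plan is to reduce to the two explicit principal orders given by Lemma \ref{principal}, and then exhibit an element $a$ in each case, checking directly that the desired two-sided equality holds. Conjugation will then transport the construction to an arbitrary principal order.

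\medskip

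First I would invoke Lemma \ref{principal} to write $\mathfrak{A} = g \mathfrak{A}_{0} g^{-1}$ for some $g \in \GL_{p}(F)$, where $\mathfrak{A}_{0} \in \{\mathfrak{M}, \mathfrak{I}\}$. Since the Jacobson radical is intrinsic to the ring, $\mathfrak{P}_{\mathfrak{A}} = g \mathfrak{P}_{\mathfrak{A}_{0}} g^{-1}$. Thus it suffices to find an element $a_{0}$ with $a_{0} \mathfrak{A}_{0} = \mathfrak{A}_{0} a_{0} = \mathfrak{P}_{\mathfrak{A}_{0}}$; then $a = g a_{0} g^{-1}$ works for $\mathfrak{A}$.

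\medskip

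For $\mathfrak{A}_{0} = \mathfrak{M}$ I take $a_{0} = \Pi_{\mathfrak{M}} = \varpi_{F}\,\mathrm{Id}_{p \times p}$. This is central in $A$, so $\Pi_{\mathfrak{M}} \mathfrak{M} = \mathfrak{M} \Pi_{\mathfrak{M}} = \varpi_{F} \mathfrak{M} = \mathfrak{p}_{F}\mathfrak{M}$, and the latter coincides with $\mathfrak{P}_{\mathfrak{M}}$ (for $\mathfrak{M}$ we have $e(\mathfrak{M}) = 1$, so $\mathfrak{p}_{F}\mathfrak{M} = \mathfrak{P}_{\mathfrak{M}}^{e(\mathfrak{M})} = \mathfrak{P}_{\mathfrak{M}}$). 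For $\mathfrak{A}_{0} = \mathfrak{I}$ I take $a_{0} = \Pi_{\mathfrak{I}}$ as defined in the text. A direct computation shows that left multiplication by $\Pi_{\mathfrak{I}}$ on a matrix in $\mathfrak{I}$ cyclically permutes the rows and multiplies the last row by $\varpi_{F}$; the resulting matrix lies in $\mathfrak{P}_{\mathfrak{I}}$ (the strictly upper-triangular-mod-$\mathfrak{p}_{F}$ matrices), and every such matrix is obtained this way. The analogous computation for right multiplication (which cyclically permutes the columns and multiplies the first column by $\varpi_{F}$) gives $\mathfrak{I} \Pi_{\mathfrak{I}} = \mathfrak{P}_{\mathfrak{I}}$. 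Hence $\Pi_{\mathfrak{I}}\mathfrak{I} = \mathfrak{I}\Pi_{\mathfrak{I}} = \mathfrak{P}_{\mathfrak{I}}$.

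\medskip

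The only substantive step is this verification for $\mathfrak{I}$, since the $\mathfrak{M}$ case is trivial via centrality. Everything else is a pure transport-of-structure argument, so the corollary follows immediately once one has described $\mathfrak{P}_{\mathfrak{I}}$ explicitly as the set of matrices in $\mathfrak{I}$ that are strictly upper triangular modulo $\mathfrak{p}_{F}$ (equivalently, have zero diagonal modulo $\mathfrak{p}_{F}$) and checked that $\Pi_{\mathfrak{I}}$ realizes this shift.
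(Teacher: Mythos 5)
Your proof is correct and takes the same route as the paper: reduce via Lemma \ref{principal} to the two explicit principal orders $\mathfrak{M}$ and $\mathfrak{I}$, then verify directly that $\Pi_{\mathfrak{M}}$ and $\Pi_{\mathfrak{I}}$ give the two-sided equality. You merely spell out the conjugation transport and the row/column-shift computation that the paper leaves as a "simple computation."
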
\begin{proof} By Lemma \ref{principal} it is enough to check the statement for $\mathfrak{M}$ and $\mathfrak{I}$. %By simple computation we see that taking $\Pi _{\mathfrak{A}}= \Pi _{\mathfrak{M}}$ for $\mathfrak{M}$ and $\Pi _{\mathfrak{A}}=\Pi_{\mathfrak{I}}$ for $\mathfrak{I}$ we obtain the desired equalities.
\end{proof}We call an element $\Pi _{\mathfrak{A}}$ from Corollary \ref{primeelement} a prime element in $\mathfrak{A} $. %For a principal order we can deduce a more specific form of the normalizer:
The normalizer $\mathfrak{K}( \mathfrak{A})$ is an open compact modulo center subgroup of $G$ (see \cite{BK}, section 1.1). Define \begin{equation*} \nu _{\mathfrak{A}}(a):= {\rm max} \lbrace n \in \mathbb{Z}: \small a\in \mathfrak{P}_{\mathfrak{A}}^{n} \rbrace ,\quad\quad a\in A \end{equation*} and let $\nu _{\mathfrak{A}}(0)=\infty$.  
 
 \begin{lemma}\label{normalizer}
 Let $\mathfrak{A}$ be a principal order. Then, $\mathfrak{K}(\mathfrak{A})=U_{\mathfrak{A}}\rtimes\langle \Pi _{\mathfrak{A}} \rangle$.
 \end{lemma}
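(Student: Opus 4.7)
The plan is to use the alternative description of $\mathfrak{K}(\mathfrak{A})$ in terms of the lattice chain, namely $\mathfrak{K}(\mathfrak{A}) = \{g \in G : gL \in \mathcal{L} \text{ for every } L \in \mathcal{L}\}$, where $\mathcal{L}$ is the $\mathcal{O}_F$-lattice chain with $\mathfrak{A} = \mathfrak{A}(\mathcal{L})$. The containment $U_{\mathfrak{A}} \subseteq \mathfrak{K}(\mathfrak{A})$ is immediate from the definition. For $\Pi_{\mathfrak{A}} \in \mathfrak{K}(\mathfrak{A})$, one uses $\Pi_{\mathfrak{A}} \mathfrak{A} = \mathfrak{A} \Pi_{\mathfrak{A}}$ (Corollary \ref{primeelement}) and the fact that $\Pi_{\mathfrak{A}}$ is invertible in $G$, so conjugation by $\Pi_{\mathfrak{A}}$ preserves $\mathfrak{A}$.

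For the reverse inclusion, I would first observe that by Lemma \ref{principal} it suffices to treat the normal forms $\mathfrak{A} = \mathfrak{M}$ and $\mathfrak{A} = \mathfrak{I}$, and in both cases one checks directly from the explicit matrices that $\Pi_{\mathfrak{A}}$ shifts the lattice chain by exactly one step: $\Pi_{\mathfrak{A}} L_i = L_{i+1}$ for every $i \in \mathbb{Z}$. Now take any $g \in \mathfrak{K}(\mathfrak{A})$; then $g$ restricts to a bijection $\mathcal{L} \to \mathcal{L}$ preserving strict inclusions, and so must act as a shift $gL_i = L_{i+n}$ for some $n \in \mathbb{Z}$ independent of $i$. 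The element $h := \Pi_{\mathfrak{A}}^{-n} g$ then satisfies $hL_i = L_i$ for every $i$; since both $h$ and $h^{-1}$ preserve every lattice, they lie in $\mathfrak{A}$, so $h \in U_{\mathfrak{A}}$. This gives $g \in \langle \Pi_{\mathfrak{A}} \rangle U_{\mathfrak{A}} = U_{\mathfrak{A}} \langle \Pi_{\mathfrak{A}} \rangle$.

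It remains to check the semidirect product structure. Normality of $U_{\mathfrak{A}}$ in $\mathfrak{K}(\mathfrak{A})$ is immediate: if $g \in \mathfrak{K}(\mathfrak{A})$ and $u \in U_{\mathfrak{A}} = \mathfrak{A}^{\times}$, then $gug^{-1} \in g\mathfrak{A}g^{-1} = \mathfrak{A}$ and is invertible, hence in $U_{\mathfrak{A}}$. For triviality of the intersection $U_{\mathfrak{A}} \cap \langle \Pi_{\mathfrak{A}} \rangle$, any power $\Pi_{\mathfrak{A}}^n$ shifts $\mathcal{L}$ by $n$ and thus belongs to $U_{\mathfrak{A}}$ (which stabilizes every lattice) only when $n = 0$.

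The only real point where care is needed is showing that $\Pi_{\mathfrak{A}}$ shifts the lattice chain by exactly one step; I expect to dispatch this by an explicit computation in each of the two cases $\mathfrak{M}$ and $\mathfrak{I}$ afforded by Lemma \ref{principal}, using the specific matrices $\Pi_{\mathfrak{M}} = \varpi_F \operatorname{Id}$ and the companion-type $\Pi_{\mathfrak{I}}$. Everything else reduces to straightforward manipulations with lattice chains and the identification $\mathfrak{A} = \{g \in A : gL_i \subseteq L_i \text{ for all } i\}$.
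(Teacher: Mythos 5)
Your proof is correct, but it takes a different route from the paper for the reverse inclusion $\mathfrak{K}(\mathfrak{A}) \subseteq U_{\mathfrak{A}}\langle\Pi_{\mathfrak{A}}\rangle$. The paper argues abstractly: it observes that $U_{\mathfrak{A}}\langle\Pi_{\mathfrak{A}}\rangle$ is a compact-mod-center subgroup of $G$ containing the center, cites that it is \emph{maximal} among such subgroups, and uses that $\mathfrak{K}(\mathfrak{A})$ is itself compact mod center (a fact recorded just after the lemma, from \cite{BK} 1.1) to force equality. You instead work directly with the lattice-chain description $\mathfrak{K}(\mathfrak{A}) = \{g : gL \in \mathcal{L} \ \forall L\in\mathcal{L}\}$: any $g \in \mathfrak{K}(\mathfrak{A})$ induces an order-preserving bijection of $\mathcal{L} \cong \mathbb{Z}$, hence a shift by some $n$, and after correcting by $\Pi_{\mathfrak{A}}^{-n}$ you land in $U_{\mathfrak{A}}$. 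Your argument is more elementary and self-contained — it avoids importing the maximality statement, which is itself nontrivial — at the cost of having to verify that $\Pi_{\mathfrak{A}}$ shifts $\mathcal{L}$ by exactly one step. You propose to check this explicitly for $\mathfrak{M}$ and $\mathfrak{I}$, which is fine; it can also be seen abstractly by noting $\Pi_{\mathfrak{A}}^{e(\mathfrak{A})} \in \varpi_F U_{\mathfrak{A}}$ shifts by $e(\mathfrak{A})$, so if $\Pi_{\mathfrak{A}}$ shifts by $c \geq 1$ then $c\,e(\mathfrak{A}) = e(\mathfrak{A})$, forcing $c = 1$ — this would let you bypass the normal-form reduction entirely. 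Your treatment of normality of $U_{\mathfrak{A}}$ (directly from $g\mathfrak{A}g^{-1} = \mathfrak{A}$) is also more transparent than the paper's, which cites \cite{BK} again. Both proofs are valid; yours trades one cited black box for a short hands-on lattice computation.
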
  
 \begin{proof}
Without loss of generality we can assume that $\mathfrak A=\mathfrak M$ or $\mathfrak I$. The subgroup $U_\mathfrak A$ is always normal in $\mathfrak K(\mathfrak A)$. The quotient $\mathfrak K(\mathfrak A)/U_\mathfrak A$ is identified with $\mathbb Z$ via the map $\nu_\mathfrak A$, under which $\langle \Pi_\mathfrak A\rangle$ is mapped bijectively to $\mathbb Z$.
% By definition $U_{\mathfrak{A}}$ is contained in $\mathfrak{K}(\mathfrak{A})$. Since $\Pi_{\mathfrak{A}}\mathfrak{A}=\mathfrak{P}_{\mathfrak{A}}=\mathfrak{A}\Pi _{\mathfrak{A}}$, the subgroup generated by $\Pi _{\mathfrak{A}}$ is contained in $\mathfrak{K}(\mathfrak{A})$. Therefore the group generated by $U_{\mathfrak{A}}$ and $\Pi _{\mathfrak{A}}$ is contained in $\mathfrak{K}(\mathfrak{A})$. On the other hand the group $U_{\mathfrak{A}}\langle \Pi _{\mathfrak{A}} \rangle$ contains the center. It is compact modulo center and it is a maximal subgroup of $G$ with this property. Therefore $\mathfrak{K}(\mathfrak{A})$ is generated by $U_{\mathfrak{A}}$ and $\Pi_{\mathfrak{A}}$. By (\cite {BK}, section 1.1), the subgroup $U_{\mathfrak{A}}$ is normal in $\mathfrak{K}(\mathfrak{A})$. The intersection $U_{\mathfrak{A}}\cap \langle \Pi _{\mathfrak{A}}\rangle$ is trivial. 
 \end{proof} 

\end{subsection}\begin{subsection}{Simple strata}\label{simplestrata} 

A simple stratum is a notion used in the classification of irreducible cuspidal representations of $\GL_{p}(F)$ in \cite{BK}. We focus on simple strata which come from principal orders as these are the ones relevant in the classification of irreducible cuspidal representations of $\GL_{p}(F)$ for $p$ prime. Again, the given properties rely on the fact that the dimension of $V$ is prime. We use a definition (Definition \ref{definitionsimplestratum}) of a simple stratum which is not a standard one (comes from \cite{BKnotes}) but we prove that in the relevant cases it is equivalent with the one used in \cite{BK}. The goal of this subsection is to prove Proposition \ref{Propositionsimplestratum}. 
\begin{definition}A $4$-tuple $[\mathfrak{A},n,r,\beta ]$ is called a stratum in $A$ if $\mathfrak{A}$ is a hereditary $\mathcal{O}_{F}$-order in $A$, $n, r$ are integers such that $n>r$ and $\beta \in A$ is such that $\nu_{\mathfrak{A}}(\beta )\geqslant -n$. 
\end{definition} 
We say that two strata $[ \mathfrak{A}_{1}, n_{1}, r_{1}, \beta _{1} ]$ and $[ \mathfrak{A}_{2}, n_{2}, r_{2}, \beta _{2} ]$ are equivalent if \begin{equation*} \beta _{1}+ \mathfrak{P}_{1}^{-r_{1}}= \beta _{2}+ \mathfrak{P}_{2}^{-r_{2}} \end{equation*} where $\mathfrak{P}_{1}$ (resp. $\mathfrak{P}_{2}$) is the Jacobson radical of $\mathfrak{A}_{1}$ (resp. $\mathfrak{A}_{2}$). We denote it by $[\mathfrak{A}_1, n_{1},r_{1}, \beta_1]\sim [\mathfrak{A}_2,n_2, r_2, \beta _2]$. If $n>r \geqslant \lfloor \frac{n}{2} \rfloor \geqslant 0$, then we can associate with a stratum $[ \mathfrak{A} , n,r, \beta ]$ a character $\psi _{\beta }: U_{\mathfrak{A}}^{r+1} \rightarrow \mathbb{C}^{ \times } $ which is trivial on $U_{\mathfrak{A}}^{n+1}$ and defined by $\psi _{\beta  } (x)= \psi (\tr ( \beta  (x-1)))$. We say that an irreducible smooth representation $ \pi $ of $\GL_{p}(F)$ contains a stratum $ [\mathfrak{A} , n, r, \alpha ]$ if $ \pi $ contains the character $ \psi _{ \alpha }$ of $U_{\mathfrak{A}}^{r+1}$. We define the normalized level of an irreducible smooth representation $\pi $ as \begin{align*} l( \pi ) := {\rm min} \bigg\lbrace \frac{n}{e( \mathfrak{A})}: \mathfrak{A} \textrm{ is a hereditary order, } n \in \mathbb{N}, \large n \geqslant 0 \\ \textrm{and }  \pi \textrm{ contains a trivial character of }  U_{\mathfrak{A}}^{n+1} \bigg\rbrace . \end{align*} 
We say that a stratum $[\mathfrak{A},n,n-1, \beta ]$ is fundamental if $\beta + \mathfrak{P}_{ \mathfrak{A}}^{1-n}$ does not contain a nilpotent element of $A$. We say that two strata $[ \mathfrak{A} _{1}, n_{1}, r_{1}, \beta _{1} ]$ and $[ \mathfrak{A}_{2}, n_{2}, r_{2}, \beta _{2} ]$ intertwine in $G$ if there exists $x \in G$ such that $x (\beta _{2} + \mathfrak{P}_{ \mathfrak{A}_{2}}^{-r_{2}})x^{-1} \cap (\beta _{1}+ \mathfrak{P}_{ \mathfrak{A}_{1}}^{-r_{1}}) \neq \emptyset$. 

Let $H_{1}, H_{2}$ be two compact open subgroups of $G$ and let $\pi _{1}$ (resp. $\pi _{2}$) be an irreducible smooth representation of $H_{1}$ (rep. $H_{2}$). Take $g \in G$. Write $H_{1}^{g}:= g^{-1}H_{1}g$. Define $\pi _{1}^{g}$ to be a representation of $H_{1} ^{g}$ such that $\pi _{1}^{g}(h)=\pi _{1} (ghg^{-1})$ for any $h \in H_{1}^{g}$. We say that  $g \in G$ intertwines $\pi _{1}$ with $\pi _{2}$ if $ {\rm Hom}_{H_{1}^{g} \cap H_{2}}(\pi _{1}^{g}, \pi _{2}) \neq 0$.   

\begin{lemma}{(see \cite[11.1 Proposition 1]{BH} and \cite[Lemma 1.13.5 ]{Alm})  }\label{fundamentalintertwine} Let $\pi $ be an irreducible cuspidal representation of $G$. Let $\mathfrak{A}_{1}$, $\mathfrak{A}_{2}$ be principal orders and let $[ \mathfrak{A}_{1}, n_{1}, n_{1}-1, \beta _{1} ]$ and $[ \mathfrak{A}_{2}, n_{2}, n_{2}-1, \beta _{2} ]$ be two strata contained in $\pi $. Then they intertwine. \end{lemma}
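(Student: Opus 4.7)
The plan is to encode stratum containment as the existence of an eigenvector in $\pi$, build a bi-equivariant matrix coefficient from two such eigenvectors, and then exploit the compact support modulo $Z$ of matrix coefficients of cuspidal representations to force the intertwining condition.

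First, the hypothesis that $\pi$ contains $[\mathfrak{A}_{i},n_{i},n_{i}-1,\beta_{i}]$ means that the character $\psi_{\beta_{i}}$ of $U_{\mathfrak{A}_{i}}^{n_{i}}$ occurs in $\pi\mid_{U_{\mathfrak{A}_{i}}^{n_{i}}}$. Frobenius reciprocity yields a non-zero $v_{i}\in V_{\pi}$ with $\pi(u)v_{i}=\psi_{\beta_{i}}(u)v_{i}$ for $u\in U_{\mathfrak{A}_{i}}^{n_{i}}$, and dually (projecting a smooth vector of $\tilde\pi$ onto the $\psi_{\beta_{i}}^{-1}$-isotypic subspace) a vector $\tilde v_{i}\in V_{\tilde\pi}$ satisfying $\tilde\pi(u)\tilde v_{i}=\psi_{\beta_{i}}(u)^{-1}\tilde v_{i}$. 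Form the matrix coefficient $\varphi(g):=\langle\pi(g)v_{1},\tilde v_{2}\rangle$; a direct computation gives the bi-equivariance
\begin{equation*}
\varphi(u_{2}gu_{1})=\psi_{\beta_{2}}(u_{2})\,\psi_{\beta_{1}}(u_{1})\,\varphi(g),\qquad u_{1}\in U_{\mathfrak{A}_{1}}^{n_{1}},\; u_{2}\in U_{\mathfrak{A}_{2}}^{n_{2}}.
\end{equation*}
Since $\pi$ is irreducible, the $G$-translates of $v_{1}$ span $V_{\pi}$, so some $\pi(g_{0})v_{1}$ has non-zero image under the projector onto the $\psi_{\beta_{2}}$-isotypic component of $V_{\pi}\mid_{U_{\mathfrak{A}_{2}}^{n_{2}}}$; choosing $\tilde v_{2}$ to pair non-trivially with that image makes $\varphi(g_{0})\neq 0$. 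Because $\pi$ is cuspidal, $\varphi$ also has compact support modulo $Z$, so its support is genuinely non-empty.

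Fix any $g_{0}$ with $\varphi(g_{0})\neq 0$. For $u_{2}\in U_{\mathfrak{A}_{2}}^{n_{2}}\cap g_{0}U_{\mathfrak{A}_{1}}^{n_{1}}g_{0}^{-1}$, the identity $u_{2}g_{0}=g_{0}\cdot(g_{0}^{-1}u_{2}g_{0})$ together with the bi-equivariance forces
\begin{equation*}
\psi_{\beta_{2}}(u_{2})\,\varphi(g_{0})=\varphi(u_{2}g_{0})=\psi_{\beta_{1}}(g_{0}^{-1}u_{2}g_{0})\,\varphi(g_{0}),
\end{equation*}
so the characters $\psi_{\beta_{2}}$ and $u\mapsto\psi_{\beta_{1}}(g_{0}^{-1}ug_{0})$ agree on the intersection. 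Expanding $\psi_{\beta}(1+y)=\psi(\tr(\beta y))$ and using the trace-pairing duality $(a,b)\mapsto\psi(\tr(ab))$, which identifies the Pontryagin dual of $U_{\mathfrak{A}}^{n}/U_{\mathfrak{A}}^{n+1}$ with $\mathfrak{P}_{\mathfrak{A}}^{-n}/\mathfrak{P}_{\mathfrak{A}}^{1-n}$, this coincidence of characters converts into
\begin{equation*}
g_{0}\beta_{1}g_{0}^{-1}-\beta_{2}\in g_{0}\mathfrak{P}_{\mathfrak{A}_{1}}^{1-n_{1}}g_{0}^{-1}+\mathfrak{P}_{\mathfrak{A}_{2}}^{1-n_{2}},
\end{equation*}
equivalently $g_{0}(\beta_{1}+\mathfrak{P}_{\mathfrak{A}_{1}}^{1-n_{1}})g_{0}^{-1}\cap(\beta_{2}+\mathfrak{P}_{\mathfrak{A}_{2}}^{1-n_{2}})\neq\emptyset$, which is the required intertwining of the strata.

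The main obstacle is precisely this last duality step: since $\mathfrak{A}_{1}$ and $\mathfrak{A}_{2}$ are principal but not necessarily conjugate, their filtrations live inside different lattice chains, and one has to verify uniformly that the trace-pairing annihilator computation works for both. Lemma \ref{principal} reduces this to the two explicit cases $\mathfrak{M}$ and $\mathfrak{I}$, in which the annihilator of $U_{\mathfrak{A}}^{n}/U_{\mathfrak{A}}^{n+1}$ under $(a,b)\mapsto\psi(\tr(ab))$ can be written out by hand and seen to coincide with $\mathfrak{P}_{\mathfrak{A}}^{1-n}$, completing the translation.
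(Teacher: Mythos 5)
The paper does not prove this lemma at all; it quotes it from \cite{BH} and \cite{Alm}, so there is no internal proof to match against. Your reconstruction follows the standard two-step route that the cited references use (intertwining of the characters $\psi_{\beta_i}$ via a matrix-coefficient argument, then translation to intertwining of strata via the trace pairing), and the first step is essentially correct. One small wart: the sentence ``because $\pi$ is cuspidal, $\varphi$ has compact support modulo $Z$, so its support is non-empty'' is a non sequitur --- compact support says nothing about non-emptiness, and indeed this lemma does not use cuspidality at all; the non-vanishing of $\varphi$ comes entirely from irreducibility, as you already argue in the preceding sentence.

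The genuine gap is in the duality step. You cite the fact that the trace pairing identifies the Pontryagin dual of $U_{\mathfrak{A}}^{n}/U_{\mathfrak{A}}^{n+1}$ with $\mathfrak{P}_{\mathfrak{A}}^{-n}/\mathfrak{P}_{\mathfrak{A}}^{1-n}$, but that is not the duality the argument needs. What you actually have is that $\psi_{g_{0}\beta_{1}g_{0}^{-1}-\beta_{2}}$ is trivial on $1+L$ where $L=\mathfrak{P}_{\mathfrak{A}_{2}}^{n_{2}}\cap g_{0}\mathfrak{P}_{\mathfrak{A}_{1}}^{n_{1}}g_{0}^{-1}$, i.e.\ that $g_{0}\beta_{1}g_{0}^{-1}-\beta_{2}$ lies in the annihilator $L^{*}$ under the trace form. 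To conclude the membership $g_{0}\beta_{1}g_{0}^{-1}-\beta_{2}\in g_{0}\mathfrak{P}_{\mathfrak{A}_{1}}^{1-n_{1}}g_{0}^{-1}+\mathfrak{P}_{\mathfrak{A}_{2}}^{1-n_{2}}$ you need \emph{two} inputs: the self-duality statement $(\mathfrak{P}_{\mathfrak{A}}^{n})^{*}=\mathfrak{P}_{\mathfrak{A}}^{1-n}$ for each order (and its stability under conjugation), \emph{and} the lattice-duality identity $(L_{1}\cap L_{2})^{*}=L_{1}^{*}+L_{2}^{*}$, which follows from $L^{**}=L$ together with $(L_{1}+L_{2})^{*}=L_{1}^{*}\cap L_{2}^{*}$. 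The second identity is the crux of ``intertwining of characters implies intertwining of strata,'' and your write-up omits it entirely. Your worry about ``different lattice chains'' is misplaced --- the self-duality formula is conjugation-invariant, so checking it on $\mathfrak{M}$ and $\mathfrak{I}$ via Lemma \ref{principal} does suffice for that part --- but the intersection-versus-sum step is what actually requires an argument.
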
 

In order to introduce the notion of a simple stratum we first recall the definition of a minimal element over $F$. 
 \begin{definition}Let $E/F$ be a finite field extension with $E=F[\beta]$. We say that $\beta $ is minimal over $F$ if the following is satisfied:
\begin{itemize}
\item $gcd(\nu _{E}(\beta), e(E/F))=1$ and
\item $\varpi_{F}^{-\nu _{E}(\beta)}\beta^{e(E/F)}+\mathfrak{p}_{E}$ generates the extension of the residue fields $k_{E}/k_{F}$.
\end{itemize}
\end{definition}

\begin{definition}{(\cite[7.11]{BKnotes})} \label{definitionsimplestratum}  A stratum $[\mathfrak{A}, n, n-1, \beta ]$ is called simple if \begin{enumerate} \item  $E=F[ \beta ]$ is a field
\item $\beta \mathfrak{A} =\mathfrak{P}_{\mathfrak{A}}^{-n}$ 
\item $ \beta $ is minimal over $F$ \end{enumerate} \end{definition} 
Let $\pi $ be an irreducible cuspidal representation of $G$ which contains a simple stratum $[\mathfrak{A},n, n-1, \beta ]$. Since we consider $\GL_{p}(F)$ with $p$ prime there are only two possibilities for the degree $[F[\beta ]:F]$. Namely it is $1$ or $p$.
\begin{lemma}{(cf. \cite[1.5.6 Exercise]{BK})}\label{normalizercontainsE} Let $[ \mathfrak{A} , n, n-1, \beta ]$ be a simple stratum with $\mathfrak{A}=\mathfrak{M}$ or $\mathfrak{I}$. Denote $E=F [ \beta ]$. Then $E^{ \times } \subseteq \mathfrak{K}( \mathfrak{A}) $. \end{lemma}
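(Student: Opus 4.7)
The plan is to split on $[E:F]$, which must be $1$ or $p$ since $[F[\beta]:F]$ divides $p$. The trivial case $E=F$ gives $E^\times=F^\times\subseteq Z\subseteq\mathfrak{K}(\mathfrak{A})$ immediately, so I will focus on $[E:F]=p$, in which case $e(E/F)f(E/F)=p$ leaves only the unramified and totally ramified possibilities for $E/F$.

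First I would unpack the structure condition $\beta\mathfrak{A}=\mathfrak{P}_\mathfrak{A}^{-n}$. By Corollary \ref{primeelement} this forces $\Pi_\mathfrak{A}^n\beta\in U_\mathfrak{A}$, so $\beta=\Pi_\mathfrak{A}^{-n}u$ with $u\in U_\mathfrak{A}$; in particular $\beta\in\mathfrak{K}(\mathfrak{A})$ by Lemma \ref{normalizer}. Next, identifying $V$ with $E$ as a one-dimensional $E$-module (dimension count), I would use $\det_F|_{E^\times}=N_{E/F}$ together with $\nu_F\circ N_{E/F}=f(E/F)\nu_E$ to compute $\nu_E(\beta)$: a direct determinant calculation gives $\nu_F(\det\beta)=-np$ when $\mathfrak{A}=\mathfrak{M}$ and $\nu_F(\det\beta)=-n$ when $\mathfrak{A}=\mathfrak{I}$. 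Feeding these into the minimality constraint $\gcd(\nu_E(\beta),e(E/F))=1$ rules out $\mathfrak{A}=\mathfrak{M}$ with $E/F$ ramified, leaving three cases to treat: $\mathfrak{M}$ with $E/F$ unramified; $\mathfrak{I}$ with $E/F$ unramified (forcing $p\mid n$); $\mathfrak{I}$ with $E/F$ totally ramified (forcing $p\nmid n$).

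The heart of the argument is exhibiting $\gamma\in\mathfrak{A}$ with $\mathcal{O}_E=\mathcal{O}_F[\gamma]$ in each case. For the two unramified cases, the minimality residue condition says $\gamma:=\varpi_F^{-\nu_E(\beta)}\beta$ is a unit in $\mathcal{O}_E$ whose image in $k_E$ generates $k_E/k_F$; by Nakayama $\mathcal{O}_E=\mathcal{O}_F[\gamma]$, and substituting $\beta=\Pi_\mathfrak{A}^{-n}u$ (together with $\Pi_\mathfrak{I}^p=\varpi_F\cdot\mathrm{Id}$ in the $\mathfrak{I}$-case) places $\gamma$ inside $\mathfrak{A}$. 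For the totally ramified case, $\gcd(n,p)=1$ yields $a,b\in\mathbb{Z}$ with $-na+pb=1$; the element $\pi_E:=\beta^a\varpi_F^b$ then has $\nu_E(\pi_E)=1$ and $\nu_\mathfrak{I}(\pi_E)=-na+pb=1$, so it is a uniformizer of $E$ lying in $\mathfrak{I}$, and $\mathcal{O}_E=\mathcal{O}_F[\pi_E]$ follows from the standard description of the integers in a totally ramified extension.

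Once $\mathcal{O}_E\subseteq\mathfrak{A}$ is established, $\mathcal{O}_E^\times\subseteq U_\mathfrak{A}\subseteq\mathfrak{K}(\mathfrak{A})$. A uniformizer of $E$ also lies in $\mathfrak{K}(\mathfrak{A})$ ($\varpi_F\in Z$ in the unramified cases, or $\pi_E$ constructed above in the ramified case), so $E^\times=\varpi_E^{\mathbb{Z}}\cdot\mathcal{O}_E^\times\subseteq\mathfrak{K}(\mathfrak{A})$. The principal technical hurdle is keeping the three valuations $\nu_F$, $\nu_E$, $\nu_\mathfrak{A}$ coherent and reading the minimality condition as precisely the statement needed to produce an integral generator of $\mathcal{O}_E$ inside $\mathfrak{A}$; once that alignment is set, the rest is routine.
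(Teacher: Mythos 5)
Your proof is correct and follows the same underlying strategy as the paper: show $\beta\in\mathfrak{K}(\mathfrak{A})$, then produce an $\mathcal{O}_F$-ring generator of $\mathcal{O}_E$ that lies inside $\mathfrak{A}$ by exploiting the minimality condition, and finally combine $\mathcal{O}_E^\times\subseteq U_\mathfrak{A}$ with a uniformizer in $\mathfrak{K}(\mathfrak{A})$. The organizational differences are worth noting. The paper shows $\beta\in\mathfrak{K}(\mathfrak{A})$ directly via the lattice chain ($\beta L_i=L_{i-n}$), whereas you obtain it by observing that $\beta\mathfrak{A}=\mathfrak{P}_\mathfrak{A}^{-n}=\Pi_\mathfrak{A}^{-n}\mathfrak{A}$ forces $\beta=\Pi_\mathfrak{A}^{-n}u$ with $u\in U_\mathfrak{A}$ and then invoking Lemma \ref{normalizer}; this factorization is slightly cleaner and also pays off later when you compute $\nu_F(\det\beta)$ and place $\gamma$, $\pi_E$ into $\mathfrak{A}$. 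The paper then runs a single Nakayama iteration $\mathcal{O}_E=\sum_{i}\varpi_E^i\,\mathcal{O}_F[\varpi_F^{-\nu_E(\beta)}\beta^{e(E/F)}]$ uniformly in $e(E/F)$ and $f(E/F)$, whereas you split into the three surviving $(\mathfrak{A},e(E/F))$-combinations; in the unramified cases your $\gamma=\varpi_F^{-\nu_E(\beta)}\beta$ is exactly the paper's generator (with $e(E/F)=1$) and $\varpi_E=\varpi_F$, so the only genuinely separate thread is the totally ramified case, which you handle via the standard Eisenstein uniformizer fact instead of Nakayama. A nice bonus of your computation is making explicit that $\mathfrak{A}=\mathfrak{M}$ with $E/F$ ramified is excluded by the $\gcd$ condition, a point the paper leaves implicit. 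One small caveat: you say $[F[\beta]:F]$ divides $p$ because it "divides $p$"; the clean justification is that $E$ is a field acting on the $p$-dimensional $F$-space $V$, so $[E:F]\mid\dim_F V=p$ — the degree of a minimal polynomial of a $p\times p$ matrix need not divide $p$ in general.
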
 \begin{proof} First we prove that $\beta \in \mathfrak{K}( \mathfrak{A})$. Consider an $\mathcal{O}_{F}$-lattice chain $\mathcal{L}= \lbrace L_{i}: \small i \in \mathbb{Z} \rbrace $ such that $\mathfrak{A}=\mathfrak{A} (\mathcal{L})$. Take arbitrary $L_{i} \in \mathcal{L}$. We want to have $\beta L_{i} \in \mathcal{L}$. The fractional ideal $\mathfrak{P}_{\mathfrak{A}}^{n}$ is invertible and $\mathfrak{P}_{\mathfrak{A}}^{n}\mathfrak{P}_{\mathfrak{A}}^{-n}=\mathfrak{A}$. We have 
\begin{equation*} L_{i-n} = \mathfrak{A} L_{i-n}= \mathfrak{P} _{\mathfrak{A}}^{-n}\mathfrak{P}_{\mathfrak{A}}^{n} L_{i-n} \subseteq \mathfrak{P}_{\mathfrak{A}}^{-n} L_{i}= \beta \mathfrak{A} L_{i} = \beta  L_{i} \subseteq L_{i-n}. \end{equation*} 
Therefore $ \beta L_{i} = L_{i-n} \in \mathcal{L}$ for any $i \in \mathbb{Z}$ and $\beta \in \mathfrak{K}(\mathfrak{A})$. 

Since $\beta $ is minimal over $F$ the value $\nu _{E} (\beta )$ is coprime with $e(E/F)$. Therefore there exist $n_{1},n_{2} \in \mathbb{Z}$ such that $1=n_{1}\nu _{E} (\beta) +n_{2}e(E/F)= \nu _{E}( \beta ^{n_{1}} \varpi _{F}^{n_{2}})$. We can write any element from $E^{\times }$ as $u (\beta ^{n_{1}} \varpi _{F}^{n_{2}})^{m}$ for some $u \in \mathcal{O} _{E}^{ \times}$, $m \in \mathbb{Z}$. Since $\beta ^{n_{1}} \varpi _{F}^{n_{2}} \in \mathfrak{K}( \mathfrak{A})$, to finish the proof it is enough to show that $\mathcal{O}_{E}^{ \times } \subseteq \mathfrak{K}(\mathfrak{A})$. First we want to show that $\mathcal{O}_{E} \subseteq \mathfrak{A}$. 

By the definition of a minimal element, $\varpi _{F}^{-\nu _{E}( \beta )} \beta ^{e(E/F)}+ \mathfrak{p}_{E}$ generates $k_{E}/k_{F}$ so $\mathcal{O}_{E}=\mathcal{O}_{F}[\varpi _{F} ^{- \nu _{E} (\beta )} \beta ^{e(E/F)} ] +\varpi_{E}\mathcal{O}_{E}$. Iterating
 \begin{align*} \mathcal{O}_{E} =& \mathcal{O}_{F} [ \varpi _{F} ^{- \nu _{E} ( \beta ) } \beta ^{e(E/F)} ] + \varpi _{E} \mathcal{O}_{E} = \mathcal{O}_{F} [ \varpi _{F} ^{- \nu _{E} ( \beta )} \beta ^{e(E/F)}] +\\ &\varpi _{E} \mathcal{O}_{F} [ \varpi _{F} ^{- \nu _{E} ( \beta )} \beta ^{e(E/F)}] + \ldots + \varpi _{E} ^{e(E/F)-1} \mathcal{O}_{F}[ \varpi _{F} ^{- \nu _{E} ( \beta )} \beta ^{e(E/F)}] +\mathfrak{p}_{F} \mathcal{O}_{E}. \end{align*} By Nakayama's lemma, 
 
\begin{align*} \mathcal{O}_{E}=& \mathcal{O}_{F} [\varpi _{F}^{- \nu _{E} ( \beta )} \beta ^{e(E/F)} ] + \varpi _{E} \mathcal{O}_{F} [\varpi _{F} ^{- \nu _{E}(\beta )} \beta ^{e (E/F)}] + \ldots \\+& \varpi _{E} ^{e(E/F)-1} \mathcal{O}_{F}[ \varpi _{F}^{- \nu _{E} ( \beta)} \beta ^{e(E/F)} ].  \end{align*} We can take $\varpi _{E}=\beta ^{n_{1}} \varpi _{F}^{n_{2}} \in \mathfrak{K}( \mathfrak{A})$. Since $1=\nu _{E}(\varpi _{E})=\frac{e(E/F)}{[E:F]}\nu _{F}( {\rm det}(\beta ^{n_{1}} \varpi _{F}^{n_{2}}))$, $\nu _{F}( {\rm det} (\beta ^{n_{1}} \varpi _{F} ^{n_{2}}))>0$ and $\varpi _{E} \in \mathfrak{A}$. Similarly $ \nu _{E} (\varpi _{F}^{ - \nu _{E}( \beta )} \beta ^{e(E/F)})=0$ so $\varpi _{F} ^{ -\nu _{E} ( \beta )} \beta ^{e(E/F)} \in \mathfrak{A}$ and $\mathcal{O}_{E} \subseteq  \mathfrak{A}$.

To sum up we proved $ \mathcal{O}_{E} \subseteq \mathfrak{A}$. Therefore $ \mathcal{O}_{E} ^{\times } \subseteq U_{\mathfrak{A}} \subseteq \mathfrak{K} ( \mathfrak{A})$.     \end{proof}

\begin{remark}Assume $[\mathfrak{A},n,n-1, \beta ]$ is not equivalent to a stratum $[\mathfrak{A},n,n-1, \beta ']$ with a scalar matrix $\beta '$. By Lemma \ref{normalizercontainsE}, \cite[1.5.6]{BK} and \cite[1.4.15]{BK} our definition of a simple stratum coincides with the standard definition of a simple stratum in which the hereditary order is $\mathfrak{M}$ or $\mathfrak{I}$ and $r=n-1$ (see \cite[1.5.5]{BK}). \end{remark}

\begin{lemma}\label{lemmanue}Let $\mathfrak{A}=\mathfrak{M}$ or $\mathfrak{I}$. Let $\beta\in \mathfrak{A}$ be such that $\beta\in \mathfrak{K}(\mathfrak{A})$ and $E=F[\beta ]$ is a field. Assume $[\mathfrak{A},n,n-1, \beta ]$ is not equivalent to $[\mathfrak{A},n,n-1, \beta ']$ with a scalar matrix $\beta'$ and assume $E^{ \times } \subseteq \mathfrak{K}( \mathfrak{A})$. Then \begin{itemize} \item $e(E/F)=e(\mathfrak{A})$
\item $\nu _{E}( \beta )= \nu _{ \mathfrak{A}} (\beta)$.\end{itemize} \end{lemma}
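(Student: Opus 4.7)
The plan is to extract a valuation on $E^{\times}$ from the decomposition of $\mathfrak{K}(\mathfrak{A})$ provided by Lemma~\ref{normalizer}, compare it with the intrinsic valuation $\nu_{E}$, and use the observation that $E^{\times}$-stable lattices in $V$ are forced to be fractional $\mathcal{O}_{E}$-ideals to pin down the scaling constant.

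First I would note that $[E:F]=p$: since $[\mathfrak{A},n,n-1,\beta]$ is not equivalent to a scalar stratum, $\beta\notin F$, so $[E:F]>1$, and because $E=F[\beta]$ has degree dividing the prime $p$ this gives $[E:F]=p$. Hence $V$ is one-dimensional over $E$, and I fix an identification $V\cong E$. By Lemma~\ref{normalizer}, $\nu_{\mathfrak{A}}$ extends to a group homomorphism $\mathfrak{K}(\mathfrak{A})\to\mathbb{Z}$ with open kernel $U_{\mathfrak{A}}$. Restricting to $E^{\times}\subseteq\mathfrak{K}(\mathfrak{A})$, the compact subgroup $\mathcal{O}_{E}^{\times}$ has bounded, hence trivial, image, so the restriction factors through $\nu_{E}$: explicitly $\nu_{\mathfrak{A}}|_{E^{\times}}=d\cdot\nu_{E}$ where $d:=\nu_{\mathfrak{A}}(\varpi_{E})$. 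Applying $\nu_{\mathfrak{A}}$ to the relation $\varpi_{E}^{e(E/F)}=u\varpi_{F}$ for some $u\in\mathcal{O}_{E}^{\times}$ yields
\[
d\cdot e(E/F)=\nu_{\mathfrak{A}}(\varpi_{F})=e(\mathfrak{A}),
\]
so in particular $d\geq 1$ and $e(\mathfrak{A})\geq e(E/F)$.

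For the reverse inequality I would prove that every lattice $L_{i}$ of the chain $\mathcal{L}$ defining $\mathfrak{A}$ is $\mathcal{O}_{E}$-stable. Indeed, for nonzero $a\in\mathcal{O}_{E}$ one can write $a=u\varpi_{E}^{\nu_{E}(a)}$ with $u\in\mathcal{O}_{E}^{\times}$, and then $aL_{i}=L_{i+d\cdot\nu_{E}(a)}\subseteq L_{i}$ since $d\cdot\nu_{E}(a)\geq 0$. Thus each $L_{i}$ is a fractional $\mathcal{O}_{E}$-ideal $L_{i}=\mathfrak{p}_{E}^{j_{i}}$. The strict inclusions $L_{i+1}\subsetneq L_{i}$ give $j_{i+1}-j_{i}\geq 1$, while the period relation $\mathfrak{p}_{F}L_{i}=L_{i+e(\mathfrak{A})}$ gives $j_{i+e(\mathfrak{A})}-j_{i}=e(E/F)$. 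Summing over one period of length $e(\mathfrak{A})$ produces $e(E/F)\geq e(\mathfrak{A})$. Combined with the previous inequality this forces $d=1$ and $e(\mathfrak{A})=e(E/F)$, and then $d=1$ gives $\nu_{\mathfrak{A}}(\beta)=\nu_{E}(\beta)$.

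The main delicate point is the $\mathcal{O}_{E}$-stability of the $L_{i}$, which rests on the positivity $d=\nu_{\mathfrak{A}}(\varpi_{E})\geq 1$; this is automatic from $d\cdot e(E/F)=e(\mathfrak{A})>0$, so once the valuation-extension formalism is in place the remainder is a short bookkeeping argument with the lattice-chain exponents $j_{i}$.
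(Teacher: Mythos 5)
Your argument is correct, and it takes a noticeably different (and more self-contained) route than the paper's. The paper obtains $e(E/F)=e(\mathfrak{A})$ by citing two facts from Bushnell--Kutzko: that $e(E/F)$ divides $e(\mathfrak{A})$ ([BK, 1.2.4]) and that each $L_i/L_{i+1}$ is a $k_E$-vector space ([BK, 1.2.1], giving $f(E/F)\mid f(\mathfrak{A})$); it then squeezes both divisibilities using $e(\mathfrak{A})f(\mathfrak{A})=p=e(E/F)f(E/F)$. For the valuation identity it separately writes $\beta=\Pi_{\mathfrak{A}}^nC$ with $C\in U_{\mathfrak{A}}$ and computes $\nu_E(\beta)=\frac{e(E/F)}{[E:F]}\nu_F(\det\beta)$. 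You instead observe that Lemma~\ref{normalizer} makes $\nu_{\mathfrak{A}}:\mathfrak{K}(\mathfrak{A})\to\mathbb{Z}$ a group homomorphism with kernel $U_{\mathfrak{A}}$, so its restriction to $E^{\times}$ must be $d\cdot\nu_E$ for some $d=\nu_{\mathfrak{A}}(\varpi_E)$; the identity $d\,e(E/F)=e(\mathfrak{A})$ then replaces the [BK, 1.2.4] citation, and your $\mathcal{O}_E$-stability argument for the $L_i$ (which is essentially a direct proof of the content of [BK, 1.2.1], specialised to $V\cong E$) replaces the $k_E$-vector-space citation. A pleasant feature of your route is that both conclusions drop out simultaneously from $d=1$; the paper proves them by two separate computations. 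The one thing worth making explicit in your write-up is the step $aL_i=L_{i+d\nu_E(a)}$: this uses both that $\mathcal{O}_E^{\times}\subseteq U_{\mathfrak{A}}$ (which follows since $\nu_{\mathfrak{A}}$ vanishes on the compact subgroup $\mathcal{O}_E^{\times}$, so it lies in the kernel) and that $\Pi_{\mathfrak{A}}L_i=L_{i+1}$, neither of which you state, though both are immediate from the framework of Subsection~\ref{chain1}.
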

\begin{proof}

For the first equality observe that by \cite[1.2.4 Proposition]{BK} $e(E/F)$ divides $e(\mathfrak{A})$. Pick an $\mathcal{O}_{F}$-lattice chain $\mathcal{L}=\lbrace L_{j}: \small j \in \mathbb{Z} \rbrace $ such that $\mathfrak{A}=\mathfrak{A}(\mathcal{L})$. Let $i$ be a natural number. $L_{i}/L_{i+1}$ is a vector space over $k_{F}$. Define $f( \mathfrak{A})$ to be the dimension of $L_{i}/L_{i+1}$ over $k_{F}$. Since $\mathfrak{A}$ is a principal order, $f(\mathfrak{A})$ does not depend on the choice of $i$ and one has  $e(\mathfrak{A})f(\mathfrak{A})=p$. We also have $e(E/F)f(E/F)=p$. By \cite[1.2.1 Proposition]{BK}, $L_{i}/L_{i+1}$ is a vector space over $k_{E}$ so $f(E/F)$ divides $f( \mathfrak{A})$. Therefore, $e(\mathfrak{A})=e(E/F)$.  

By the definition of $\nu_{\mathfrak{A}}$, $\beta \in \mathfrak{P}_{\mathfrak{A}}^{\nu_{\mathfrak{A}}(\beta)}\setminus \mathfrak{P}_{\mathfrak{A}}^{\nu_{\mathfrak{A}}(\beta)+1}$. Since $\beta $ is an element of the normalizer $\mathfrak{K}(\mathfrak{A})=\langle \Pi _{\mathfrak{A}}\rangle\ltimes U_{\mathfrak{A}}$ the matrix $\beta $ is of the form $\beta=\Pi _{\mathfrak{A}} ^{\nu_{\mathfrak{A}}(\beta)}C$ where $C$ is an element of $U_{\mathfrak{A}}$. 
Therefore $\nu _{F}({\rm det}(\beta))=\frac{\nu_{\mathfrak{A}}(\beta)p}{ e(\mathfrak{A})}$ and $\nu _{E}(\beta)= \frac{e(E/F)}{[E:F]} \nu _{F}({\rm det}(\beta))=\nu_{\mathfrak{A}}(\beta)=\nu _{\mathfrak{A}}(\beta)$.  
 \end{proof}       

%The following description will be useful in the proofs of the main theorems.  

\begin{proposition}\label{Propositionsimplestratum} 
Let $[\mathfrak{A},n,n-1, \beta ]$ be a stratum with $\mathfrak{A}= \mathfrak{M}$ or $\mathfrak{I}$ which is not equivalent to a stratum $[\mathfrak{A},n,n-1, \beta ']$ with a scalar matrix $\beta '$. The stratum $[\mathfrak{A},n, n-1, \beta ] $ is simple if and only if $n=-\nu _{\mathfrak{A}}(\beta)$ and one of the following holds
\begin{enumerate}
\item $\mathfrak{A}=\mathfrak{M}$ and the characteristic polynomial of $\varpi_{F} ^{n}\beta$ is irreducible modulo $\mathfrak{p}_{F}$ or
\item $\mathfrak{A}=\mathfrak{I} $ and $\varpi_{F} ^{\lfloor \frac{n}{p}\rfloor +1}\beta$ is of the form $\Pi _{\mathfrak{I}}^{j}B$ where $1\leqslant j\leqslant p-1$, $B\in U_{\mathfrak{I}}$. \end{enumerate}\end{proposition}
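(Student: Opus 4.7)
The plan is to prove both directions by combining Lemma \ref{normalizer}, which gives $\mathfrak{K}(\mathfrak{A}) = U_{\mathfrak{A}}\rtimes\langle\Pi_{\mathfrak{A}}\rangle$, with Lemma \ref{lemmanue} to translate between the valuations $\nu_{\mathfrak{A}}$ and $\nu_E$, where $E = F[\beta]$.

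For the forward direction, Lemma \ref{normalizercontainsE} places $E^\times$ inside $\mathfrak{K}(\mathfrak{A})$, and Lemma \ref{lemmanue} yields $e(E/F) = e(\mathfrak{A})$ with $\nu_E(\beta) = -n$; since the stratum is not equivalent to a scalar one and $p$ is prime, $[E:F] = p$. In Case 1 ($\mathfrak{A} = \mathfrak{M}$), Lemma \ref{normalizer} lets us write $\beta = \varpi_F^{-n}u$ with $u = \varpi_F^n\beta \in \GL_p(\mathcal{O}_F)\cap \mathcal{O}_E^\times$; the characteristic polynomial of $u$ on $V$ coincides with its minimal polynomial over $F$ (as $V$ is one-dimensional over $E$), and minimality of $\beta$ forces $\bar u \in k_E$ to generate $k_E/k_F$, so the reduction of this polynomial modulo $\mathfrak{p}_F$ must be the minimal polynomial of $\bar u$ over $k_F$, hence irreducible of degree $p$. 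In Case 2 ($\mathfrak{A} = \mathfrak{I}$), Lemma \ref{normalizer} writes $\beta = \Pi_{\mathfrak{I}}^{-n}u$; multiplying by $\varpi_F^{\lfloor n/p\rfloor+1} = \Pi_{\mathfrak{I}}^{p(\lfloor n/p\rfloor+1)}$ and using that minimality forces $\gcd(n,p)=1$, one obtains $\Pi_{\mathfrak{I}}^{j}u$ with $j = p-(n\bmod p)\in\{1,\dots,p-1\}$.

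For the reverse direction in Case 1, irreducibility of the reduction $\bar P$ modulo $\mathfrak{p}_F$ of the characteristic polynomial of $u = \varpi_F^n\beta$ lifts (by Gauss) to irreducibility of $P$ over $F$, making $F[\beta] = F[u] = E$ an unramified extension of degree $p$, and the remaining simplicity conditions ($\beta\mathfrak{M} = \mathfrak{P}_{\mathfrak{M}}^{-n}$ and minimality via the now-irreducible $\bar P$) are immediate. For Case 2, with $\beta = \Pi_{\mathfrak{I}}^{-n}B$, the equality $\beta\mathfrak{I} = \mathfrak{P}_{\mathfrak{I}}^{-n}$ is direct, and once we show $E = F[\beta]$ is a field of degree $p$ the extension $E/F$ is forced to be totally ramified (since $\nu_F(N_{E/F}(\beta)) = -n$ with $\gcd(n,p)=1$ forces $p\mid e(E/F)$), making $k_E = k_F$ and rendering the residual generation condition in minimality vacuous. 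To show $E$ is a field I would apply a Newton polygon argument to $\alpha = \Pi_{\mathfrak{I}}^{j}B$: using the explicit form of $\Pi_{\mathfrak{I}}^{j}$, each entry satisfies $\nu_F(\alpha_{rs})\geq \varepsilon(r) + \max(0, h(r)-s)$, where $h(r)\in\{1,\dots,p\}$ with $h(r)\equiv r+j\pmod p$ and $\varepsilon(r) = 1$ if $r+j>p$ and $0$ otherwise. Around any cycle $(i_1,\dots,i_l)$ of a permutation $\sigma$ appearing in the expansion of an $l\times l$ principal minor, the identity $\sum_t(p\varepsilon(i_t)+d_t) = lj$ (with $d_t = h(i_t)-i_{t+1}$) combined with the trivial bound $(p-1)[d_t]^{+}+[-d_t]^{+}\geq 0$ summed over $t$ gives $\sum_t(\varepsilon(i_t)+[d_t]^{+})\geq lj/p$. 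Summing this over the cycles of $\sigma$ yields $\nu_F(c_i)\geq ij/p$ for each coefficient of the characteristic polynomial of $\alpha$, and combined with $\nu_F(c_p) = j$ (from $\det\alpha = \pm\varpi_F^j\det B$) the Newton polygon is a single segment of slope $j/p$; since $\gcd(j,p)=1$, this forces irreducibility of the polynomial over $F$.

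The main obstacle is the Newton polygon estimate in Case 2 of the reverse direction; the tempting alternative of using Newton's identities on the power sums $\mathrm{Tr}(\alpha^k)$ fails when the residue characteristic of $F$ is smaller than $p$ because one must divide by $k!$, so the direct cycle-by-cycle bound on the principal-minor expansion is the cleaner route.
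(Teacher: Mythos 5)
Your treatment of the forward direction and of the $\mathfrak{A}=\mathfrak{M}$ half of the reverse direction matches the paper's proof in all essentials: you route through Lemmas \ref{normalizercontainsE}, \ref{lemmanue} and \ref{normalizer} exactly as the paper does, and for $\mathfrak{M}$ you observe that the characteristic polynomial of $u=\varpi_F^n\beta$ is its minimal polynomial and that its reduction is the (degree-$p$, irreducible) minimal polynomial of the residue generator. The genuine divergence is in showing that the characteristic polynomial of $\alpha=\Pi_{\mathfrak{I}}^{j}B$ is irreducible in the reverse direction for $\mathfrak{A}=\mathfrak{I}$ and $1<j<p$. The paper writes $m_1 j+m_2 p=1$ and notes $\varpi_F^{m_2}(\Pi_{\mathfrak{I}}^{j}B)^{m_1}=\Pi_{\mathfrak{I}}B_1$ for some $B_1\in U_{\mathfrak{I}}$, so the degree-$p$ field generated by the Eisenstein element $\Pi_{\mathfrak{I}}B_1$ is the same as $F[\alpha]$; that single observation replaces your entire Newton-polygon calculation.

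Your Newton-polygon approach does work, and your observation that Newton's identities on $\tr(\alpha^k)$ fail in small residue characteristic is correct, but the entry-level estimate you start from is false. Membership $B\in U_{\mathfrak{I}}$ only gives $\nu_F(B_{ts})\geq 1$ for $t>s$, not $\nu_F(B_{ts})\geq t-s$, so $\nu_F(\alpha_{rs})\geq\varepsilon(r)+\max(0,h(r)-s)$ can fail: with $p=3$, $j=1$, $B=\mathrm{Id}+\varpi_F E_{31}$ one gets $\alpha_{21}=\varpi_F$ of valuation $1$ while your bound would demand $\geq 2$. The correct estimate is $\nu_F(\alpha_{rs})\geq\varepsilon(r)+[\,h(r)>s\,]$, which replaces $[d_t]^{+}$ with the indicator $[\,d_t>0\,]$ in your cycle sum. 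The conclusion $\sum_t(\varepsilon(i_t)+[\,d_t>0\,])\geq lj/p$ still follows from your identity $\sum_t(p\varepsilon(i_t)+d_t)=lj$, but now one needs $p[\,d_t>0\,]-d_t\geq0$, which is not the ``trivial'' nonnegativity you cite but uses the structural fact $d_t\leq p-1$ (both $h(i_t)$ and $i_{t+1}$ lie in $\{1,\dots,p\}$). With that repair the argument is valid, but the paper's power-reduction to the Eisenstein case $j=1$ is substantially shorter and is the route I would keep.
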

\begin{proof}
Assume that $[\mathfrak{M},n,n-1,\beta ]$ is a simple stratum which is not equivalent to a stratum $[\mathfrak{M},n,n-1,\beta']$ with a scalar matrix $\beta'$. We want to prove that the characteristic polynomial of $\varpi_{F} ^{n}\beta $ is irreducible modulo $\mathfrak{p}_{F}$ and $\nu _{\mathfrak{M}}(\beta )=-n$. The second assertion follows from the definition of a simple stratum. Since $\beta$ is minimal over $F$, $\varpi_{F} ^{-\nu _{E}(\beta )}\beta ^{e(E/F)}+\mathfrak{p}_{E}$ generates the extension $k_{E}/k_{F}$. By Lemma \ref{normalizercontainsE} and Lemma \ref{lemmanue}, $-\nu _{E}(\beta)=-\nu_{\mathfrak{M}}(\beta)=n$ and $e(E/F)=e(\mathfrak{M})=1$. Therefore $\varpi_{F} ^{n}\beta +\mathfrak{p}_{E}$ generates $k_{E}/k_{F}$. This means that the minimal polynomial of $\varpi_{F}^{n}\beta $ is irreducible modulo $\mathfrak{p}_{F}$ and is of degree $p$ so it coincides with the characteristic polynomial of $\varpi_{F}^{n}\beta$. In particular, the characteristic polynomial of $\varpi_{F} ^{n}\beta $ is irreducible modulo $\mathfrak{p}_{F}$.

Assume now that $[\mathfrak{I},n,n-1,\beta ]$ is a simple stratum which is not equivalent to a stratum $[\mathfrak{I},n,n-1,\beta']$ with a scalar matrix $\beta'$. We want to show that $n= -\nu _{\mathfrak{I}}( \beta )$ and $\varpi _{F} ^{\lfloor \frac{n}{p} \rfloor +1} \beta $ is of the form $\Pi _{\mathfrak{I}}  ^{j} B$ where $0<j<p$ and $B \in U_{\mathfrak{I}}$. By the definition of a simple stratum, $n=- \nu _{\mathfrak{I}}(\beta )$. By Lemma \ref{normalizercontainsE} and Lemma \ref{normalizer}, $\beta \in \mathfrak{K}(\mathfrak{I})=\langle\Pi _{\mathfrak{I}} \rangle\ltimes U_{\mathfrak{I}}$ and there exists a unique $j\in \mathbb{N}$ and $B\in U_{\mathfrak{I}}$ such that $ \varpi _{F}^{\lfloor \frac{n}{p} \rfloor +1}\beta=\Pi _{\mathfrak{I}} ^{j}B$. We want to show that $0<j<p$. We have \begin{equation*}j=\nu _{ \mathfrak{I}}( \varpi _{F}^{ \lfloor \frac{n}{p} \rfloor +1} \beta )=-n+p( \lfloor \frac{n}{p} \rfloor +1). \end{equation*} The element $\beta $ is minimal over $F$ so $n=- \nu _{E} ( \beta )$ is coprime with $e(E/F)=e(\mathfrak{I})=p$. Therefore $0<j=p( \lfloor \frac{n}{p} \rfloor +1)-n <p$.  
 
For the opposite direction take a stratum $[\mathfrak{M},n,n-1,\beta ]$ which is not equivalent to a stratum $[\mathfrak{M},n,n-1,\beta']$ with a scalar matrix $\beta'$ and assume that the characteristic polynomial of $\varpi_{F} ^{n}\beta $ is irreducible modulo $\mathfrak{p}_{F}$ and $\nu _{\mathfrak{M}}(\beta )=-n$. We want to show that the stratum $[ \mathfrak{M}, n,n-1, \beta ]$ is simple. $E$ is a field because the characteristic polynomial of $ \varpi_{F} ^{n} \beta $ is irreducible. We show that $\beta \in \mathfrak{K}( \mathfrak{M})$. By Lemma \ref{normalizer}, $\mathfrak{K}(\mathfrak{M})=\GL_{p}(\mathcal{O}_{F}) \rtimes \langle \varpi _{F} {\rm Id}_{p \times p} \rangle $. Denote the characteristic polynomial of $\varpi_{F} ^{n}\beta $ by $f$. Since $f$ is irreducible modulo $\mathfrak{p}_{F}$ the element $f(0)={\rm det} ( \varpi_{F} ^{n}\beta )$ does not belong to $\mathfrak{p}_{F}$. Since $\nu _{\mathfrak{M}}(\beta)=-n$, $\beta\in \mathfrak{P}_{\mathfrak{M}}^{-n}$ and $\varpi_{F} ^{n}\beta\in \mathfrak{M}$. Therefore $\varpi_{F} ^{n}\beta\in \GL_{p}(\mathcal{O}_{F})$, $\beta\in \mathfrak{K}(\mathfrak{M})$ and $ \beta \mathfrak{M} = \mathfrak{P}_{ \mathfrak{M}}^{ -n}$. It remains to show that $\beta $ is minimal over $F$. The element $\varpi _{F}^{n} \beta + \mathfrak{p}_{E} $ generates the extension of the residues fields and the extension is of degree $p$. Therefore $f(E/F) =p$, $e(E/F)=1$ and the first condition from the definition of a minimal element is satisfied. Compute $\nu _{E}( \beta )= \frac{e(E/F)}{[E:F]} \nu _{F}( {\rm det} (\beta))=-n$. Since the characteristic polynomial of $\varpi _{F}^{n} \beta = \varpi _{F}^{- \nu _{E}( \beta)} \beta ^{e(E/F)}$ is irreducible modulo $\mathfrak{p}_{F}$, $\varpi _{F}^{- \nu _{E}( \beta )} \beta ^{e(E/F)} + \mathfrak{p}_{E}$ generates the field extension  $k_{E}/k_{F}$.            

Finally, consider a stratum $[\mathfrak{I},n,n-1,\beta]$ which is not equivalent to a stratum $[\mathfrak{A},n,n-1,\beta']$ with a scalar matrix $\beta'$ and such that $\beta$ of the form $\varpi_{F} ^{-\lfloor\frac{n}{p}\rfloor -1}\Pi _{\mathfrak{I}} ^{j}B$ where $0<j<p$, $B\in U_{\mathfrak{I}}$ and $n=-\nu_{\mathfrak{I}}(\beta)$. We want to prove that the stratum $[\mathfrak{I}, n, n-1, \beta ]$ is simple. First we prove that $E=F[ \beta ] $ is a field. Let $f$ be the characteristic polynomial of $\Pi _{\mathfrak{I}} ^{j}B$. If $j=1$, then $f(x)=x^{p}$ modulo $\mathfrak{p}_{F}$ and $f(0)= {\rm det}(\Pi _{\mathfrak{I}}  B)=u \varpi _{F}$ for some $u \in \mathcal{O}_{F}^{\times}$. We deduce that $f(x)$ is Eisenstein and therefore it is irreducible. In particular, $E$ is a field. Consider now the general case $0<j<p$. Since $j$ is coprime with $p$, there exists $m_{1}, m_{2} \in \mathbb{Z}$ such that $m_{1}j +m_{2}p=1$ and $\varpi_{F}^{m_{2}}(\Pi _{ \mathfrak{I}} ^{j}B)^{m_{1}}=\Pi _{ \mathfrak{I}}  B_{1}$ for some $B_{1} \in U_{\mathfrak{I}}$. Since $\Pi _{ \mathfrak{I}}   B_{1}$ generates a field extension of degree $p$ this means that $E=F[\Pi _{ \mathfrak{I}}  ^{j}B]$ is a field. We have $\beta \in \mathfrak{K}( \mathfrak{I})$, $\nu _{\mathfrak{I}}( \beta )=-n$ and $\beta \mathfrak{I} = \mathfrak{P}_{\mathfrak{I}}^{-n}$. The characteristic polynomial $f(x)$ of $ \varpi _{F}^{ \lfloor \frac{n}{p} \rfloor +1} \beta $ is equal to $x^{p}$ modulo $\mathfrak{p}_{F}$. Therefore the extension $k_{E}/k_{F}$ is trivial and to check that $ \beta $ is minimal it is enough to check that $\nu _{E}( \beta )$ is coprime with $e(E/F)=p$. By the assumption, $\nu _{E} ( \beta )= \frac{e(E/F)}{[E:F]} \nu _{F}(  {\rm det} (\beta)) =-p( \lfloor \frac{n}{p} \rfloor +1)+j= \nu _{ \mathfrak{I}} ( \beta ) =-n $. If $p$ would divide $n$, then $\nu _{E}( \beta )=-n-p+j=-n$ and $j=p$ which is impossible. Therefore $ \beta $ is minimal over $F$.             
\end{proof}
    
\end{subsection}\begin{subsection}{Cuspidal representations of $G$}\label{sectionclasscusp}
In this subsection we recall the classification of irreducible cuspidal representations of $G=\GL_{p}(F)$. The classification originates in Carayol's work (\cite{C}). We will follow \cite{K} and \cite{BKnotes}. \cite{BKnotes} are unpublished notes so for the convenience of the reader we reproduce the proof.  
%The goal of this subsection is to recall the proof of the following theorem:

\begin{theorem}[\cite{BKnotes},\cite{K},\cite{C}]\label{theoremclassificationcusrep} Let $\pi$ be an irreducible cuspidal representation of $G$. Then there exists a character $\chi $ of $F^{\times }$ such that $\chi \pi$ is of one of the following forms: 
\begin{enumerate}
\item $\cInd _{KZ}^{G} \Lambda $ with $\Lambda$ such that $\Lambda \mid _{K}$ is inflated from some irreducible cuspidal representation of $\GL _{p}(k_{F})$,   
\item $l(\pi )>0$ and $\pi  $ contains a simple stratum $[\mathfrak{A},n,n-1,\beta ]$ with $n\geqslant 1$ and $\mathfrak{A}$ principal such that $\pi \cong \cInd _{J}^{G}\Lambda $ where $J=F[\beta ]^{\times }U_{\mathfrak{A}}^{\lfloor\frac{n+1}{2}\rfloor }$ and $\Lambda$ restricted to $U_{\mathfrak{A}}^{\lfloor \frac{n}{2}\rfloor +1}$ contains $\psi _{\beta }$.
\end{enumerate}
Moreover every representation $\pi $ such that $l(\pi)\leqslant l(\chi \pi)$ for every character $\chi$ of $F^{\times}$ satisfying $(1)$ or $(2)$ is cuspidal. \end{theorem}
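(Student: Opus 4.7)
The plan is to split by the normalized level $l(\pi)$.

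If $l(\pi) = 0$, then $\pi$ contains the trivial character of $U^1_{\mathfrak{A}}$ for some principal hereditary order $\mathfrak{A}$. By Lemma \ref{principal} we may assume $\mathfrak{A} \in \{\mathfrak{M}, \mathfrak{I}\}$. The Iwahori case $\mathfrak{A} = \mathfrak{I}$ is ruled out by cuspidality: a nonzero $U^1_{\mathfrak{I}}$-fixed subspace of $\pi$ yields, after passing modulo $\mathfrak{p}_F$, a nonzero $\GL_p(k_F)$-module with a Borel-fixed vector, hence nontrivial Jacquet module for $\pi$. So $\mathfrak{A} = \mathfrak{M}$, and $\pi|_K$ contains an irreducible representation inflated from some irreducible cuspidal representation of $\GL_p(k_F)$. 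After twisting by a suitable character $\chi$ of $F^\times$ to match central characters, this $K$-type extends to a representation $\Lambda$ of $KZ$, and the unicity of Theorem \ref{Paskunas} combined with Frobenius reciprocity yields $\chi\pi \cong \cInd_{KZ}^G \Lambda$, giving case~(1).

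If $l(\pi) > 0$, I would choose $(\mathfrak{A}, n)$ realising $l(\pi) = n/e(\mathfrak{A})$, so that $\pi$ contains a fundamental stratum $[\mathfrak{A}, n, n-1, \beta_1]$; one may take $\mathfrak{A} \in \{\mathfrak{M}, \mathfrak{I}\}$ by Lemma \ref{principal}. The strata-refinement theory of \cite{BK,BKnotes,K} allows one to replace $\beta_1$ by an equivalent $\beta$ with $F[\beta]$ a field. Since $p$ is prime, $[F[\beta]:F] \in \{1, p\}$. If $[F[\beta]:F] = 1$, then $\beta$ is essentially scalar and a standard character-twist argument produces $\chi$ such that $\chi^{-1}\pi$ has strictly smaller level; iterating finitely many times either lands in case~(1) or delivers a simple stratum with $E := F[\beta]$ of degree $p$ over $F$.

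Once $\chi\pi$ (for a suitable $\chi$) contains a simple stratum $[\mathfrak{A}, n, n-1, \beta]$ with $[E:F] = p$, I would apply the Bushnell--Kutzko construction of extended maximal simple types. Set $J := E^\times U^{\lfloor (n+1)/2\rfloor}_{\mathfrak{A}} = F[\beta]^\times U^{\lfloor (n+1)/2 \rfloor}_{\mathfrak{A}}$, which is a group because $E^\times \subseteq \mathfrak{K}(\mathfrak{A})$ by Lemma \ref{normalizercontainsE}. One constructs $\Lambda$ on $J$ whose restriction to $U_{\mathfrak{A}}^{\lfloor n/2 \rfloor + 1}$ contains $\psi_\beta$, and computes that the $G$-intertwining of $\Lambda$ equals $J$ --- this is the heart of the Bushnell--Kutzko argument, relying crucially on the simplicity of $\beta$. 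Mackey's irreducibility criterion then forces $\cInd_J^G \Lambda$ to be irreducible supercuspidal, and Frobenius reciprocity embeds it into $\chi\pi$, giving equality.

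The main obstacle I expect is the refinement of an arbitrary fundamental stratum to a simple one together with the management of the scalar case via character twists --- an inductive argument on the level, exploiting the dichotomy $[F[\beta]:F] \in \{1, p\}$ forced by primality of $p$. For the converse statement, case~(1) is cuspidal because the inflation of a cuspidal $\GL_p(k_F)$-representation has trivial Jacquet module for every proper parabolic of $G$; case~(2) is cuspidal because the intertwining computation forces the matrix coefficients of $\cInd_J^G \Lambda$ to be compactly supported modulo the centre, which is equivalent to cuspidality.
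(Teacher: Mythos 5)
Your proposal reaches the same conclusion and shares the overall decomposition (normalized level $0$ versus positive, twist to minimal level, use primality to force the field extension to have degree $1$ or $p$), but it routes through the strata machinery at a finer level of detail than the paper does. For level $0$ the paper simply cites \cite[Theorem 8.4.1]{BK}, while you sketch the Jacquet-module argument that rules out an Iwahori-fixed vector; this is morally correct, though the step from ``$U_{\mathfrak{I}}^{1}$-fixed vector gives a $\GL_{p}(k_{F})$-representation with an $N(k_{F})$-fixed vector'' to ``the Jacquet module of $\pi$ is nonzero'' deserves a citation (it is essentially the Borel--Casselman--Matsumoto theorem packaged for depth zero). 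For positive level, the paper's pivot is Proposition \ref{7.13} (Kutzko's theorem), which hands you a simple stratum outright, followed by Lemma \ref{minimalleveltwisting}: the equivalence ``some equivalent stratum has $[F[\beta]:F]=1$'' $\Leftrightarrow$ ``some twist strictly lowers the level.'' Having fixed $\pi$ to have minimal level among its twists, the scalar case is excluded in one step, and $[F[\beta]:F]=p$ follows from primality. You instead re-derive the content of Kutzko's theorem (fundamental stratum at minimal level, then refinement so that $F[\beta]$ is a field) and then handle the scalar case by an explicit descent: twist down and iterate, with termination from the levels lying in $\frac{1}{p}\mathbb{Z}_{\geq 0}$. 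Both are sound; the paper's device (Lemma \ref{minimalleveltwisting}) is cleaner because it replaces the iteration with a single equivalence, but your descent makes the role of primality and of the level drop more transparent. For the converse direction both of you invoke the Bushnell--Kutzko intertwining computation and Mackey to get irreducibility and cuspidality of $\cInd_{J}^{G}\Lambda$; the paper packages this as Lemma \ref{lemmasimpleimpliescusp}.
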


\begin{remark}{(see proof of Theorem \ref{theoremclassificationcusrep})}\label{remarkminimallevel} If $\pi$ is an irreducible cuspidal representation with the minimal normalized level among all its one-dimensional twists $\pi\otimes\chi$, then $\pi$ satisfies 1. or 2. from Theorem \ref{theoremclassificationcusrep}. \end{remark}

\begin{remark}\label{remarkconjugation}If an irreducible smooth representation of $G$ contains some stratum, then it contains all strata $G$-conjugate to it. Therefore in Theorem \ref{theoremclassificationcusrep} we can assume that $\mathfrak{A}=\mathfrak{M}$ or $\mathfrak{I}$.\end{remark} 

Before the proof of Theorem \ref{theoremclassificationcusrep} we state some lemmas. 

\begin{lemma}{(see \cite[2.4.11]{BK})} \label{determinantscalar} 
Let $\mathfrak{A}$ be a principal hereditary order and let $n\in \mathbb{N}$, $n \geqslant 1$. Then any character of $U _{ \mathfrak{A}}^{n}$ which is trivial on $U_{\mathfrak{A}}^{n+1}$ and factors through a determinant is of the form $\psi _{ \beta }$ where $\beta $ is a scalar matrix. 
\end{lemma}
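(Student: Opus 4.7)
The plan is to exploit local additive duality for $F$ (via the chosen character $\psi$), combined with the expansion of $\det(1+y)$ in terms of the elementary symmetric polynomials in the eigenvalues of $y$. Since $\chi$ factors through the determinant, I would first write $\chi = \chi_{0}\circ\det$ for a smooth character $\chi_{0}$ of $F^{\times}$, obtained by extending along $\det\colon U_{\mathfrak{A}}^{n}\to \mathcal{O}_{F}^{\times}$ the character that $\chi$ pulls back to. Since $\chi_{0}$ has finite level, additive self-duality of $F$ via $\psi$ produces an element $c \in F$ with
\[
\chi_{0}(1+u) \;=\; \psi(-c u) \quad \text{for all } u \in \mathfrak{p}_{F}^{M},
\]
for $M$ sufficiently large. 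The candidate scalar matrix is then $\beta := c\cdot \mathrm{Id}_{p\times p}$, and the rest of the argument verifies $\chi = \psi_{\beta}$ on $U_{\mathfrak{A}}^{n}$.

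Next, the key calculation. For $x = 1+y \in U_{\mathfrak{A}}^{n}$ with $y \in \mathfrak{P}^{n}$, I would expand
\[
\det(1+y) - 1 \;=\; \tr(y) + \sum_{k=2}^{p} e_{k}(y),
\]
where $e_{k}(y)$ is the $k$-th elementary symmetric polynomial in the eigenvalues of $y$, a homogeneous polynomial of degree $k$ in the matrix entries of $y$. Since $y \in \mathfrak{P}^{n}$ one has $e_{k}(y) \in \mathfrak{p}_{F}^{\lceil kn/e(\mathfrak{A})\rceil}$, so the remainder $\det(1+y)-1-\tr(y)$ lies in $\mathfrak{p}_{F}^{\lceil 2n/e(\mathfrak{A})\rceil}$. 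Substituting into $\chi = \chi_{0}\circ\det$,
\[
\chi(1+y) \;=\; \psi\bigl(-c(\det(1+y)-1)\bigr) \;=\; \psi(-c\tr(y))\cdot \psi\Bigl(-c\sum_{k\geq 2}e_{k}(y)\Bigr).
\]
Because $n \geq 1$, for $c$ with the right valuation the product $c\cdot e_{k}(y)$ lies in $\mathfrak{p}_{F}$ for each $k \geq 2$, so $\psi$ kills the second factor and
\[
\chi(1+y) \;=\; \psi(-c\tr(y)) \;=\; \psi(-\tr(\beta y)) \;=\; \psi_{\beta}(1+y),
\]
which gives the sought scalar $\beta$.

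\textbf{Main obstacle.} The delicate step is the level bookkeeping: the valuation of $c$, the conductor of $\chi_{0}$ and the depth of $y$ must be orchestrated so that the duality formula applies to $\chi_{0}(\det(1+y))$ and, simultaneously, the higher-degree remainder terms $c\cdot e_{k}(y)$ are killed by $\psi$. A cleaner fallback, which I would use if tracking the constants becomes unwieldy, is to argue layer by layer: each quotient $U_{\mathfrak{A}}^{m}/U_{\mathfrak{A}}^{m+1}$ is canonically isomorphic to the additive group $\mathfrak{P}^{m}/\mathfrak{P}^{m+1}$ via $1+y\mapsto y$, and on this quotient $\det$ descends to $\tr$ (since the higher-order terms in the expansion above land in the next layer). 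Consequently the characters of $\mathfrak{P}^{m}/\mathfrak{P}^{m+1}$ factoring through $\tr$ are precisely those of the form $y\mapsto \psi(-\tr(cy))$ with $c\in F$ a scalar in $A$, i.e.\ the restrictions of $\psi_{c\mathrm{Id}}$ to that layer. A straightforward induction on the level of $\chi$ assembles these scalar contributions into a single scalar matrix $\beta$ realising $\chi = \psi_{\beta}$ on all of $U_{\mathfrak{A}}^{n}$.
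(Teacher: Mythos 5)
The paper does not actually prove this lemma; it simply cites \cite[2.4.11]{BK}, so there is no argument in the source to compare against directly. Your two approaches (writing $\chi=\chi_{0}\circ\det$ and using additive duality to produce a scalar $c$; or arguing layer by layer that $\det$ reduces to $\tr$) are the natural route and almost certainly match the argument in \cite{BK}. The overall idea is sound, and the ``main obstacle'' you flag is exactly where the careful work lies. Two specific remarks.

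First, in the duality step you say ``for $M$ sufficiently large,'' but $M$ cannot be taken freely: you must apply the formula $\chi_{0}(1+u)=\psi(-cu)$ at the fixed depth $M=\lceil n/e(\mathfrak{A})\rceil$, since that is where $\det(U_{\mathfrak{A}}^{n})$ sits inside $\mathcal{O}_{F}^{\times}$. The formula is valid at that depth precisely because the statement implicitly assumes $\chi$ trivial on $U_{\mathfrak{A}}^{2n}$ (which is forced anyway: $\psi_{\beta}$ is trivial there, so the conclusion $\chi=\psi_{\beta}$ cannot hold otherwise). Tracing this through, $\chi_{0}$ is trivial on $1+\mathfrak{p}_{F}^{\lceil 2n/e\rceil}$, so its conductor is at most $\lceil 2n/e\rceil\leqslant 2\lceil n/e\rceil$, and the quotient $(1+\mathfrak{p}_{F}^{\lceil n/e\rceil})/(1+\mathfrak{p}_{F}^{2\lceil n/e\rceil})$ on which duality is applied really is abelian of the required shape; this also fixes the valuation $\nu_{F}(c)$ so that $c\cdot e_{k}(y)\in\mathfrak{p}_{F}$ for all $k\geqslant 2$. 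That is the bookkeeping you postpone, and it does close.

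Second, the parenthetical in your fallback — that the higher-order terms of $\det(1+y)$ ``land in the next layer,'' so $\det$ descends to $\tr$ on \emph{every} quotient $U_{\mathfrak{A}}^{m}/U_{\mathfrak{A}}^{m+1}$ — is not true as stated. For $\mathfrak{A}=\mathfrak{I}$ in $\GL_{2}$ (so $e=2$) and $m=1$, write $y=\begin{pmatrix}a&b\\ c&d\end{pmatrix}$ with $a,c,d\in\mathfrak{p}_{F}$, $b\in\mathcal{O}_{F}$; then $\det(1+y)-1-\tr(y)=ad-bc$, and $bc\in\mathfrak{p}_{F}$ sits at the \emph{same} depth as $\tr(y)=a+d\in\mathfrak{p}_{F}$, not one deeper. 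The identity ``$\det=\tr$ on the layer'' only holds when $e(\mathfrak{A})\mid m$. This does not wreck the argument: when $e(\mathfrak{A})\nmid m$ one has $\det(U_{\mathfrak{A}}^{m})=\det(U_{\mathfrak{A}}^{m+1})=1+\mathfrak{p}_{F}^{\lceil m/e\rceil}$, so any character of the layer factoring through $\det$ is already trivial, and there is nothing to match with a scalar. But the layer-by-layer induction should be organised around the levels $m$ divisible by $e(\mathfrak{A})$, with the intermediate layers dismissed by the surjectivity observation; as written, the induction step silently assumes something false for the intermediate $m$.
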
  

\begin{lemma}\label{minimalleveltwisting} Let $\mathfrak{A}$ be a principal hereditary order. Let $\pi $ be an irreducible cuspidal representation of $\GL_{p}(F)$ which contains a simple stratum $[\mathfrak{A},n,n-1, \beta _{1} ]$. Then the following conditions are equivalent:
\begin{enumerate} 
\item there exists a stratum $[\mathfrak{A},n,n-1, \beta ]$ equivalent to $[\mathfrak{A},n, n-1, \beta  _{1}]$ such that $[F [ \beta ]:F]=1 $
\item there exists a character $\chi $ of $F^{\times}$ such that $l(\chi \pi )< l( \pi)$.
\end{enumerate} \end{lemma}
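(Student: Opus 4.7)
The strategy is to recognise both conditions as statements about characters of $U_{\mathfrak{A}}^{n}$ factoring through the determinant. Using $\det(1+x) \equiv 1 + \tr(x)$ modulo $\mathfrak{P}_{\mathfrak{A}}^{2n}$, one checks that for $n \geq 1$ and any scalar $cI \in \mathfrak{P}_{\mathfrak{A}}^{-n}$ the character $\psi_{cI}$ on $U_{\mathfrak{A}}^{n}$ factors through $\det$; conversely Lemma~\ref{determinantscalar} says every character of $U_{\mathfrak{A}}^{n}$ factoring through $\det$ is of this form. Thus the characters of $U_{\mathfrak{A}}^{n}$ obtained as restrictions $(\chi \circ \det)|_{U_{\mathfrak{A}}^{n}}$ are exactly the $\psi_{cI}$ with $cI$ scalar.

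For $(1) \Rightarrow (2)$, suppose $[\mathfrak{A}, n, n-1, \beta_{1}]$ is equivalent to $[\mathfrak{A}, n, n-1, aI]$ for some scalar $aI$. Then $\pi$ contains $\psi_{aI}$ on $U_{\mathfrak{A}}^{n}$, and by the preceding paragraph there exists a character $\chi$ of $F^{\times}$ with $(\chi \circ \det)|_{U_{\mathfrak{A}}^{n}} = \psi_{aI}$. The twist $\chi^{-1}\pi$ then contains the trivial character of $U_{\mathfrak{A}}^{n}$, giving $l(\chi^{-1}\pi) \leq (n-1)/e(\mathfrak{A}) < n/e(\mathfrak{A}) = l(\pi)$.

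For $(2) \Rightarrow (1)$, I would argue by contrapositive: assuming $[\mathfrak{A}, n, n-1, \beta_{1}]$ is not equivalent to any scalar stratum, I show that no twist of $\pi$ lowers the level. Given $\chi$, write $(\chi \circ \det)|_{U_{\mathfrak{A}}^{n}} = \psi_{cI}$; then $\chi\pi$ contains $\psi_{\beta_{1}+cI}$ on $U_{\mathfrak{A}}^{n}$. The crucial step is to verify that $[\mathfrak{A}, n, n-1, \beta_{1}+cI]$ is still simple via Proposition~\ref{Propositionsimplestratum}: for $\mathfrak{A} = \mathfrak{M}$, the characteristic polynomial of $\varpi_{F}^{n}(\beta_{1}+cI)$ modulo $\mathfrak{p}_{F}$ is an affine translate of the irreducible polynomial coming from $\beta_{1}$, hence still irreducible; for $\mathfrak{A} = \mathfrak{I}$, one has $\varpi_{F}^{\lfloor n/p \rfloor +1} cI \in \mathfrak{P}_{\mathfrak{I}}^{p}$ which sits strictly inside $\mathfrak{P}_{\mathfrak{I}}^{j}$ since $1 \leq j < p$, so $\varpi_{F}^{\lfloor n/p \rfloor + 1}(\beta_{1}+cI)$ retains the form $\Pi_{\mathfrak{I}}^{j} B'$ with $B' \in U_{\mathfrak{I}}$. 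Moreover the new stratum is not equivalent to any scalar stratum, for otherwise $\beta_{1}$ itself would be (shift by $-cI$); thus Proposition~\ref{Propositionsimplestratum} applies and the stratum is simple, hence fundamental.

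Finally, since $\chi\pi$ contains the fundamental stratum $[\mathfrak{A}, n, n-1, \beta_{1}+cI]$, the standard Bushnell--Kutzko fact that a representation containing such a stratum has normalized level exactly $n/e(\mathfrak{A})$ yields $l(\chi\pi) = n/e(\mathfrak{A}) = l(\pi)$, so (2) fails. The main obstacle is this last invocation, which rests on Lemma~\ref{fundamentalintertwine} together with the characterisation that only non-fundamental strata can witness a strictly smaller level; the remaining computational checks that simplicity is preserved under the scalar perturbation $\beta_{1} \mapsto \beta_{1} + cI$ are elementary but must be done separately for the two principal orders $\mathfrak{M}$ and $\mathfrak{I}$.
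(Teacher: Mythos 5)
Your proof of $(1)\Rightarrow(2)$ is essentially the paper's: you exploit the fact that a scalar $\psi_{aI}$ on $U_{\mathfrak{A}}^{n}$ factors through $\det$, pick $\chi$ restricting to its inverse, and observe the trivial character of $U_{\mathfrak{A}}^{n}$ then lies in $\chi\pi$. The paper carries out the same Leibniz-formula computation, just phrased via a character of $(1+\mathfrak{p}_F^{n/e(\mathfrak{A})})/(1+\mathfrak{p}_F^{n/e(\mathfrak{A})+1})$.

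For $(2)\Rightarrow(1)$ your route is genuinely different. The paper argues forwards: given $\chi$ lowering the level, it locates in $\pi$ a scalar fundamental stratum $[\mathfrak{A}_1,m,m-1,b_2 I]$ via Lemma~\ref{determinantscalar}, then feeds this together with $[\mathfrak{A},n,n-1,\beta_1]$ into Lemma~\ref{fundamentalintertwine} and the Bushnell--Kutzko intertwining-implies-equivalence machinery (their 2.6.1, 2.6.4) to conclude $\beta_1$ is congruent to a scalar. You instead argue by contrapositive and use the paper's own explicit description of simple strata (Proposition~\ref{Propositionsimplestratum}): assuming $\beta_1$ is not equivalent to a scalar, you show the perturbed stratum $[\mathfrak{A},n,n-1,\beta_1+cI]$ remains simple, so $\chi\pi$ contains a fundamental stratum at level $n/e(\mathfrak{A})$ and the level cannot drop. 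Your approach buys explicitness and leans only on results inside this paper plus the standard fact that a fundamental stratum pins down the level, whereas the paper leans on the deeper intertwining/equivalence results from \cite{BK}. The verification of simplicity under the perturbation (irreducibility of an affine translate mod $\mathfrak{p}_F$ for $\mathfrak{M}$; for $\mathfrak{I}$, that $\varpi_F^{\lfloor n/p\rfloor+1}cI\in\mathfrak{P}_{\mathfrak{I}}^{p}$ so $\Pi_{\mathfrak{I}}^{j}B$ absorbs it) is correct.

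Two small gaps to flag. First, your opening claim that the restrictions $(\chi\circ\det)|_{U_{\mathfrak{A}}^{n}}$ are \emph{exactly} the characters $\psi_{cI}$ with $cI$ scalar is only right for those restrictions trivial on $U_{\mathfrak{A}}^{n+1}$; if $\chi$ has large conductor the restriction does not factor through $U_{\mathfrak{A}}^{n}/U_{\mathfrak{A}}^{n+1}$ and need not be of the form $\psi_{cI}$ on all of $U_{\mathfrak{A}}^{n}$. Second, and relatedly, your $(2)\Rightarrow(1)$ argument tacitly assumes $\chi\circ\det$ is trivial on $U_{\mathfrak{A}}^{n+1}$, i.e.\ $\nu_{\mathfrak{A}}(cI)\geqslant -n$. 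The complementary case (high-conductor $\chi$, so $cI\in\mathfrak{P}_{\mathfrak{A}}^{-m}$ with $m>n$) does occur and needs a line: there $\psi_{\beta_1}$ dies on $U_{\mathfrak{A}}^{m}$, so $\chi\pi$ contains the scalar stratum $[\mathfrak{A},m,m-1,cI]$, which is fundamental (a nonzero scalar coset cannot contain nilpotents), and the level strictly rises. Once that case is added and the "fundamental stratum determines the level'' invocation is backed by Lemma~\ref{fundamentalintertwine} together with Proposition~\ref{7.13}, the argument is complete.
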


%\begin{remark}The above lemma is a slightly changed version of \cite[9.2]{BKnotes}. We added the assumption on equivalence class. Without that asssumption the statement is not necessary true. We recall from \cite{BKnotes} the proof of the implication $(1)$ implies $(2)$. For the convenience of the reader we also give a detailed proof the other implication. \end{remark} 

\begin{proof}[Proof of Lemma \ref{minimalleveltwisting}] First we assume that there exists a stratum $[\mathfrak{A}, n, n-1,\beta ]$ equivalent to $[\mathfrak{A}, n, n-1, \beta _{1}]$ with $[F [ \beta ]: F]=1$. We want to show that there exists a character $\chi $ of $F^{ \times }$ such that $l( \chi \pi )< l( \pi)$. Let $b\in F$ be such that $\beta = b {\rm Id}_{p\times p} $. By the assumption, $\beta \mathfrak{A}=\beta _1\mathfrak{A}=\mathfrak{P}_ \mathfrak{A}^{-n}$ so $e( \mathfrak{A})$ divides $n$. We define a character $\chi _{1}$ of $(1+ \mathfrak{p}_{F}^{ \frac{n}{e( \mathfrak{A})}})/(1+ \mathfrak{p}_{F}^{\frac{n}{e( \mathfrak{A})}+1})$: \begin{equation*}\chi _{1}( 1+z):= \psi (b  z),\quad\quad  \ \textrm{for}\quad    z\in \mathfrak{p}_{F}^{\frac{n}{e(\mathfrak{A})}}. \end{equation*}The determinant map induces the homomorphism: 
\begin{equation} \label{det}  U_{\mathfrak{A}}^{n}/ U_{\mathfrak{A}}^{n+1}\rightarrow (1+ \mathfrak{p}_{F}^{\frac{n}{e(\mathfrak{A})}})/(1+\mathfrak{p}_{F}^{\frac{n}{e( \mathfrak{A})}+1}).  \end{equation}  
Now we will show that $\chi _{1} \circ {\rm det}$ coincides with a character $\psi _{\beta} $ on $U_{\mathfrak{A}}^{n}/U_{\mathfrak{A}}^{n+1}$. For this see both $\psi _{ \beta }$ and $\chi _{1} \circ {\rm det}$ as characters of $U_{ \mathfrak{A}}^{n}$. Let 
$x\in \mathfrak{P}_{ \mathfrak{A}}^{n}$. By Leibniz formula ${\rm det} (1+x)=1+ \tr x+ y$ for some $y \in \mathfrak{p}_{F}^{2\frac{n}{e(\mathfrak{A})}}$. We have $b y \in \mathfrak{p}_{F}$ so 
\begin{equation*} \chi_{1} \circ {\rm det} (1+x)=\chi _{1}(1+\tr x +y)=\psi (b  (\tr x +y))= \psi( \tr (\beta x))= \psi _{\beta } (1+x). \end{equation*}Let $\chi _{2}$ be an extension of $\chi _{1}$ to $F^{ \times }$. Define $\chi (1+x):=\chi_{2}(1+x)^{-1}$. Then $\chi $ is a character which satisfies the desired property. 

For the converse assume that there exists a character $\chi $ of $F^{\times }$ such that $l(\chi \pi )<l(\pi )$. We look for a stratum $[\mathfrak{A},n,n-1, \beta ]$ equivalent to $[\mathfrak{A},n,n-1, \beta _{1}]$ such that $\beta$ is a scalar matrix. Denote $ \pi _{1}:= \chi \pi$. Denote by $\chi^{-1}$ the character of $F^{\times}$ such that $\chi ^{-1}(x)=\chi(x)^{-1}$ for every $x \in F^{\times}$.

The representation $\pi _{1} $ is irreducible and cuspidal. By Proposition \ref{7.13}, if $l( \pi _{1})>0$ then $\pi _{1} $ contains a simple stratum $[\mathfrak{A}_{1}, n_{1}, n_{1}-1, \gamma ]$ with $\mathfrak{A}_{1}$ principal. By the assumption, $l( \pi _{1})<l(\chi^{-1} \pi _{1})$ so $\chi ^{-1}\circ {\rm det} \otimes \psi _{\gamma }\mid _{U_{\mathfrak{A}_{1}}^{n_{1}+1}} \neq 1$. If $l(\pi _{1})=0$ then $\pi _{1}$ contains the trivial character of $\mathfrak{A}_{1}^{n_{1}+1}$ with $\mathfrak{A}_{1}= \mathfrak{M}$ and $n_{1}=0$. Therefore in both cases there exists $m \geqslant n_{1}+1$ such that \begin{equation} \label{chi} \chi^{-1} \circ {\rm det} \mid _{U_{\mathfrak{A}_{1}}^{m}} \neq 1 \quad \text{and} \quad \chi^{-1} \circ {\rm det} \mid _{U_{ \mathfrak{A}_{1}}^{m+1}} =1.  \end{equation} We can write $\chi ^{-1}\circ {\rm det} \mid _{U_{ \mathfrak{A}_{1}}^{m}} $ as $\psi _{\beta _{2}}$ for some $\beta _{2}\in \mathfrak{P}_{\mathfrak{A}_{1}}^{-m}/\mathfrak{P}_{\mathfrak{A}_{1}}^{-m+1}$. By Lemma \ref{determinantscalar} we can take $\beta _{2}$ to be a scalar matrix. 

%First we show that $e( \mathfrak{A}_{1})$ divides $m$. The order $\mathfrak{A}_{1}$ is principal so $e(\mathfrak{A}_{1})=1$ or $p$. If $e(\mathfrak{A}_{1})=1$ there is nothing to do so assume $e( \mathfrak{A}_{1})=p$. For any $m_{1} \in \mathbb{N}$, $m_{1} \geqslant 1$ and $x \in \mathfrak{P}_{ \mathfrak{A}_{1}}^{m_{1}}$ we have ${\rm det}(1+x) \in 1+ \mathfrak{p}_{F}^{\lfloor \frac{m_{1}-1}{p} \rfloor +1}$. We also have $1+ \mathfrak{p}_{F}^{\lfloor \frac{m_{1}-1}{p} \rfloor +1} \neq 1+ \mathfrak{p}_{F}^{\lfloor \frac{m_{1}}{p} \rfloor+1} $ if and only if $p$ divides $m_{1}$. By (\ref{chi}) this means $p$ divides $m$.

%By (\ref{det}) and (\ref{chi}), $\chi ^{-1}$ is trivial on $1+\mathfrak{p}_{F}^{\frac{m}{e( \mathfrak{A}_{1})} +1}$. Take $x \in \mathfrak{P}_{ \mathfrak{A}_{1}}^{m}$. Similarly as in the proof of the first implication we have \begin{equation*} \psi _{ \beta _{2}}(1+x)= \chi^{-1}( {\rm det}(1+x))= \chi^{-1}( 1+\tr x +y)= \chi ^{-1}(1+ \tr x)\end{equation*}  for some $y \in \mathfrak{p}_{F}^{\frac{m}{e( \mathfrak{A}_{1})}+1}$. By \cite[1.8 Proposition]{BH}, there exists $b_{2} \in \mathfrak{p}_{F}^{- \frac{m}{e( \mathfrak{A}_{1})}}$ such that we can assume that for any $x \in \mathfrak{P}_{\mathfrak{A}_{1}}^{m}$ we can take $\psi _{ \beta _{2}}(1+x)= \psi (b_{2}\tr x)$. Therefore we can assume that $\beta _{2}= b_{2} {\rm Id}_{p}$. 

To sum up we have proven that $\pi = \chi ^{-1} \pi _{1}$ contains a stratum $[\mathfrak{A}_{1}, m, m-1, \beta _{2} ]$. It is a fundamental stratum. By Lemma \ref{fundamentalintertwine}, the stratum $[ \mathfrak{A} _{1}, m, m-1, \beta_{2}]$ intertwines with $[ \mathfrak{A}, n, n-1, \beta _{1}]$. By \cite[2.6.1, 2.6.4]{BK} the stratum $[ \mathfrak{A}, n ,n-1, \beta _{1}]$ is equivalent to $[ \mathfrak{A} _{1}, m, m-1, \beta_{2}]$\footnote{$[\mathfrak{A}_{1},m,m-1,\beta_{2}]$ is a simple stratum in a sense of \cite{BK}.} . By the proof of \cite[1.5.2. Proposition]{BK}, $\mathfrak{A}_{1}=\mathfrak{A}$ and $m=n$. 

%\textbf{Case 2} Assume $l( \pi _{1})=0$. Then $ \pi _{1}$ contains a trivial character of $U_{ \mathfrak{A}_{1}}^{1}$ for some $\mathfrak{A}_{1}$ principal and $\chi ^{-1} \circ {\rm det} \mid _{ U_{ \mathfrak{A}_{1}}^{1}} \neq 1$. The argument similar to the one from Case 1 ends the proof of the lemma.  
\end{proof}   

\begin{lemma}[see Corollary 7.15 and Theorem 9.3, \cite{BKnotes}]\label{lemmasimpleimpliescusp}Let $\pi $ be an irreducible smooth representation of $G$ which contains a simple stratum $[\mathfrak{A},n,n-1,\beta ]$ with $n\geqslant 1$ and $\mathfrak{A}$ principal. Assume that for any character $\chi$ of $F^{\times}$ we have $l(\pi)\leqslant l(\chi \pi)$. Then $\pi$ is cuspidal and there exists a simple stratum $[\mathfrak{A},n,n-1,\beta ']$ equivalent to $[\mathfrak{A},n,n-1,\beta]$ such that $\pi\cong \cInd_{J}^{G}\Lambda_{0}$ where $J=F[\beta ']^{\times }U_{\mathfrak{A}}^{\lfloor \frac{n+1}{2}\rfloor }$ and $\Lambda_{0}\mid _{U_{\mathfrak{A}}^{\lfloor\frac{n}{2}\rfloor+1}}$ contains $\psi _{\beta '}$. \end{lemma}
\begin{proof}

By the assumption, $\pi $ contains the character $\psi _{\beta }$ of $U_{ \mathfrak{A}}^{n}$. There exists an extension $\psi _{ \beta '}$ of $\psi _{\beta }$ to $U_{\mathfrak{A}}^{\lfloor \frac{n}{2}\rfloor +1}$ which is also contained in $\pi $. We have $\beta ' \equiv \: \beta \: \left({\rm mod }\: \mathfrak{P}_{\mathfrak{A}}^{1-n} \right)$. Therefore $[\mathfrak{A},n,n-1,\beta ']$ is also a smiple stratum. By \cite[1.5.8 Theorem]{BK} the $G$-intertwining of $\psi _{\beta '}$ (see \cite[1.5.7]{BK}) is $J:=F[\beta ']^{\times } U_{\mathfrak{A}}^{\lfloor \frac{n+1}{2} \rfloor }$. Since $J$ is compact modulo $Z$ there exists an irreducible smooth representation $\Lambda $ of $J$ which is contained in $\pi $ and which contains $\psi _{\beta '}$ when restricted to $U_{\mathfrak{A}}^{\lfloor \frac{n}{2} \rfloor +1}$. By \cite[Theorem 11.4 and Remark 1]{BH}, $\cInd _{J}^{G} \Lambda $ is irreducible and cuspidal. By Frobenius reciprocity $\pi \cong \cInd _{J}^{G} \Lambda $.   
 \end{proof} 

\begin{proposition}{\cite[Theorem 3.2]{K}} \label{7.13}  
Let $\pi$ be an irreducible cuspidal representation of $G$. Then $l( \pi )=0$ or $\pi $ contains a simple stratum $[ \mathfrak{A}, n,  n-1, \beta ]$ with $\mathfrak{A}$ principal. 
  \end{proposition}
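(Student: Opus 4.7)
The plan is to produce, assuming $l(\pi)>0$, a fundamental stratum contained in $\pi$ at the minimal level, and then upgrade it to a simple stratum on a principal hereditary order, using cuspidality together with the fact that $\dim_{F}V=p$ is prime.

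First I would pick a pair $(\mathfrak{A},n)$ achieving the infimum in the definition of $l(\pi)$, so that $\pi$ contains the trivial character of $U_{\mathfrak{A}}^{n+1}$ but not of $U_{\mathfrak{A}}^{n}$. Restricting to the abelian quotient $U_{\mathfrak{A}}^{n}/U_{\mathfrak{A}}^{n+1}$ and invoking the trace pairing, $\pi$ then contains a nontrivial character of the form $\psi_{\alpha}$ for some $\alpha\in\mathfrak{P}_{\mathfrak{A}}^{-n}\setminus\mathfrak{P}_{\mathfrak{A}}^{1-n}$, giving a stratum $[\mathfrak{A},n,n-1,\alpha]$ in $\pi$. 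I claim this stratum is fundamental: otherwise $\alpha+\mathfrak{P}_{\mathfrak{A}}^{1-n}$ would contain a nilpotent matrix, and the classical lattice-chain refinement argument (cf.\ Bushnell--Kutzko 2.3--2.4) would produce a hereditary order $\mathfrak{A}'$ with $e(\mathfrak{A}')>e(\mathfrak{A})$ on which $\pi$ contains a stratum of strictly smaller normalised level, contradicting the minimality of $l(\pi)$.

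Next, I would show $\mathfrak{A}$ can be chosen principal and the fundamental stratum can be refined to a simple one. Cuspidality enters via a Jacquet/Hecke-module argument: if $\mathfrak{A}$ were non-principal, its lattice chain has jumps of unequal dimension, so $\mathfrak{A}/\mathfrak{P}_{\mathfrak{A}}$ splits as a product of matrix algebras of distinct sizes; the orbit of $\psi_{\alpha}$ under the parahoric then forces a nonzero Jacquet module of $\pi$ along a proper parabolic $P\subset G$, contradicting the cuspidality of $\pi$ (this is the core of Kutzko's argument in \cite{K}). We may therefore take $\mathfrak{A}$ principal, hence conjugate to $\mathfrak{M}$ or $\mathfrak{I}$ by Lemma \ref{principal}; then $e(\mathfrak{A})\in\{1,p\}$ and, since $p$ is prime, the characteristic polynomial modulo $\mathfrak{p}_{F}$ of $\varpi_{F}^{n}\alpha$ (resp.\ $\varpi_{F}^{\lfloor n/p\rfloor+1}\alpha$) has degree $p$ and, by fundamentality, is not $X^{p}$. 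A short case analysis, together with Lemma \ref{minimalleveltwisting} to exclude the scalar case (in which the level would drop under an unramified twist, contradicting minimality), reduces to one of the two situations in Proposition \ref{Propositionsimplestratum}, so after replacing $\alpha$ by an equivalent representative the stratum $[\mathfrak{A},n,n-1,\alpha]$ is simple.

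The main obstacle is the second step: ruling out non-principal hereditary orders in the classification of minimal-level fundamental strata contained in a cuspidal representation. This is where cuspidality genuinely intervenes, and it requires a careful intertwining analysis showing that on a non-principal order the character $\psi_{\alpha}$ is invariant under too large a unipotent subgroup to occur in a cuspidal representation. Once that is established, the refinement from fundamental to simple is routine in the prime-dimension setting, because Proposition \ref{Propositionsimplestratum} turns the question into a check of an irreducibility or an Eisenstein property of an explicit degree-$p$ residual polynomial.
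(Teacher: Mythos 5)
The paper does not prove this proposition; it is cited verbatim from Kutzko as \cite[Theorem 3.2]{K}, so there is no in-paper argument to compare your sketch against. That said, your proposal is worth examining on its own terms, and there are a few genuine gaps.

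Your first paragraph (pass to a minimal pair $(\mathfrak{A},n)$, extract a nontrivial $\psi_{\alpha}$, and observe that a non-fundamental stratum would permit lowering the normalised level via the lattice-chain refinement of BK 2.3) is the standard start and is fine. The difficulties begin in the second paragraph, where you locate the role of cuspidality. You assert that cuspidality rules out \emph{non-principal} hereditary orders, on the grounds that $\mathfrak{A}/\mathfrak{P}_{\mathfrak{A}}$ is a product of matrix algebras of distinct sizes and this ``forces a nonzero Jacquet module''. That is not the right dichotomy. The correct one is between \emph{split} and \emph{non-split} fundamental strata: a fundamental stratum is split when its residual (characteristic) polynomial in $k_F[X]$ has at least two coprime irreducible factors, and it is split strata — on any hereditary order, including principal ones with reducible residual polynomial not a power of one irreducible — that the Hecke-algebra/Jacquet-module argument excludes from cuspidal representations. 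Conversely, a non-principal order does not by itself contradict cuspidality; a non-split fundamental stratum on a non-principal order is refined, typically on a \emph{different} hereditary order, and the conclusion that one lands on a principal order comes at the end of the refinement, not as an a priori constraint.

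Your case analysis at the end also contains an error. For $\mathfrak{A}=\mathfrak{I}$ the residual characteristic polynomial of $\varpi_F^{\lfloor n/p\rfloor+1}\alpha$ is always $X^p$ modulo $\mathfrak{p}_F$ (the element lies in $\mathfrak{P}_{\mathfrak{I}}$), so ``by fundamentality, is not $X^p$'' is false in that case. Compare with the paper's Proposition \ref{Propositionsimplestratum}: on $\mathfrak{I}$, simplicity is captured by $\alpha$ being equivalent to $\Pi_{\mathfrak{I}}^{j}B$ with $0<j<p$ and $B\in U_{\mathfrak{I}}$, not by an Eisenstein or irreducibility property of the residual polynomial. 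Finally, Lemma \ref{minimalleveltwisting} excludes the case where the stratum is equivalent to a scalar one, but by itself it does not exclude a split fundamental stratum whose residual polynomial has several distinct roots; you still need the cuspidality input at that point. In summary, the correct skeleton is: minimality gives a fundamental stratum; cuspidality (via a Hecke-module argument) shows the stratum is non-split; non-split plus prime dimension $p$, together with the level-minimising twist of Lemma \ref{minimalleveltwisting}, forces the refined stratum to be simple and the order principal. Your sketch gestures at all of these ingredients but misplaces where cuspidality enters and misstates the residual-polynomial test on $\mathfrak{I}$.
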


Now we are ready to prove Theorem \ref{theoremclassificationcusrep}. 
\begin{proof}[Proof of Theorem \ref{theoremclassificationcusrep}] 
An irreducible smooth representation of $G$ whose normalized level is $0$ is cuspidal if and only if it is of the form $(1)$ by \cite[Theorem 8.4.1]{BK}.

Take an irreducible cuspidal representation $\pi $ of $G$ with $l(\pi )>0$. Assume that for any character $\chi $ of $F^{ \times }$ we have $l(\pi )\leqslant l( \chi \pi )$. By Proposition \ref{7.13}, $\pi $ contains a simple stratum $[\mathfrak{A},n ,n-1, \beta ]$ with $\mathfrak{A}$ principal. By Lemma \ref{minimalleveltwisting} and Lemma \ref{lemmasimpleimpliescusp}, $\pi $ is of the form as in $(2)$.

By Lemma \ref{lemmasimpleimpliescusp}, a representation $\pi $ such that $l(\pi)\leqslant l(\chi\pi)$ for every character $\chi$ of $F^{\times}$ and of the form $(2)$ is cuspidal. A one-dimensional twist of an irreducible cuspidal representation of $G$ is irreducible cuspidal.   \end{proof}

\end{subsection}

\begin{subsection}{Irreducible representations of $\GL_{p}(\mathcal{O}_{F})$ in terms of orbits}\label{orbits} 
Let $\rho $ be an irreducible smooth representation of $K=\GL_{p}(\mathcal{O}_{F})$ with conductor $r>1$. In this subsection we adjust the notation from a description of representations of $K$ as in subsection \ref{introductionorbits} to be more consistent with the notation from \cite{BH}.

Denote $l=\lfloor\frac{r+1}{2}\rfloor$ and $l'=r-l$. As in the subsection \ref{introductionorbits}, by Clifford's theorem
\begin{equation}
\label{decompmod}
\bar{\rho }\mid _{K^{l}}=m\bigoplus_{\bar{\alpha _{1}}\sim\bar{\alpha _{0}}}\bar{\varphi}_{\bar{\alpha _{1}}}
\end{equation}
for some matrix $\bar{\alpha _{0}}\in \M _{p}(\mathcal{O}_{F}/\mathfrak{p}_{F}^{l'})$, $m\in \mathbb{N}$, where $\bar{\alpha_{1}}$ runs over the conjugacy class of $\bar{\alpha _{0}}$ under $\GL_{p}(\mathcal{O}_{F}/\mathfrak{p}_{F}^{r-l})$-conjugation and $\bar{\varphi}_{\bar{\alpha_{1}}}:K^{l}\rightarrow\mathbb{C}^{\times}$ is defined as $\bar{\varphi}_{\bar{\alpha _{1}}}(1+x)=\psi(\varpi_{F} ^{-r+1}\tr (\alpha_{1}\widehat{x}))$ for some lifts $\alpha _{1}$ (resp. $\widehat{x}$) of $\bar{\alpha_{1}}$ (resp. $x$) to $ \M _{p}(\mathcal{O}_{F})$. The characters $\bar{\varphi}_{\bar{\alpha _{1}}}$ do not depend on choices of lifts. In our case it will be more convenient to look at $\rho $ as a representation of $K$ not $\GL_{p}(\mathcal{O}_{F}/\mathfrak{p}_{F}^{r})$. By (\ref{decompmod}) we can write 
\begin{equation}
\label {decomp}
\rho \mid _{U_{\mathfrak{M}}^{l}}=m\bigoplus _{\bar{\alpha _{1}}\sim \bar\alpha _{0}}\psi _{\varpi_{F} ^{-r+1}\alpha _{1}}
\end{equation}
where $\psi _{\varpi_{F} ^{-r+1}\alpha _{1}}:U_{\mathfrak{M}}^{l}\rightarrow \mathbb{C}^{ \times }$ and $\psi _{\varpi_{F} ^{-r+1}\alpha _{1}}(1+x)=\psi (\varpi_{F} ^{-r+1}\mathrm{tr}\alpha _{1}x)$. \\
%Note that if $\psi _{\varpi_{F} ^{-r+1}\alpha _{1}}$ defined as in subsection \ref{chain1} is a character of the group $U_{\mathfrak{M}}^{l}$ then $\varphi _{\varpi_{F} ^{-r+1}\alpha _{1}}=\psi _{\varpi_{F} ^{-r+1}\alpha _{1}}$. However if $\psi _{\varpi _{F}^{-r+1}\alpha _{1}}$ is a character of another group (for example $U_{\mathfrak{I}}^{m}$ with any $m\in \mathbb{N}$ and $m \geqslant 1$) the last equality does not hold. We introduce this notation to underline the importance of the group on which the character is defined.\\
For the sake of simplicity, the characters in the decomposition (\ref{decomp}) are indexed by matrices from $\M _{p}(\mathcal{O}_{F})$ instead of matrices in $\M _{p}(\mathcal{O}_{F}/\mathfrak{p}_{F}^{l'})$ as in (\ref{decompmod}) but we are still taking sums over the conjugacy class in $\M _{p}(\mathcal{O}/\mathfrak{p}_{F}^{l'})$.   
\\
We say that a representation $\rho $ contains a matrix $\alpha _{1}$ in its orbit if it admits the decomposition of the form (\ref{decomp}). We say that two orbits $\lbrace \alpha_{i}\rbrace _{i\in I}$ and $\lbrace \beta _{i}\rbrace  _{i\in J}$ are equivalent if $\lbrace \bar{\alpha} _{i}\rbrace _{i\in I}=\lbrace\bar{\beta} _{i}\rbrace _{i\in J}$ where $\bar{a}$ denotes the class of $a\in \M _{p}(\mathcal{O}_{F})$ in $\M _{p}(\mathcal{O}_{F}/\mathfrak{p}_{F}^{l'})$. Note that the notion of equivalence implicitly depends on $r$. From now on we consider the orbits up to equivalence.    
\\
\begin{remark} \label{rho}
By Clifford theory, if a representation $\rho $ admits the decomposition (\ref{decompmod}) then
\begin{equation*} 
\bar{\rho} \cong \mathrm{Ind} _{\mathrm{Stab}_{\GL_{p}(\mathcal{O}_{F}/\mathfrak{p}_{F}^{r})}\bar{\varphi } _{\bar{\alpha } _{1} }}^{\GL_{p}(\mathcal{O}_{F}/\mathfrak{p}_{F}^{r})}\bar{\theta } 
\end{equation*}
where $\bar{\theta }$ is an irreducible representation of $\mathrm{Stab}_{\GL_{p}(\mathcal{O}_{F}/\mathfrak{p}_{F}^{r})}\bar{\varphi } _{\bar{\alpha } _{1}}$ which contains $\bar{\varphi } _{\bar{\alpha } _{1}}$. Therefore as a representation of $K$, $\rho $ is isomorphic to $\mathrm{Ind}_{\mathrm{Stab}_{K}\bar{\varphi} _{\bar{\alpha} _{1}}}^{K} \theta $ where $\theta $ is an inflation of $\bar{\theta }$ to $\mathrm{Stab}_{K}\bar{\varphi} _{\bar{\alpha} _{1}}$.\end{remark}   
\end{subsection}
\end{section}

\begin{section}{Cuspidal types on $K$ in terms of orbits} 
\label{sectionproof} 
In this section we give a description of orbits of cuspidal types. We also determine which orbits provide cuspidal types under the condition that the conductor of a cuspidal type is at least $4$. This in particular allows us to determine which cuspidal types on $K$ with conductor at least $4$ are regular representations.

\begin{subsection}{Cuspidal types on $\GL_{p}( \mathcal{O} _{F})$} The goal of this subsection is to prove the following theorem.
\begin{theorem} \label{TheoremGLp}
If $\lambda $ is a cuspidal type on $K=\GL_{p}( \mathcal{O}_{F})$, then it is a one-dimensional twist of one of the following:
\begin{enumerate} \item a representation which is inflated from an irreducible cuspidal representation of $\GL_{p}(k_{F})$;
\item a representation whose orbit contains a matrix whose characteristic polynomial is irreducible modulo $ \mathfrak{p}_{F}$; 
\item a representation whose orbit contains a matrix of the form $\Pi _{ \mathfrak{I}}^{j} B$ where $0<j<p$ and $B \in U_{ \mathfrak{I}}$.\end{enumerate}
Moreover, if $\pi$ is an irreducible cuspidal representation of $G$ such that $l(\pi)\leqslant l(\chi\pi)$ for all one-dimensional characters $\chi:F^{\times }\to \mathbb{C}^{\times}$, then the cuspidal type for $\mathfrak{I}(\pi)$ is of the form $(1)$, $(2)$ or $(3)$ (no twisting is needed).
Conversely if a representation is a one-dimensional twist of a representation of the form $(3)$ and has conductor at least $4$, or is of the form $(1)$ or $(2)$, then it is a cuspidal type. \end{theorem} 

\begin{corollary}[{see section \ref{regularity}}] Let $\pi $ be an irreducible cuspidal representation of $G$ such that $l(\pi)\leqslant l(\chi\pi)$ for all one-dimensional characters $\chi:F^{\times}\to\mathbb{C}^{\times}$. The cuspidal type for $\mathfrak{I}(\pi )$ is regular if and only if $l(\pi )=m$ or $m-\frac{1}{p}$ for certain $m \in \mathbb{N}\setminus\{0\}$.  
\end{corollary}

Before the proof of Theorem \ref{TheoremGLp} we state auxiliary lemmas.  
\begin{lemma}
\label{lemma1}
Let $\mathfrak{A}=\mathfrak{M}$ or $\mathfrak{I}$ and let $J$ be an open compact modulo $Z$ subgroup of $\mathfrak{K}(\mathfrak{A})$. Denote by $J^{\circ}$ the maximal compact subgroup of $J$. Then $J^{\circ}=J\cap U_{\mathfrak{A}}=J\cap K$. 
\end{lemma}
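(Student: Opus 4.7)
The plan is to leverage the semidirect product decomposition $\mathfrak{K}(\mathfrak{A}) = U_{\mathfrak{A}} \rtimes \langle \Pi_{\mathfrak{A}} \rangle$ furnished by Lemma \ref{normalizer}. Since $\Pi_{\mathfrak{A}}$ generates an infinite cyclic group and $U_{\mathfrak{A}}$ is compact, $\mathfrak{K}(\mathfrak{A})$ is an extension of $\mathbb{Z}$ by a compact group, so any compact subgroup must lie in $U_{\mathfrak{A}}$. The assertion $J^{\circ} = J \cap U_{\mathfrak{A}} = J \cap K$ should then follow by (a) identifying $J \cap U_{\mathfrak{A}}$ with $J \cap K$ using the valuation of the determinant, and (b) noting that this already absorbs every compact subgroup of $J$.

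First I would check $J \cap U_{\mathfrak{A}} = J \cap K$. One inclusion is immediate: since both $\mathfrak{M}$ and $\mathfrak{I}$ consist of matrices with entries in $\mathcal{O}_F$, we have $U_{\mathfrak{A}} \subseteq \GL_p(\mathcal{O}_F) = K$, so $J \cap U_{\mathfrak{A}} \subseteq J \cap K$. For the reverse inclusion, take $g \in J \cap K$. By Lemma \ref{normalizer} we may write $g = u \Pi_{\mathfrak{A}}^n$ with $u \in U_{\mathfrak{A}}$ and $n \in \mathbb{Z}$. A direct computation gives $\nu_F(\det \Pi_{\mathfrak{M}}) = p$ and $\nu_F(\det \Pi_{\mathfrak{I}}) = 1$, i.e. $\nu_F(\det \Pi_{\mathfrak{A}}) = p/e(\mathfrak{A})$. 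Hence $\nu_F(\det g) = n\, p/e(\mathfrak{A})$. But $g \in K$ forces $\nu_F(\det g) = 0$, so $n = 0$ and $g = u \in U_{\mathfrak{A}}$.

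Next I would show that $J \cap U_{\mathfrak{A}}$ is the maximal compact subgroup of $J$. It is compact as a closed subgroup of the compact group $U_{\mathfrak{A}}$. For maximality, let $H \subseteq J$ be any compact subgroup. Composing the inclusion $H \hookrightarrow \mathfrak{K}(\mathfrak{A})$ with the projection $\mathfrak{K}(\mathfrak{A}) \twoheadrightarrow \mathfrak{K}(\mathfrak{A})/U_{\mathfrak{A}} \cong \mathbb{Z}$ yields a continuous image of $H$ in $\mathbb{Z}$; since $\mathbb{Z}$ is discrete, a compact subgroup must be trivial, so $H \subseteq U_{\mathfrak{A}}$, hence $H \subseteq J \cap U_{\mathfrak{A}}$. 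This proves that $J \cap U_{\mathfrak{A}}$ is a (unique) maximal compact subgroup of $J$, so $J^{\circ} = J \cap U_{\mathfrak{A}} = J \cap K$.

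I do not expect a serious obstacle; the argument is essentially structural. The hypotheses that $\mathfrak{A}$ is principal and that $J \subseteq \mathfrak{K}(\mathfrak{A})$ are used in two places only: to get the clean semidirect product in Lemma \ref{normalizer} and the containment $U_{\mathfrak{A}} \subseteq K$, and to make the quotient $\mathfrak{K}(\mathfrak{A})/U_{\mathfrak{A}}$ discretely cyclic so that the projection kills every compact subgroup.
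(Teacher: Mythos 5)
Your proof is correct and follows essentially the same structural strategy as the paper: invoke Lemma \ref{normalizer} to write $\mathfrak{K}(\mathfrak{A})=U_{\mathfrak{A}}\rtimes\langle\Pi_{\mathfrak{A}}\rangle$, then observe that the quotient by $U_{\mathfrak{A}}$ is an infinite discrete cyclic group, which forces every compact subgroup of $J$ into $U_{\mathfrak{A}}$. The one point of divergence is the step $J\cap U_{\mathfrak{A}}=J\cap K$: you establish it by an explicit determinant-valuation computation ($\nu_F(\det\Pi_{\mathfrak{A}})=p/e(\mathfrak{A})\neq 0$, so $g=u\Pi_{\mathfrak{A}}^n\in K$ forces $n=0$), whereas the paper argues more softly by compactness — since $J\cap K$ is compact it lies in $J^{\circ}=J\cap U_{\mathfrak{A}}$, and the reverse inclusion is $U_{\mathfrak{A}}\subseteq K$. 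Both are valid; the paper's version is slightly shorter, while yours makes the ``$n=0$'' mechanism visible. Neither route uses anything beyond Lemma \ref{normalizer} and elementary facts about compact groups, so this counts as the same approach rather than a genuinely different one.
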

\begin{proof}
By Lemma \ref{normalizer}, $\mathfrak{K}(\mathfrak{A})=\langle\Pi _{ \mathfrak{A}} \rangle\ltimes U_{\mathfrak{A}}$. Since any non-trivial subgroup of $\langle\Pi _{ \mathfrak{A}} \rangle$ is not compact,
 $J^{\circ}$ has to be contained in $U_{\mathfrak{A}}$ and $J^{\circ}\subseteq J\cap U_{\mathfrak{A}}$. The subgroup $J$ is closed and $U_{ \mathfrak{A}}$ is compact so $J \cap U_{ \mathfrak{A}}$ is compact. Therefore $J^{ \circ} = J \cap U_{ \mathfrak{A}}$. Since $J^{\circ}$ is the unique maximal compact subgroup of $J$ and $J\cap K$ is compact, $J \cap K \subseteq J^{\circ }$. We also have $U_{ \mathfrak{A}} \subseteq K$ and $J^{ \circ } =J \cap U_{ \mathfrak{A}} \subseteq J \cap K$. 
\end{proof} 
Let $H$ be a locally profinite group. Let $\pi _{1} $ and $\pi _{2}$ be representations of $H$. We write $\pi _{1} \sim \pi _{2}$ if there exists $h \in H$ such that $\pi _{1} = \pi _{2}^{h}$. The following is a variation on Clifford's theorem: 

\begin{proposition} \label{Clifford2} Let $H$ be a locally profinite group, $N$ a normal open compact subgroup of $H$ and let $(\pi _{1}, V_{1})$ be an irreducible admissible smooth representation of $H$. Then \begin{equation*} \pi _{1} \mid _{N} = m\bigoplus _{\rho _{1} \sim \rho } \rho _{1} \end{equation*} for certain $m \in \mathbb{Z}$ and some irreducible smooth representation $\rho$ of $N$. 
\end{proposition}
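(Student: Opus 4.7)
The plan is to treat this as the classical Clifford decomposition transplanted to the locally profinite setting, with the compactness of $N$ taking the place of finiteness in the group-algebra version.

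First, I would show that $\pi_1|_N$ is semisimple. Since $\pi_1$ is smooth, every vector $v\in V_1$ has an open stabilizer in $N$, which is of finite index in the compact group $N$, so the $N$-orbit of $v$ is finite and the cyclic $N$-subrepresentation $\langle N\cdot v\rangle$ is finite-dimensional. Applying Maschke's theorem to each such finite-dimensional $N$-subspace (these $N$-actions factor through finite quotients) shows $V_1|_N$ is a semisimple $N$-module. Writing $\widehat N$ for the set of isomorphism classes of irreducible smooth representations of $N$, I obtain the isotypic decomposition
$$V_1\big|_N \;\cong\; \bigoplus_{\sigma\in\widehat N} V_1(\sigma),$$
and admissibility of $\pi_1$ ensures each isotypic component $V_1(\sigma)$ carries finite multiplicity (since the irreducible smooth representations of the profinite group $N$ are finite-dimensional, so $V_1(\sigma)^\sigma$ is fixed by some open compact subgroup of $H$).

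Next, I would exploit the normality of $N$ to define an $H$-action on $\widehat N$ via $\sigma\mapsto\sigma^h$, where $\sigma^h(n):=\sigma(hnh^{-1})$, and observe that for each $h\in H$ the operator $\pi_1(h)$ restricts to a linear isomorphism $V_1(\sigma)\xrightarrow{\sim}V_1(\sigma^{h^{-1}})$. In particular, for any $H$-orbit $\mathcal O\subseteq \widehat N$, the subspace $V_{\mathcal O}:=\bigoplus_{\sigma\in\mathcal O}V_1(\sigma)$ is an $H$-subrepresentation of $V_1$. By irreducibility of $\pi_1$ we must have $V_{\mathcal O}=0$ or $V_{\mathcal O}=V_1$, so a single $H$-orbit $\mathcal O$ in $\widehat N$ accounts for the whole restriction.

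Finally, the same isomorphism $\pi_1(h):V_1(\sigma)\xrightarrow{\sim}V_1(\sigma^{h^{-1}})$ identifies $\sigma$-isotypic and $\sigma^{h^{-1}}$-isotypic components as vector spaces, and since isotypic multiplicities equal $\dim\mathrm{Hom}_N(\sigma,V_1)$, they are constant along the orbit $\mathcal O$; call this common value $m$. Choosing any representative $\rho$ of $\mathcal O$ and rewriting the sum as $\rho_1$ ranging over $\mathcal O=\{\rho^h:h\in H\}$ yields
$$\pi_1\big|_N \;\cong\; m\bigoplus_{\rho_1\sim\rho}\rho_1,$$
which is the desired decomposition. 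The only real issue is the semisimplicity of $V_1|_N$; once one observes that smoothness plus compactness of $N$ reduces the question to Maschke on finite-dimensional pieces, the remainder of the argument is a purely formal orbit-counting exercise identical to the classical Clifford theorem.
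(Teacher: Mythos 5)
Your proof is correct and follows essentially the same route as the paper: isotypic decomposition of the semisimple $N$-module $V_1|_N$, irreducibility plus normality of $N$ to confine everything to a single $H$-orbit, and admissibility (via an open kernel inside $N$) to bound the multiplicity. The only divergence is that you prove the semisimplicity of $V_1|_N$ from scratch via the Maschke-on-finite-quotients argument, whereas the paper simply cites \cite[2.3 Proposition]{BH} for that fact; you also make explicit the constancy of the multiplicity along the orbit, which the paper leaves implicit.
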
 
\begin{proof} Denote by $\widehat{N}$ the set of equivalence classes of irreducible smooth representations of $N$. Let $\rho \in \widehat{N}$. The $\rho$-isotypic component of $V_{1}$ is the sum of all irreducible $N$-subspaces of $V_{1}$ of class $\rho $. We denote it by $V_{1}^{\rho }$. By \cite[2.3 Proposition]{BH}, 
\begin{equation*} V_{1}= \bigoplus _{ \rho \in \widehat{N}}V _{1}^{\rho }. \end{equation*} 
Fix some $\rho \in \widehat{N}$. Since $\pi _{1}$ is irreducible we have $V_{1}=\sum _{ g \in G} g V _{1}^{ \rho }= \sum _{ g \in G} V _{1}^{\rho ^{g}}= \bigoplus _{ \rho _{1} \sim \rho }V _{1}^{ \rho _{1}}$. Since $\pi _{1}$ is admissible and $\textrm{ker}(\rho )$ is open $V _{1}^{\rho} \subseteq V_{1}^{ \textrm{ker} ( \rho )}$ is finite dimensional hence $V _{1}^{ \rho } \cong m \rho$ for certain $m \in \mathbb{Z}$. Therefore $\pi _{1} \mid _{N} =m \bigoplus _{\rho _{1} \sim \rho } \rho _{1}$.  
\end{proof} 
\begin{remark} 
\label{Cliffordremark}   
If $H \subseteq \GL _{p}(F)$ is compact modulo $Z$ then by \cite[Theorem 2.1]{Mosk} every irreducible smooth representation of $H$ is finite dimensional and hence admissible.  
\end{remark}

\begin{lemma} 
\label{extensionofcuspidaltype} 
Let $U$ be a compact open subgroup of $K$ and let $\pi $ be an irreducible cuspidal representation of $G$. Let $\rho ' $ be a cuspidal type on $U$ for $\mathfrak{I}( \pi )$ and let $\rho $ be an irreducible smooth representation of $K$ which contains $\rho ' $. Assume that $\rho $ is contained in $\pi \mid _{K}$. Then $\rho $ is a cuspidal type on $K$ for $\mathfrak{I}(  \pi )$. 
\end{lemma}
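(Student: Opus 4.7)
The plan is to verify both implications in the definition of a cuspidal type directly, using two clean observations: restriction from $K$ to $U$ transmits the containment of $\rho'$, and unramified twists act trivially on $K$.

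For the easy direction, suppose $\pi_1$ is an irreducible smooth representation of $G$ such that $\pi_1 \mid_K$ contains $\rho$. Since $\rho$ is a representation of $K$ which contains $\rho'$ upon restriction to $U$, we have $\rho' \subseteq \rho \mid_U \subseteq (\pi_1 \mid_K) \mid_U = \pi_1 \mid_U$. By the hypothesis that $\rho'$ is a cuspidal type on $U$ for $\mathfrak{I}(\pi)$, this containment forces $\mathfrak{I}(\pi_1) = \mathfrak{I}(\pi)$.

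For the converse, assume $\mathfrak{I}(\pi_1) = \mathfrak{I}(\pi)$. Then by the definition of the inertial support there exists an unramified character $\chi$ of $F^{\times}$ such that $\pi_1 \cong \pi \otimes (\chi \circ \det)$. The key point is that $\chi$ is trivial on $\mathcal{O}_F^{\times}$ (this is the very meaning of being unramified), and $\det(K) \subseteq \mathcal{O}_F^{\times}$, so the character $\chi \circ \det$ restricts to the trivial character on $K$. Hence $\pi_1 \mid_K \cong \pi \mid_K$, and since by assumption $\rho$ is contained in $\pi \mid_K$, it is also contained in $\pi_1 \mid_K$.

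Combining the two directions yields exactly the defining property of a cuspidal type on $K$ for $\mathfrak{I}(\pi)$. No serious obstacle is expected; the only subtlety worth flagging is keeping track that ``$\rho$ contains $\rho'$'' is used as a relation between a $K$-representation and a $U$-representation (i.e.\ that $\rho \mid_U$ contains $\rho'$), and that the triviality of $\chi \circ \det$ on $K$ is the precise manifestation of $\chi$ being unramified. Both are immediate, so the argument is essentially a two-line verification once these facts are isolated.
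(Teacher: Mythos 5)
Your proof is correct and follows exactly the same two-step argument as the paper: restrict through $U$ to reduce the forward direction to the hypothesis on $\rho'$, and use $\pi_1 \mid_K \cong \pi\mid_K$ (from the unramified twist being trivial on $K$) for the converse. The only addition is that you spell out why $\chi\circ\det$ vanishes on $K$, which the paper leaves implicit.
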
 
\begin{proof} 
Take an irreducible smooth representation $\pi _{1}$ of $G$. We want to show that $\pi _{1}\mid _{K}$ contains $\rho $ if and only if $\mathfrak{I}(\pi _{1} )= \mathfrak{I}( \pi )$. If $\pi _{1} \mid _{K}$ contains $\rho $ then it also contains $\rho '$ and by the assumption $\mathfrak{I}( \pi )= \mathfrak{I}( \pi _{1})$. For the reverse implication assume that $\mathfrak{I}( \pi _{1})=\mathfrak{I}( \pi )$. Since $\pi _{1} \cong \pi \otimes \chi \circ {\rm det}$ for a certain unramified character $\chi $ of $F^{\times }$, $\pi _{1} \mid _{K} \cong \pi \mid _{K} $. Therefore $\pi _{1}$ contains $\rho $.    
\end{proof}        

\begin{lemma}\label{Lambdarestrictedirreducible} Consider a stratum $[ \mathfrak{A},n, n-1, \alpha ]$ with $\mathfrak{A}$ principal, $n \geqslant 1$. Denote $J= F[ \alpha ]^{ \times } U_{ \mathfrak{A}}^{ \lfloor \frac{n+1}{2} \rfloor}$ and let $ \pi $ be an irreducible cuspidal representation such that $\pi \cong \cInd _{J}^{G} \Lambda $ for some irreducible smooth representation $\Lambda$ of $J$. Denote by $J^{ \circ }$ the maximal compact subgroup of $J$. Then $\Lambda \mid _{J^{ \circ}}$ is irreducible.  
\end{lemma}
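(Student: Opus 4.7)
The plan is as follows. By Lemma \ref{lemma1} we have $J^{\circ}=J\cap U_{\mathfrak{A}}$; combining this with $E^{\times}\subseteq \mathfrak{K}(\mathfrak{A})$ from Lemma \ref{normalizercontainsE} and writing $J = E^{\times} U_{\mathfrak{A}}^{\lfloor (n+1)/2 \rfloor}$, one sees that $J/J^{\circ}$ is infinite cyclic, generated by the class of a uniformizer $\varpi_{E}$ of $E := F[\alpha]$. Since $J^{\circ}$ is normal in $J$, Proposition \ref{Clifford2} (justified by Remark \ref{Cliffordremark}) yields a decomposition
\begin{equation*}
\Lambda|_{J^{\circ}} \;\cong\; m\bigoplus_{i=0}^{k-1}\tau^{\varpi_{E}^{i}}
\end{equation*}
for some irreducible smooth representation $\tau$ of $J^{\circ}$, a multiplicity $m\in\mathbb{N}$, and an integer $k\geqslant 1$ such that the summands are pairwise inequivalent.

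The main step is to show that $k=1$, i.e.\ that $\tau$ is fixed by conjugation by $\varpi_{E}$. Set $N:=U_{\mathfrak{A}}^{\lfloor n/2\rfloor+1}$; it is abelian, normalized by $J$, and the character $\psi_{\alpha}$ of $N$ is $J$-stable: $E^{\times}$ centralizes $\alpha$, and $U_{\mathfrak{A}}^{\lfloor(n+1)/2\rfloor}$ commutes with $\alpha$ modulo $\mathfrak{P}_{\mathfrak{A}}^{1-n+\lfloor(n+1)/2\rfloor}$ by the standard commutator estimate for simple strata (cf.\ \cite[1.5.8]{BK}), hence fixes $\psi_{\alpha}$. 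By the hypothesis $\Lambda|_{N}$ contains $\psi_{\alpha}$, and by irreducibility of $\Lambda$ over $J$ together with $J$-stability of $\psi_{\alpha}$, the restriction $\Lambda|_{N}$ is $\psi_{\alpha}$-isotypic; consequently each $\tau^{\varpi_{E}^{i}}|_{N}$ is $\psi_{\alpha}$-isotypic. The uniqueness (up to isomorphism) of the irreducible representation of $J^{\circ}$ whose restriction to $N$ is $\psi_{\alpha}$-isotypic --- which is the Heisenberg/$\beta$-extension uniqueness for simple strata in \cite{BK,BKnotes} --- forces $\tau^{\varpi_{E}}\cong \tau$, hence $k=1$.

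With $k=1$ the representation $\tau$ is $J$-stable. Since $J/J^{\circ}\cong \mathbb{Z}$ is free, the obstruction in $H^{2}(J/J^{\circ},\mathbb{C}^{\times})$ to extending $\tau$ to $J$ vanishes, so there is an extension $\widetilde{\tau}$ to $J$; standard Clifford--Mackey theory then shows that the irreducible representations of $J$ whose restriction to $J^{\circ}$ is $\tau$-isotypic are exactly $\widetilde{\tau}\otimes \chi$ for $\chi$ a character of $J/J^{\circ}$, each of which has multiplicity one on $J^{\circ}$. Therefore $m=1$, and $\Lambda|_{J^{\circ}}\cong \tau$ is irreducible.

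The main obstacle is the step $k=1$: the requisite uniqueness of the Heisenberg representation on $J^{\circ}$ extending $\psi_{\alpha}$ is not stated explicitly in the excerpt and must be invoked from the Bushnell--Kutzko machinery, or else be argued directly from the explicit description of simple strata supplied by Proposition \ref{Propositionsimplestratum}; once it is in place the remaining multiplicity count is formal.
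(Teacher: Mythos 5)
Your approach is genuinely different from the paper's: the paper establishes the lemma by invoking Paskunas' uniqueness theorem for cuspidal types (Theorem \ref{Paskunas}) together with Lemma \ref{extensionofcuspidaltype}, showing that for any two irreducible constituents $\lambda_{i}$, $\lambda_{l}$ of $\Lambda\mid_{J^{\circ}}$ the induced representations $\cInd_{J^{\circ}}^{K}\lambda_{i}$ and $\cInd_{J^{\circ}}^{K}\lambda_{l}$ must coincide with the unique cuspidal type in $\pi\mid_{K}$, and then using the irreducibility of $\cInd_{J}^{G}\Lambda$ to transfer the resulting $K$-intertwining back into $J$. Your strategy instead tries to work entirely inside $J$, via Clifford theory for $J^{\circ}\trianglelefteq J$ and a claimed uniqueness of the $J^{\circ}$-representation over $\psi_{\alpha}$.

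The step you flag as "the main obstacle" is in fact where the argument fails, and more seriously than you suggest: the statement you invoke is not merely unavailable in the excerpt, it is false. The irreducible smooth representation of $J^{\circ}$ whose restriction to $N=U_{\mathfrak{A}}^{\lfloor n/2\rfloor+1}$ is $\psi_{\alpha}$-isotypic is \emph{not} unique up to isomorphism. Indeed, if $\tau$ is any such representation, then so is $\tau\otimes\chi$ for any nontrivial character $\chi$ of $J^{\circ}/N$, and $J^{\circ}/N$ is nontrivial — for instance it surjects onto $k_{E}^{\times}$ via $\mathcal{O}_{E}^{\times}\subseteq J^{\circ}$. The Heisenberg uniqueness in Bushnell--Kutzko gives uniqueness of $\eta$ on the pro-$p$ group $J^{1}=U^{1}_{E}U_{\mathfrak{A}}^{\lfloor(n+1)/2\rfloor}$ over the simple character of $H^{1}$, not uniqueness on $J^{\circ}$ over $\psi_{\alpha}$; passing from $J^{1}$ to $J^{\circ}$ introduces a $k_{E}^{\times}$-worth of twists, and passing from $\psi_{\alpha}$ on $N$ to a simple character on $H^{1}$ introduces further ambiguity. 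So the inference $\tau^{\varpi_{E}}\cong\tau$ cannot be drawn from a uniqueness statement in this way. The conclusion $k=1$ is correct, but a valid justification would have to argue directly that conjugation by $\varpi_{E}$ fixes $\tau$ — for $n$ odd one can check this by hand, since $\varpi_{E}$ centralizes $\mathcal{O}_{E}^{\times}$ and fixes $\psi_{\alpha}$ on $N$ and $J^{\circ}=\mathcal{O}_{E}^{\times}N$; for $n$ even one must combine the $J^{1}$-Heisenberg uniqueness with the triviality of the $\varpi_{E}$-action on $J^{\circ}/J^{1}\cong k_{E}^{\times}$. None of this is contained in your appeal. By contrast, the paper's route via Paskunas and the irreducibility of $\cInd_{J}^{G}\Lambda$ side-steps these local computations entirely, which is why it works uniformly in $n$.

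The remainder of your proof — the vanishing of $H^{2}(\mathbb{Z},\mathbb{C}^{\times})$, the extension $\widetilde{\tau}$, and the Clifford--Mackey multiplicity count giving $m=1$ once $k=1$ is known — is sound. But as it stands the argument is incomplete because the crucial step $k=1$ rests on a false uniqueness claim.
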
  
\begin{proof}
Let $\Lambda \mid _{J^{ \circ }}= \sum _{i \in I} \lambda _{i}$ for a certain set $I$ and irreducible representations $\lambda _{i}$ of $J^{ \circ}$. First we show that for any $i, l \in I$ we have $\lambda _{i} \cong \lambda _{l}$. Take $i, l \in I$. By Proposition \ref{BKprop}, the representations $\lambda _{i}$ and $\lambda _{l}$ are cuspidal types on $J^{\circ }$ for $\mathfrak{I}(\pi )$. By Lemma \ref{extensionofcuspidaltype}, irreducible components of $\cInd _{J^{\circ }}^{K} \lambda _{i}$ and $\cInd _{J^{\circ }}^{K} \lambda _{l}$ are cuspidal types on $K$ for $\mathfrak{I}(\pi )$. By Theorem \ref{Paskunas}, a cuspidal type on $K$ for $\mathfrak{I}(\pi )$ is unique and appears in $\pi $ with multiplicity one so $\cInd _{J^{\circ }}^{K}\lambda _{i} $ and $\cInd _{J^{\circ }}^{K} \lambda _{l}$ are irreducible and $\cInd _{J ^{\circ }}^{K} \lambda _{i} \cong \cInd _{ J^{ \circ }}^{K} \lambda _{l}$. By Frobenius reciprocity and Mackey's formula, this implies that there exists $k \in K$ which intertwines $\lambda _{l}$ with $\lambda _{i}$, i.e. 
\begin{equation}\label{lambdaintertwine}\textrm{Hom} _{J^{ \circ } \cap (J^{ \circ })^{k}}\left(\lambda _{l}^{k} \mid _{J^{\circ } \cap (J^{ \circ})^{k}}, \lambda _{i} \mid _{J^{ \circ } \cap (J^{ \circ })^{k}}\right)\neq 0. \end{equation} 
On the other hand we can apply Proposition \ref{Clifford2} and Remark \ref{Cliffordremark} to the representation $\Lambda$. The group $J^{\circ }$ is the maximal compact subgroup of $J$ so after $J$-conjugation it remains the maximal compact subgroup of $J$. Therefore $J^{ \circ }$ is a normal subgroup of $J$. By Proposition \ref{Clifford2}, there exists $j \in J$ such that $ \lambda _{l} \cong \lambda _{i} ^{j}$. Together with (\ref{lambdaintertwine}) this implies 
\begin{equation*} \textrm{Hom}_{(J^{ \circ })^{k}\cap J^{ \circ }} \left( \lambda _{i}^{jk} \mid _{(J^{ \circ})^{k} \cap J^{ \circ }}, \lambda _{i} \mid _{(J^{ \circ })^{k} \cap J^{ \circ}} \right) \neq 0. 
\end{equation*}  Since $J^{ \circ }$ is normal in $J$ we have $(J ^{ \circ })^{jk}=(J^{ \circ })^{k}$ and $jk$ intertwines $\lambda _{i}$. In particular, $jk$ intertwines $\Lambda $. An element from $G$ intertwines $\Lambda $ with itself if and only if it belongs to $J$ as otherwise $\cInd _{J}^{G} \Lambda $ would not be irreducible (see \cite[11.4 Theorem and 11.4 Remark 1,2]{BH}). This means that $jk \in J$, so $k \in K \cap J=J ^{ \circ}$. By (\ref{lambdaintertwine}), $\lambda _{i} \cong \lambda _{l}$. 

We proved $\Lambda \mid _{J^{ \circ }}=m \lambda _{i}$ for some $m \in \mathbb{Z}$. By Mackey formula, $\pi \mid _{K}$ contains $\cInd _{J ^{ \circ }}^{K}( \Lambda \mid _{J ^{ \circ }})=m \cInd _{J^{ \circ }}^{K} \lambda _{i}$. The representation $\cInd _{J^{ \circ }}^{K} \lambda _{i}$ is a cuspidal type for $\mathfrak{I}(\pi )$ on $K$. By Theorem \ref{Paskunas}, $m=1$ so $\Lambda \mid _{ J^{ \circ }}= \lambda _{i}$ is irreducible.   

\end{proof}
By Proposition \ref{BKprop}, Lemma \ref{extensionofcuspidaltype} and Lemma \ref{Lambdarestrictedirreducible} we know the following.   

\begin{lemma}[cf. {\cite[Proposition 3.1]{P}}]\label{inductioniscuspidaltype}  
Let $\pi $ and $\Lambda $ be as in Lemma \ref{Lambdarestrictedirreducible}. Then $\cInd _{J^{\circ}}^{K}(\Lambda \mid _{J^{\circ }})$ is a cuspidal type on $K$ for $\mathfrak{I} (\pi )$.
\end{lemma}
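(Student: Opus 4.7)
The plan is to glue together the irreducibility statement of Lemma \ref{Lambdarestrictedirreducible}, the cuspidal-type transfer of Proposition \ref{BKprop}, the extension principle of Lemma \ref{extensionofcuspidaltype}, and the unicity/multiplicity-one assertion of Theorem \ref{Paskunas}. Most of the real analytic work has already been done in Lemma \ref{Lambdarestrictedirreducible}; what remains is essentially a bookkeeping argument.

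First I would invoke Lemma \ref{Lambdarestrictedirreducible} to conclude that $\Lambda \mid_{J^\circ}$ is an irreducible representation of $J^\circ$. Applying Proposition \ref{BKprop} to the pair $(J,\Lambda)$, any irreducible component of $\Lambda\mid_{J^\circ}$ — hence $\Lambda\mid_{J^\circ}$ itself — is a cuspidal type on $J^\circ$ for $\mathfrak{I}(\pi)$.

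Next I would check that $\cInd_{J^\circ}^K(\Lambda\mid_{J^\circ})$ embeds as a subrepresentation of $\pi\mid_K$. Applying the Mackey decomposition to $(\cInd_{J}^{G}\Lambda)\mid_{K}$, the trivial double coset contributes the summand $\cInd_{K\cap J}^{K}(\Lambda\mid_{K\cap J})$, and by Lemma \ref{lemma1} we have $K \cap J = J^\circ$, so this summand is exactly $\cInd_{J^\circ}^{K}(\Lambda\mid_{J^\circ})$.

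Finally, let $\rho$ be any irreducible subrepresentation of $\cInd_{J^\circ}^{K}(\Lambda\mid_{J^\circ})$. By Frobenius reciprocity $\rho\mid_{J^\circ}$ contains the irreducible representation $\Lambda\mid_{J^\circ}$, and by the previous step $\rho$ is contained in $\pi\mid_K$; Lemma \ref{extensionofcuspidaltype} then promotes $\rho$ to a cuspidal type on $K$ for $\mathfrak{I}(\pi)$. By Theorem \ref{Paskunas} such a type is unique up to isomorphism and occurs in $\pi\mid_K$ with multiplicity one; comparing multiplicities forces $\cInd_{J^\circ}^{K}(\Lambda\mid_{J^\circ})$ itself to be irreducible and isomorphic to $\rho$, hence a cuspidal type. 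The only nontrivial step is the irreducibility input from Lemma \ref{Lambdarestrictedirreducible}; once that is accepted, the lemma follows formally.
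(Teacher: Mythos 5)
Your proof is correct and follows essentially the same route the paper takes: the paper cites \cite[Proposition 3.1]{P} but remarks that the lemma follows "by the above lemma and by \cite{P}," and indeed the argument you give is exactly the subargument already embedded in the proof of Lemma \ref{Lambdarestrictedirreducible} (Mackey to place $\cInd_{J^\circ}^K(\Lambda|_{J^\circ})$ inside $\pi|_K$, Proposition \ref{BKprop} to make $\Lambda|_{J^\circ}$ a cuspidal type on $J^\circ$, Lemma \ref{extensionofcuspidaltype} to promote any irreducible constituent of the induced representation to a cuspidal type on $K$, and Theorem \ref{Paskunas} for uniqueness and multiplicity one to force irreducibility). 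You correctly invoke Lemma \ref{lemma1} to identify $J \cap K$ with $J^\circ$ and use semisimplicity of finite-dimensional smooth representations of the compact group $K$ to convert "occurs as a quotient/sub" statements cleanly; the resulting derivation from the \emph{statement} of Lemma \ref{Lambdarestrictedirreducible} is exactly how the paper intends the lemma to be read off.
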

Now we are ready to prove Theorem \ref{TheoremGLp}
%\begin{proof}
%First we show that ${\rm Ind}_{J^{\circ}}^{K}(\Lambda\mid _{J^{\circ}})$ is irreducible. By Proposition \ref{BKprop} and Lemma \ref{Lambdarestrictedirreducible} the representation $\Lambda \mid _{J^{\circ }}$ is a cuspidal type on $J^{\circ }$ for $\mathfrak{I}( \pi )$. By Frobenius reciprocity and Lemma \ref{extensionofcuspidaltype} every irreducible component of $\cInd _{J^{\circ}}^{K} (\Lambda \mid _{J^{\circ }})$ is a cuspidal type on $K$ for $\mathfrak{I}( \pi )$. By Theorem \ref{Paskunas} a cuspidal type on $K$ for $\mathfrak{I}(\pi )$ is unique and appears in $\pi $ with multiplicity one so $\cInd _{J^{\circ }}^{K}(\Lambda \mid _{J^{\circ }})$ is irreducible. By Frobenius reciprocity, for any $\pi _{1}$ irreducible representation of $G$,  ${\rm Hom}_{J^{\circ}}(\Lambda\mid_{J^{\circ}},\pi _{1}\mid _{J^{\circ }})\neq 0$ if and only if ${\rm Hom}_{K}({\rm Ind}_{J^{\circ}}^{K}(\Lambda\mid_{J^{\circ}}),\pi _{1}\mid _{K})\neq 0$.
%\end{proof}

\begin{proof}[Proof of Theorem \ref{TheoremGLp}]First we prove that if a representation is a cuspidal type on $K$ for $\mathfrak{I}(\pi)$ where $\pi$ is an irreducible cuspidal representation of $G$ such that $l(\pi)\leqslant l(\chi\pi)$ for every one-dimensional character $\chi:F^{\times}\to\mathbb{C}^{\times}$, then it is of the form $(1)$, $(2)$ or $(3)$ from Theorem \ref{TheoremGLp}. 

%Let $\lambda $ be a cuspidal type on $K$ for $\mathfrak{I} ( \pi )$ where $\pi$ is an irreducible cuspidal representation of $G$. Let $\chi$ be a one-dimensional character of $F^{ \times }$. A representation $\lambda$ is a cuspidal type on $K$ if and only if $\chi \lambda$ is a cuspidal type on $K$. 
Let $\lambda$ be the cuspidal type on $K$ for $\mathfrak{I}(\pi)$ where $\pi$ is an irreducible cuspidal representation of $G$ such that $l(\pi)\leqslant l(\chi\pi)$ for every one-dimensional character $\chi:F^{\times}\to\mathbb{C}^{\times}$. By Remark \ref{remarkminimallevel}, we can restrict our considerations to the following two cases.

%assume that either $l(\pi )=0$ or $l(\pi )>0$ and it contains a simple stratum $[\mathfrak{A},n,n-1,\alpha ]$ with $n>0$, $\mathfrak{A}$ principal and such that   
%\begin{equation}
%\label{class}
%\pi\cong\cInd _{J}^{G}\Lambda ,
%\end{equation}where $J=F[\alpha ]^{\times }U_{\mathfrak{A}}^{\lfloor \frac{n+1}{2}\rfloor}$ 

% and $\Lambda\mid _{U_{\mathfrak{A}}^{\lfloor\frac{n}{2}\rfloor +1 }}=m \psi _{\beta}$ for some $m\in \mathbb{N}$ and an extension $\psi _{\beta }$ of $\psi _{\alpha }$ to $U_{ \mathfrak{A}}^{\lfloor \frac{n}{2}\rfloor +1}$. We have $\beta \equiv \alpha \: {\rm mod} \: \mathfrak{P}_{\mathfrak{A}}^{1-n}$ and the stratum $[\mathfrak{A},n,n-1, \alpha ]$ is equivalent to $[ \mathfrak{A},n,n-1, \beta ]$. Therefore without loss of generality we can take $\beta = \alpha $ and consider $\psi _{ \alpha }$ as a character of $U_{ \mathfrak{A}}^{\lfloor \frac{n}{2} \rfloor +1}$.  By Remark \ref{remarkconjugation}, we can assume $\mathfrak{A}=\mathfrak{M}$ or $\mathfrak{A}=\mathfrak{I}$.

\textbf{Case 1}, $l(\pi )=0$. By Theorem \ref{theoremclassificationcusrep}, there exists a representation $\Lambda$ of $ZK$ which is an extension of an inflation of some irreducible cuspidal representation of $\GL_{p}(k_{F})$, such that $\pi\cong \cInd _{ZK}^{G}\Lambda$. By Proposition \ref{BKprop}, $\Lambda\mid _{K}$ is a cuspidal type on $K$ for $\mathfrak{I}(\pi) $. By Paskunas' uniqueness theorem (\cite{P}, Theorem 1.3), $\lambda\cong\Lambda\mid _{K}$. Therefore $\lambda $ is of type $(1)$ from Theorem \ref{TheoremGLp}.%inflated from an irreducible cuspidal representation of $\GL_{2}(k_{F})$.

\textbf{Case 2}, $l(\pi)>0$ and $\pi$ contains a simple stratum $[\mathfrak{A},n,n-1,\alpha ]$ with $n\geqslant 1$, a principal order $\mathfrak{A}$, such that $\pi\cong\cInd _{J}^{G}\Lambda $ where $J=F[\alpha ]^{\times }U_{\mathfrak{A}}^{\lfloor\frac{n+1}{2}\rfloor }$
and $\Lambda\mid _{U_{\mathfrak{A}}^{\lfloor\frac{n}{2}\rfloor +1 }}=m \psi _{\alpha}$ for some $m\in \mathbb{N}$. The subgroup $J$ is open, contains and is compact modulo $Z$. Since $\alpha $ is minimal, $J\subseteq \mathfrak{K}(\mathfrak{A})$. Recall that $J^\circ=J\cap K$.
%The proof in this case will contain two steps. In the first step we will show that $\lambda\mid _{U_{\mathfrak{A}}^{[\frac{n}{2}]+1}}$ contains $\psi _{\alpha }$. In the second we will compute the conductor $r$ of $\lambda $ in terms of $n$ and we will show that the orbit of $\lambda $ contains the matrix $\varpi_{F} ^{r-1}\alpha $ whose image in $\M _{p}(\mathcal{O}_{F}/\mathfrak{p}_{F}^{l'})$ satisfies the properties from the statement of Theorem \ref{TheoremGLp}. \\\\
%\textbf{Step 1:}
By Lemma \ref{Lambdarestrictedirreducible}, $\Lambda\mid_{J^{\circ}}$ is irreducible and by Proposition \ref{BKprop}, $\Lambda\mid _{J^{\circ}}$ is a cuspidal type on $J^{ \circ }$ for $\mathfrak{I}(\pi) $. By Lemma \ref{inductioniscuspidaltype}, ${\rm Ind}_{J^{\circ}}^{K}(\Lambda \mid _{J^{\circ}})$ is a cuspidal type on $K$ for $\mathfrak{I}(\pi) $ and by Theorem \ref{Paskunas}, ${\rm Ind}_{J^{\circ}}^{K}(\Lambda \mid _{J^{\circ}})\cong \lambda$. By Mackey's formula, $\lambda\mid_{U_{\mathfrak{A}}^{\lfloor\frac{n}{2}\rfloor+1}}$ contains $\psi _{\alpha }$.
By Remark \ref{remarkconjugation}, we can assume $\mathfrak{A}=\mathfrak{M}$ or $\mathfrak{A}=\mathfrak{I}$.

\textbf{Case 2.1} $\mathfrak{A}=\mathfrak{M}$. The representation $\lambda$ restricted to $U_{\mathfrak{M}}^{n+1}$ contains $\psi _{\alpha }\mid _{U_{\mathfrak{M}}^{n+1}}=1_{U_{\mathfrak{M}}^{n+1}}$. Since $U_{\mathfrak{M}}^{n+1}$ is an open normal subgroup of $K$ and $K$ is compact, $\lambda\mid _{U_{\mathfrak{M}}^{n+1}}$ is a direct sum of irreducible representations and each of them is conjugated to the trivial character. This means $\lambda\mid_{U_{\mathfrak{M}}^{n+1}}$ is trivial and $\lambda $ factors through $\GL_{p}(\mathcal{O}_{F}/\mathfrak{p}_{F}^{n+1})$. Since $\nu_{\mathfrak{M}}(\alpha )=-n$,  the character $\psi _{\alpha }$ as a character of $U_{\mathfrak{M}}^{n}$ is non-trivial and $\lambda $ does not factor through $\GL_{p}(\mathcal{O}_{F}/\mathfrak{p}_{F}^{n})$. Therefore $\lambda $ has conductor $r=n+1$.

As before, let $l=\lfloor\frac{r+1}{2}\rfloor$ and let $ \alpha _{0}:=\varpi _{F}^{r-1} \alpha $. The restriction $\lambda\mid _{U_{\mathfrak{M}}^{l}}$ contains $\psi _{\alpha }$. Therefore $\alpha _{0}$ is contained in the orbit of $\lambda $. By Proposition \ref{Propositionsimplestratum}, the characteristic polynomial of $\alpha _{0}$ is irreducible mod $\mathfrak{p}_{F}$. We conclude that $\lambda$ is of type $(2)$ from Theorem \ref{TheoremGLp}.
 
\textbf{Case 2.2} $\mathfrak{A}=\mathfrak{I}$. We compute the conductor of $\lambda $ in terms of $n$. We know that $\lambda$ restricted to $U_{\mathfrak{I}}^{\lfloor\frac{n}{2}\rfloor+1}$ contains $\psi _{\alpha}$. However to compute the conductor we need information about the restrictions to subgroups $U_{\mathfrak{M}}^{j}$ for certain $j$. To switch between these two types of subgroups we will use the following inclusion:\begin{equation}\label{containing}
U_{\mathfrak{I}}^{p+1+pm}\supseteq U_{\mathfrak{M}}^{2+m}\qquad\text{for all}\quad m\in\mathbb{N}.
\end{equation}
Since $\lambda\mid_{U_\mathfrak{I}^{\lfloor\frac{n}{2}\rfloor+1}}$ contains $\psi _{\alpha}$, $\lambda \mid_{U_{\mathfrak{I}}^{n+1}}$ contains the trivial character. By (\ref{containing}), $\lambda\mid _{U_{\mathfrak{M}}^{\lfloor\frac{n}{p}\rfloor+2}}$ contains the trivial character. Similarly as in the case 2.1 we deduce that $\lambda $ factors through $\GL_{p}(\mathcal{O}_{F}/\mathfrak{p}_{F}^{\lfloor\frac{n}{p}\rfloor +2})$. %We want to show that $\lambda$ is of conductor $\lfloor\frac{n}{p}\rfloor+2$. %By Stpe 1, $\lambda\mid_{U_{\mathfrak{I}}^{\lfloor\frac{n}{2}\rfloor +1}}$ contains $\psi_{\alpha}$ where
By Proposition \ref{Propositionsimplestratum}, $\alpha=\varpi_{F}^{-\lfloor\frac{n}{p}\rfloor -1}\Pi_{\mathfrak{I}}^{j}B$ where $0<j:=p(\lfloor  \frac{n}{p} \rfloor +1)-n<p $ and $B\in U_{\mathfrak{I}}$. The restriction $\lambda \mid _{U_{ \mathfrak{I}}^{ \lfloor \frac{n}{2} \rfloor +1} \cap U _{ \mathfrak{M}}^{ \lfloor \frac{n}{p} \rfloor +1}}$ contains $ \psi _{\alpha } \mid _{U _{ \mathfrak{I}}^{ \lfloor \frac{n}{2} \rfloor +1} \cap U _{ \mathfrak{M}}^{ \lfloor \frac{n}{p} \rfloor +1}}$. Assume $ \psi _{ \alpha } \mid _{ U _{ \mathfrak{I}}^{ \lfloor \frac{n}{2} \rfloor +1} \cap U _{ \mathfrak{M}}^{ \lfloor \frac{n}{p} \rfloor +1}}$ is trivial. Then $ \alpha \in \mathfrak{P} _{ \mathfrak{I}}^{- \lfloor \frac{n}{2} \rfloor } + \mathfrak{P} _{ \mathfrak{M}}^{ - \lfloor \frac{n}{p} \rfloor}$ and  
\begin{align*} 
B=& \varpi _{F} ^{\lfloor \frac{n}{p} \rfloor  +1} \Pi _{ \mathfrak{I}} ^{-j} \alpha \in  \Pi _{\mathfrak{I}}^{-j} \mathfrak{P} _{ \mathfrak{M}} + \mathfrak{P} _{ \mathfrak{I}} ^{ - \lfloor \frac{n}{2} \rfloor +p \lfloor \frac{n}{p} \rfloor +p-j}\\=& \Pi _{ \mathfrak{I}}^{p-j} \mathfrak{M} + \mathfrak{P} _{ \mathfrak{I}} ^{ n- \lfloor \frac{n}{2} \rfloor } \subseteq \Pi _{ \mathfrak{I}} \left( \mathfrak{M} + \mathfrak{I} \right)=\Pi_\mathfrak{J}\mathfrak M.  
\end{align*} 
In particular, $\det B\in \mathfrak p_F$. Since $B \in U _{ \mathfrak{I}} $, this leads to a contradiction. Therefore $ \psi _{ \alpha } \mid _{U _{ \mathfrak{M}}^{ \lfloor \frac{n}{p} \rfloor +1} \cap U _{ \mathfrak{I}}^{ \lfloor \frac{n}{2} \rfloor +1}}$ is non-trivial. Consequently, $ \lambda \mid _{U _{ \mathfrak{M}}^{ \lfloor \frac{n}{p} \rfloor +1}}$ is non-trivial and $\lambda $ is of conductor $\lfloor\frac{n}{p}\rfloor+2$.

Write $r=\lfloor\frac{n}{p}\rfloor+2$ and $l=\lfloor \frac{r+1}{2} \rfloor $. Take $\beta_{0}$ from the orbit of $\lambda$ and let $\beta=\varpi_{F}^{-r+1}\beta_{0}$. 
%We want to show that we can pick $\beta$ such that $\Pi _{ \mathfrak{I}}^{n}\beta\in U_{\mathfrak{I}}$. 
Since $\lambda\mid _{U_{\mathfrak{I}}^{\lfloor\frac{n}{2}\rfloor +1}}$ contains $\psi _{\alpha}$, we know that $\lambda\mid_{U_{\mathfrak{M}}^{l}\cap U_{\mathfrak{I}}^{\lfloor\frac{n}{2}\rfloor +1}}$ contains $\psi _{\alpha }\mid _{U_{\mathfrak{M}}^{l}\cap U_{\mathfrak{I}}^{\lfloor\frac{n}{2}\rfloor +1}}$. On the other hand by the definition of the orbit of $\lambda $ 
\begin{equation}
 \lambda \mid _{U_{\mathfrak{M}}^{l}\cap U_{\mathfrak{I}}^{\lfloor\frac{n}{2}\rfloor +1}}=m_{\lambda}\bigoplus _{\bar{\beta_{0} '} \sim \bar{\beta_{0}}}\psi _{\varpi_{F}^{-r+1}\beta_{0} '}\mid _{U_{\mathfrak{M}}^{l}\cap U_{\mathfrak{I}}^{\lfloor\frac{n}{2}\rfloor +1}}\end{equation}
where $m_{ \lambda }\in \mathbb{N}$ and $\bar{\beta_{0}'}$ (resp. $\bar{\beta_{0}}$) is the image of $\beta_{0}'$ (resp. $\beta _{0}$) in $\textrm{M}_{p}(\mathcal{O}_{F}/\mathfrak{p}_{F}^{r-l})$ and $\bar{\beta_{0}'}$ runs over the conjugacy class of $\bar{\beta_{0}}$ under $\textrm{GL}_p(\mathcal{O}_{F}/\mathfrak{p}_{F}^{r-l})$. Therefore we can assume

\begin{equation}\label{eofchar}\psi_{\beta}\mid _{U_{\mathfrak{M}}^{l}\cap U_{\mathfrak{I}}^{\lfloor\frac{n}{2}\rfloor +1}}=\psi _{\alpha} \mid _{U_{\mathfrak{M}}^{l}\cap  U_{\mathfrak{I}}^{\lfloor \frac{n}{2}\rfloor +1}} . \end{equation}

The rest of the proof of this case was suggested by Shaun Stevens. The alternative version of the proof can be found the author's Ph.D. thesis \cite{ASthesis}. The equation (\ref{eofchar}) implies that $\beta - \alpha \in \mathfrak{P}_{\mathfrak{M}}^{1-l}+\mathfrak{P}_{\mathfrak{I}}^{-\lfloor \frac{n}{2} \rfloor}$. Therefore, 
\begin{equation*} 
\beta  \in \Pi _{ \mathfrak{I}}^{-n}U_{ \mathfrak{I}} + \mathfrak{P}_{\mathfrak{M}}^{-l+1} + \mathfrak{P}_{ \mathfrak{I}}^{ -\lfloor \frac{n}{2} \rfloor} \subseteq \Pi _{\mathfrak{I}}^{-n}U_{\mathfrak{I}}+ \mathfrak{P}_{ \mathfrak{M}}^{-l+1}.     
\end{equation*} 
Take $\beta_{1} \in \Pi _{ \mathfrak{I}}^{-n} U _{ \mathfrak{I}}$ such that $\beta - \beta_{1} \in \mathfrak{P}_{ \mathfrak{M}}^{-l+1}$. Since $\psi _{\beta }$ is a character of $U_{ \mathfrak{M}}^{l}$ we have $\psi _{\beta }=\psi _{ \beta_{1}}$ and we can assume that $\beta \in \Pi _{ \mathfrak{I}}^{-n}U _{ \mathfrak{I}}$. Therefore, $\lambda$ is of the form $(3)$ from Theorem \ref{TheoremGLp}. This concludes the proof of the first direction.

Now we prove that a one-dimensional twist of a representation which is of the form $(3)$ and has conductor at least $4$ or of the form $(1)$ or $(2)$ is a cuspidal type. 
%Since a one-dimensional twist of a cuspidal type is a  cuspidal type we can consider given representations up to one-dimensional twists.

\textbf{Case 3} Let $\rho $ be an irreducible smooth representation of $K$ which is inflated from an irreducible cuspidal representation of $\GL_{2}(k_{F})$. We can extend $\rho $ to an irreducible representation of $KZ$. Denote this extension by $\Lambda $. By Theorem \ref{theoremclassificationcusrep}, $\pi =\cInd _{KZ}^{G}\Lambda $ is an irreducible cuspidal representation of $G$. By Proposition \ref{BKprop}, $\rho $ is a cuspidal type on $K$ for $\mathfrak{I}(\pi)$.

\textbf{Case 4} Assume $\rho $ is an irreducible smooth representation of $K$ either whose orbit contains a matrix with characteristic polynomial irreducible mod $\mathfrak{p}_{F}$ or with conductor at least $4$ and an orbit containing a matrix of the form $\Pi _{ \mathfrak{I}}^{j}B$ for $j \in \mathbb{N}$, $0<j<p$ and $B \in U_{ \mathfrak{I}}$. Denote this matrix by $\beta_{0}$ and let $r$ be the conductor of $\rho $. Let $\pi $ be an irreducible smooth representation of $G$ which contains $\rho $. First we will prove that $\pi $ contains a simple stratum. We will divide the proof into two subcases depending on the properties of $\beta _{0}$. 

\textbf{Case 4.1} Assume $\beta _{0}$ is a matrix with the characteristic polynomial irreducible mod $\mathfrak{p}_{F}$. Let $n:=r-1$ and $\beta :=\varpi_{F} ^{-n}\beta _{0}$. Since $\pi $ contains the character $\psi _{\beta }$ of $U_{\mathfrak{M}}^{\lfloor\frac{r+1}{2}\rfloor}\supseteq U_{\mathfrak{M}}^{n}$ the representation $\pi $ contains the stratum $[\mathfrak{M},n, n-1, \beta ]$. Since the characteristic polynomial of $\beta_{0}$ is irreducible modulo $\mathfrak{p}_{F}$, this stratum is not equivalent to a stratum $[\mathfrak{M},n,n-1,\beta ']$ with a scalar matrix $\beta'$. By Proposition \ref{Propositionsimplestratum}, the stratum is simple. By Lemma \ref{lemmasimpleimpliescusp}, $\pi$ is cuspidal and there exists a simple stratum $[\mathfrak{M},n,n-1,\beta_{1}]$ equivalent to $[\mathfrak{M},n,n-1,\beta]$ such that $\pi\cong \cInd_{J}^{G}\Lambda$ where $J=F[\beta_{1}]^{\times}U_{\mathfrak{M}}^{\lfloor \frac{n+1}{2}\rfloor}$ and $\Lambda\mid_{U_{\mathfrak{M}}^{\lfloor\frac{n}{2}\rfloor+1}}$ contains $\psi _{\beta_{1}}$.
By Lemma \ref{inductioniscuspidaltype}, to finish the proof it is is enough to show $\rho \cong {\rm Ind}_{J\cap K}^{K}\left( \Lambda \mid _{J\cap K} \right)$. Since $\rho $ is contained in $\pi $, we have 
 \begin{equation*}{\rm Hom}_{K}(\rho  , (\cInd _{J}^{G}\Lambda )\mid _{K})\neq 0.
 \end{equation*} 
By Mackey formula and Frobenius reciprocity there exists $g\in J\setminus G/ K$ such that
\begin{equation}\label{rhoInd} {\rm Hom}_{K}(\rho , \cInd _{J^{g}\cap K}^{K}(\Lambda ^{g} \mid _{J^{g}\cap K}))\cong {\rm Hom}_{J^{g}\cap K}(\rho \mid _{J^{g}\cap K} , \Lambda ^{g} \mid _{J^{g}\cap K})\neq 0
\end{equation} 
 where $\Lambda ^{g}$ denotes the representation $\Lambda ^{g}(x)=\Lambda (gxg^{-1})$ for any $x\in J^{g}\cap K$. 
In particular, ${\rm Hom}_{U_{\mathfrak{M}}^{n}\cap (U_{\mathfrak{M}}^{n})^{g}}(\rho , \Lambda ^{g} )\neq 0$. Denote the subgroup $U_{\mathfrak{M}}^{n}$ by $H$. By Proposition \ref{Clifford2} and Remark \ref{Cliffordremark}, $\rho  \mid _{H}$ is a multiple of a direct sum of one-dimensional representations and each of them is conjugate to $\psi _{\beta  }$ by an element of $K$. Therefore there exist $g_{1}\in K$ such that\begin{center}
${\rm Hom}_{H\cap H^{g}}(\psi _{\beta  }^{g_{1}}\mid _{H\cap H^{g}},\psi _{\beta_{1} }^{g}\mid _{H\cap H^{g}})\neq 0$.
\end{center} Since $g_{1}$ normalizes $H$, the previous is equivalent to
\begin{center}
 ${\rm Hom}_{H^{g_{1}}\cap (H^{g_{1}})^{g_{1}^{-1}g}}(\psi _{\beta  ^{g_{1}}}\mid _{H^{g_{1}}\cap (H^{g_{1}})^{g_{1}^{-1}g}},(\psi _{\beta_{1} ^{g_{1}}})^{g_{1}^{-1}g}\mid _{H^{g_{1}}\cap (H^{g_{1}})^{g_{1}^{-1}g}})\neq 0$
 \end{center}
 where $\beta ^{g_{1}}=g_{1}^{-1}\beta  g_{1}$. Since $\beta ^{g_{1}}\equiv \beta_{1}^{g_{1}} \mod \mathfrak{P}_{\mathfrak{M}}^{1-n}$, $g_{1}^{-1}g$ intertwines the stratum $[\mathfrak{M},n, n-1, \beta_{1} ^{g_{1}} ]$. Since $[\mathfrak{M},n, n-1, \beta_{1} ]$ is simple and $g_{1}\in K$ the stratum $[\mathfrak{M},n, n-1, \beta_{1} ^{g_{1}}]$ is also simple. By \cite[1.5.8]{BK}, $g_{1}^{-1}g\in F[\beta_{1}^{g_{1}}]^{\times}K$ and therefore $g\in F[\beta_{1}]^{\times}K= J K$. By (\ref{rhoInd}), $\rho $ is isomorphic to a subrepresentation of $ \cInd _{J\cap K}^{K}(\Lambda \mid _{J\cap K})$ so by Lemma \ref{inductioniscuspidaltype} $\rho $ is a cuspidal type on $K$ for $\mathfrak{I}( \pi )$. 
 
\textbf{Case 4.2} Assume now that $\rho $ has conductor $r>3$ and has an orbit containing a matrix $\beta _{0}$ of the form $\Pi _{\mathfrak{I}}^{j}B$ for some $j \in \mathbb{N}$, $0<j<p$ and $B \in U_{ \mathfrak{I}}$. Denote $\beta = \varpi _{F}^{-r+1} \beta _{0}$. We have $\nu _{ \mathfrak{I}}( \beta )= -p (r-1) +j$. Put $n:=p(r-1)-j$. The stratum $[\mathfrak{I}, n,n-1,\beta ]$ is simple. We have 
\begin{equation*}U_{ \mathfrak{I}}^{n} = 1+ \varpi _{F}^{r-2} \mathfrak{P}_{ \mathfrak{I}}^{p-j} \subseteq U_{ \mathfrak{M}}^{r-2}.
\end{equation*}For $r > 3$ the last group is contained in $U_{ \mathfrak{M}}^{\lfloor \frac{r+1}{2} \rfloor }=U_{\mathfrak{M}}^{l}$. Since $\pi$ contains $ \psi _{ \beta }$ as a character on $U_{ \mathfrak{M}}^{l}$, it also contains its restriction to $U_{ \mathfrak{I}}^{n}$. Therefore, $\pi $ contains the simple stratum $[ \mathfrak{I}, n, n-1, \beta ]$. Since $\nu _{\mathfrak{I}}(\beta)$ and $p$ are coprime, the stratum $[\mathfrak{I},n,n-1,\beta]$ is not equivalent to a stratum $[\mathfrak{I},n,n-1,\beta']$ with a scalar matrix $\beta'$. 
%Therefore, we showed that in both subcases $\pi $ contains some simple stratum, say $[ \mathfrak{A},n, n-1, \beta]$. Since our considerations are up to one dimensional twist we can assume that $l(\pi ) \leqslant l(\chi \pi )$ for any character $\chi $ of $F^{\times }$. 
By Lemma \ref{lemmasimpleimpliescusp}, $\pi $ is cuspidal. By Lemma \ref{minimalleveltwisting}, $l(\pi)\leqslant l(\chi\pi)$ for every one-dimensional character $\chi: F^{\times}\to\mathbb{C}^{\times}$. 
%The restriction $ \rho \mid _{ U _{ \mathfrak{I}}^{ \lfloor \frac{n}{2} \rfloor +1}}$ contains a character $ \psi _{ \alpha } $ such that 
%\begin{equation} \label{eofchar2} 
%\psi _{ \beta } \mid _{U _{\mathfrak{M}}^{l} \cap U _{ \mathfrak{I}} ^{ \lfloor \frac{n}{2} \rfloor +1}}= \psi _{ \alpha } \mid _{ U _{ \mathfrak{M}}^{l} \cap U _{ \mathfrak{I}} ^{ \lfloor \frac{n}{2} \rfloor +1}}. 
%\end{equation} 
%In a similar way as from (\ref{eofchar}) we deduce from (\ref{eofchar2}) that $ \alpha $ is of the form $ \varpi _{F} ^{-r+1} \Pi _{ \mathfrak{I}}^{j} B'$ with some $B' \in U _{ \mathfrak{I}}$. Take $\pi _{1}$ to be an irreducible smooth representation of $G$ such that $l ( \pi _{1}) \leqslant l( \chi \pi _{1})$ for any character $\chi $ of $ F^{ \times }$ and which contains $\rho $ and such that $ \pi _{1} \cong \cInd _{J}^{G} \Lambda $ with $ \Lambda $ such that $ \Lambda \mid _{ U _{\mathfrak{I}}^{ \lfloor \frac{n}{2} \rfloor +1}}$ contains $\psi _{ \alpha }$ as a character of $ U _{ \mathfrak{I}}^{ \lfloor \frac{n}{2} \rfloor +1}$. 
By Lemma \ref{lemmasimpleimpliescusp}, there exists a simple stratum $[\mathfrak{I},n,n-1,\beta _{1}]$ equivalent to $[\mathfrak{I},n,n-1,\beta]$ such that $\pi \cong \cInd _{J}^{G}\Lambda $ where $J=F[\beta_{1}]^{\times}U_{\mathfrak{I}}^{\lfloor \frac{n+1}{2}\rfloor}$ and $\Lambda\mid _{U_{\mathfrak{I}}^{\lfloor \frac{n}{2}\rfloor+1}}$ contains $\psi _{\beta_{1}}$.
Let $ \rho _{1}$ be an irreducible component of $ \rho \mid _{ U _{ \mathfrak{I}}}$ such that $ \rho _{1} \mid _{ U_{ \mathfrak{I}}^{n}}$ contains $ \psi _{ \beta }\mid_{U_{\mathfrak{I}}^{n}}$. Then 
\begin{equation*} 
\textrm{Hom} _{ U _{ \mathfrak{I}}} \left(  \rho _{1}, \left( \cInd _{J}^{G} \Lambda \right) \mid _{ U _{ \mathfrak{I}}} \right) \neq 0.  
\end{equation*} 
By Mackey formula and Frobenius reciprocity there exists $ g \in J \setminus G / U _{ \mathfrak{I}}$ such that 
\begin{equation} \label{MF}  
\textrm{Hom} _{U _{ \mathfrak{I}}} \left( \rho _{1}, \cInd _{ J^{g} \cap U _{ \mathfrak{I}}}^{ U _{ \mathfrak{I}}} \left( \Lambda ^{g} \mid _{ J^{g} \cap U _{ \mathfrak{I}}} \right) \right) \cong \textrm{Hom } _{J ^{g} \cap U _{ \mathfrak{I}}} \left( \rho _{1} \mid _{ J^{g} \cap U _{ \mathfrak{I}}}, \Lambda ^{g} \mid _{ J^{g} \cap U _{ \mathfrak{I}}} \right) \neq 0.  
\end{equation} 
In particular 
\begin{equation*} 
\textrm{Hom } _{ U _{ \mathfrak{I}}^{n} \cap \left( U _{ \mathfrak{I}} ^{n} \right) ^{g}} \left( \rho_{1} , \Lambda ^{g} \right) \neq 0.  
\end{equation*}     
Denote by $H _{1}$ the subgroup $ U _{ \mathfrak{I}} ^{n}$. There exists $ g_{1} \in U _{ \mathfrak{I}}$ such that 
\begin{equation*} 
\textrm{Hom} _{ H_{1} \cap H_{1}^{g}} \left(\psi _{ \beta }^{g_{1}} \mid _{ H_{1} \cap H_{1}^{g}}, \psi _{ \beta_{1} }^{g} \mid _{ H_{1} \cap H_{1}^{g}} \right) \neq 0. 
\end{equation*} 
Since $g_{1} \in N _{G}(H_{1})= U _{ \mathfrak{I}} \rtimes \langle \Pi _{ \mathfrak{I}} \rangle $ the previous one is equivalent to 
\begin{equation*} 
\textrm{Hom} _{H_{1}^{g_{1}} \cap(H_{1}^{g_{1}})^{g_{1}^{-1}g}} \left( \psi _{ \beta ^{g_{1}}} \mid _{ H _{1} ^{g_{1}} \cap (H_{1} ^{g_{1}})^{ g_{1} ^{-1}g }} , ( \psi _{ \beta_{1} ^{g_{1}}} )^{g_{1}^{-1}g} \mid _{H _{1} ^{g_{1}} \cap (H_{1}^{g_{1}})^{g_{1}^{-1}g}} \right) \neq 0. 
\end{equation*} 
Therefore $g_{1}^{-1} g$ intertwines stratum $[ \mathfrak{I}, n, n-1, \beta_{1} ^{g_{1}}]$. Since $[ \mathfrak{I}, n, n-1, \beta_{1} ]$ is simple and $g_{1} \in U _{ \mathfrak{I}}$, the stratum $[ \mathfrak{I}, n, n-1 , \beta_{1} ^{ g_{1}} ]$ is also simple. Therefore $g \in J U _{ \mathfrak{I}}$. By Lemma \ref{inductioniscuspidaltype}, $\cInd_{J\cap U_{\mathfrak{I}}}^{K}\left(\Lambda\mid_{J\cap U_{\mathfrak{I}}}\right)=\cInd_{ U_{\mathfrak{I}}}^{K}\cInd_{J\cap U_{\mathfrak{I}}}^{U_{\mathfrak{I}}}\left( \Lambda\mid_{J\cap U_{\mathfrak{I}}}\right)$ is irreducible. In particular, $\cInd_{J\cap U_{\mathfrak{I}}}^{U_{\mathfrak{I}}}\left( \Lambda \mid_{J \cap U_{\mathfrak{I}}}\right)$ is also irreducible. By (\ref{MF}), $ \rho _{1}$ is isomorphic to $ \cInd _{ J \cap U _{ \mathfrak{I}}}^{ U_{ \mathfrak{I}}} \left( \Lambda \mid _{ J \cap U _{ \mathfrak{I}}} \right)$. Finally, by Frobenius reciprocity, 
\begin{equation*}\textrm{Hom}_{K}(\rho,\cInd _{J\cap U_{\mathfrak{I}}}^{K}\left( \Lambda\mid_{J\cap U_{\mathfrak{I}}} \right))\cong \textrm{Hom}_{U_{\mathfrak{I}}}(\rho\mid_{U_{\mathfrak{I}}},\cInd _{J\cap U_{\mathfrak{I}}}^{U_{\mathfrak{I}}}\left( \Lambda \mid _{J\cap U_{\mathfrak{I}}}\right)\neq 0. 
\end{equation*}
Therefore $ \rho \cong \cInd _{J \cap U _{ \mathfrak{I}}} ^{ K} \left( \Lambda \mid _{ J \cap U _{ \mathfrak{I}}} \right)$ and $\rho $ is a cuspidal type on $K$ for $\mathfrak{I}( \pi)$.

    \end{proof} 
 \end{subsection}

 \begin{subsection}{Cuspidal types on $\GL _{2}( \mathcal{O} _{F})$}  
 The goal of this subsection is to prove the following theorem:
 \begin{theorem}
\label{TheoremGL2}
A cuspidal type on $K_{2}=\GL_{2}( \mathcal{O}_{F})$ is precisely a one-dimensional twist of one of the following:
\begin{enumerate}
\item a representation inflated from some irreducible cuspidal representation of $\GL_{2}(k_{F})$; 
\item a representation whose orbit contains a matrix whose characteristic polynomial is irreducible mod $\mathfrak{p}_{F}$;
\item a representation whose orbit contains a matrix $\beta _{0} $ whose characteristic polynomial is Eisenstein and which satisfies one of the following: \begin{enumerate}
\item it has conductor at least $4$;
%\item has conductor 2 and is isomorphic to $\mathrm{Ind}_{\mathrm{Stab}_{K_{2}}(\bar{\psi}_{\bar{\beta} } )}^{K_{2}}\theta $ for a certain $\theta $ containing a lift of $\bar{\psi}_{\bar{\beta}}$ to ${\rm Stab}_{K_{2}}(\bar{\psi} _{\bar{\beta}})$ and such that  $\theta $ is non-trivial on $\begin{pmatrix}
%1&\mathcal{O}_{F}\\
%0&1
%\end{pmatrix}$
\item it has conductor $r=2$ or $3$ and is isomorphic to $\mathrm{Ind}_{S}^{K_{2}}\theta $ where $S=\bigcup_{a\in\mathcal{O}_{F}^{\times}}
\begin{pmatrix}
a+\mathfrak{p}_{F}& \mathcal{O}_{F}\\
\mathfrak{p}_{F}& a+\mathfrak{p}_{F}
\end{pmatrix}$, $\theta$ is an irreducible character of $S$ such that $\theta \mid _{ U _{ \mathfrak{M}}^{ \lfloor \frac{r+1}{2} \rfloor}}=m \psi _{\varpi_{F}^{-r+1}\beta_{0} }$ for certain $m \in \mathbb{Z}$ and $ \theta $ does not contain the trivial character of the group $\begin{pmatrix}
1&\mathfrak{p}_{F}^{r-2}\\
0&1
\end{pmatrix}$.   
\end{enumerate}  
\end{enumerate}
\end{theorem} 
\begin{remark}[{see subsection \ref{regularity}}]  Any matrix of the form $\Pi _{\mathfrak{I}}B$ for $B \in U_{ \mathfrak{I}}$ has a characteristic polynomial which is Eisenstein. Moreover any matrix whose characteristic polynomial is Eisenstein is $\GL_{p}( \mathcal{O}_{F})$-conjugate to one of the form $\Pi _{ \mathfrak{I}} B$, $B \in U_{\mathfrak{I}}$.  \end{remark}

\begin{lemma}\label{lem-Ciuchacz} Let $\rho $ be a representation with conductor $r\in\{2,3\}$ whose orbit contains a matrix of the form $\Pi_{\mathfrak{I}}B$ for $B\in U_{\mathfrak{I}}$. The representation $\rho $ is a cuspidal type if and only if there exists an irreducible smooth representation $\pi$ of $G$ which contains $\rho $ and whose normalized level is $l(\pi )> r-2$.
\end{lemma} 
\begin{proof}
First we prove that if $\rho$ is a cuspidal type then every irreducible smooth representation $\pi $ of $G$ containing $\rho $ satisfies $l(\pi)>r-2$. For the sake of contradiction, assume that $\rho $ is a cuspidal type and there exists an irreducible smooth representation $\pi _{1}$ of $G$ which contains $\rho $ and has $l(\pi _{1} )\leqslant r-2$. The representation $\rho$ is a cuspidal type for $\mathfrak{I}(\pi _{1})$. 

\textbf{Case }$r=2$, $l(\pi _{1})=0$. By (\cite{BH}, 14.5 Exhaustion theorem), $\pi _{1}\cong\cInd _{K_{2}Z}^{G}\Lambda $ for some $\Lambda $ such that $\Lambda \mid _{K_{2}}$ is inflated from an irreducible cuspidal representation of $\GL_{2}(k_{F})$. By Proposition \ref{BKprop}, $\Lambda\mid_{K_{2}}$ is a cuspidal type for $\mathfrak{I}(\pi _{1})$. By Theorem \ref{Paskunas}, $\Lambda\mid _{K_{2}}\cong \rho$. Since the conductor of $\rho $ is 2 this is impossible. 

\textbf{Case }$r=3$, $l(\pi_1)=0$ is handled as before.
%Then $l(\pi _{1})\leqslant 1$. As before the normalized level of $\pi _{1}$ cannot be zero. 

\textbf{Case }$r=3$, $l(\pi_1)=1/2$. By \cite[12.9 Theorem]{BH}, the representation $\pi _{1}$ contains a fundamental stratum $[ \mathfrak{A},1,0,b]$ with $e(\mathfrak{A})=2$. If $\pi _{1}$ contains some stratum, then it contains all of its $G$-conjugates. Therefore $ \pi _{1}$ contains a fundamental stratum $[\mathfrak{I},1,0, a]$ for certain $a$. By \cite[13.1 Proposition 1]{BH}, we have $a\mathfrak{I}=\mathfrak{P}_{ \mathfrak{I}}^{-1}$. In particular, $a \in \mathfrak{K}( \mathfrak{I})$, $\nu _{ \mathfrak{I}}( a)=-1$ so $\varpi_{F}a= \Pi _{ \mathfrak{I}} B$ for some $B \in U_{ \mathfrak{I}}$. By Proposition \ref{Propositionsimplestratum}, the stratum $[ \mathfrak{I}, 1, 0, a]$ is simple. By Lemma \ref{minimalleveltwisting} and Lemma \ref{lemmasimpleimpliescusp}, the representation $\pi _{1}\cong\cInd _{J}^{G}\Lambda$ for certain $J$ containing $U_{\mathfrak{I}}^{1}$ and $\Lambda $ such that $\Lambda\mid_{U_{\mathfrak{I}}^{2}}$ is trivial. By the uniqueness of types $\rho \cong {\rm Ind}_{J\cap K_{2}}^{K_{2}}(\Lambda \mid _{J\cap K_{2}})$ and $\rho \mid _{J\cap K_{2}}$ contains $\Lambda \mid _{J\cap K_{2}}$. Since $U_{\mathfrak{M}}^{2}\subseteq U_{\mathfrak{I}}^{2}$ this means $\rho \mid _{U_{\mathfrak{M}}^{2}}$ is trivial. The conductor of $\rho $ is 3 so this is impossible.

\textbf{Case }$r=3$, $l(\pi_{1})=1$ and for every character $\chi$ of $F^{\times }$ we have $l(\pi _{1})\leqslant l(\chi\pi _{1})$. By Theorem \ref{theoremclassificationcusrep} and \cite[12.9. Theorem]{BH}, $\pi _{1} $ contains a simple stratum of the form $[\mathfrak{A}, e(\mathfrak{A}),e(\mathfrak{A})-1, a]$. We have $U_{\mathfrak{M}}^{2}\subseteq U_{\mathfrak{I}}^{3}$ so arguing like in the previous case we deduce that $\rho \mid _{U_{\mathfrak{M}}^{2}}$ is trivial which again is impossible.

\textbf{Case }$r=3$, $l(\pi_{1})=1$ and there exists a character $\chi$ of $F^{\times }$ such that $l(\chi \pi _{1})=0$ or $l(\chi\pi _{1})=\frac{1}{2}$. Applying the arguments from before to $\chi\rho$ and $\chi\pi _{1}$ we deduce that $\chi\rho \mid _{U_{\mathfrak{I}}^{2}}$ is trivial. 
In particular, both $\chi\rho$ and $\rho$ are trivial on $U^3_\mathfrak{M}$. Therefore $\chi\circ{\rm det}\mid _{U_{\mathfrak{M}}^{3}}$ is trivial. By Lemma \ref{lem-Gutoslaw} this is impossible.
\begin{lemma}\label{lem-Gutoslaw}
Let $\rho $ be an irreducible smooth representation of $K$ with conductor $r>1$ and an orbit containing a matrix whose characteristic polynomial is Eisenstein. Let $\chi $ be a character of $F^{\times }$ such that $\chi\circ {\rm det}\mid _{U_{\mathfrak{M}}^{r}}$ is trivial. Then the conductor of $\chi\rho $ is greater than or equal to $r$. 
\end{lemma}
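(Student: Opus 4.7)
My plan is to assume for contradiction that $\chi\rho$ has conductor at most $r-1$, so that $\chi\rho\mid_{U_{\mathfrak{M}}^{r-1}}$ is trivial. Since $\chi\rho = (\chi\circ\det)\otimes\rho$, this is equivalent to $\rho(1+x)=\chi(\det(1+x))^{-1}\cdot\mathrm{id}$ for every $1+x\in U_{\mathfrak{M}}^{r-1}$. Note that $r-1\geq l=\lfloor(r+1)/2\rfloor$, so $U_{\mathfrak{M}}^{r-1}\subseteq U_{\mathfrak{M}}^{l}$. Applying the Clifford decomposition \eqref{decomp} and using that $\rho$ is a multiple of a scalar character on $U_{\mathfrak{M}}^{r-1}$, each character $\varphi_{\varpi_{F}^{-r+1}\alpha}$ in the orbit must satisfy $\varphi_{\varpi_{F}^{-r+1}\alpha}\mid_{U_{\mathfrak{M}}^{r-1}}=(\chi\circ\det)^{-1}\mid_{U_{\mathfrak{M}}^{r-1}}$. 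In particular this holds for the chosen representative $\beta$ with Eisenstein characteristic polynomial.

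Next I would compute both sides explicitly. For $x=\varpi_{F}^{r-1}y$ with $y\in\mathfrak{M}$, the left-hand side reads $\psi(\mathrm{tr}(\beta y))$. For the right-hand side I would first observe that the hypothesis forces the conductor of $\chi$ to be at most $r$, so $\chi$ is well defined on $(1+\mathfrak{p}_{F}^{r-1})/(1+\mathfrak{p}_{F}^{r})\cong k_{F}$ and can be parametrized via $\psi$ by an element $b\in\mathcal{O}_{F}$, giving $\chi(1+\varpi_{F}^{r-1}t)=\psi(bt)$. Using $\det(1+\varpi_{F}^{r-1}y)\equiv 1+\varpi_{F}^{r-1}\mathrm{tr}(y)\pmod{\mathfrak{p}_{F}^{r}}$ (the higher-order terms lie in $\mathfrak{p}_{F}^{2(r-1)}\subseteq\mathfrak{p}_{F}^{r}$), the equation becomes
\begin{equation*}
\psi\bigl(\mathrm{tr}((\beta+bI)y)\bigr)=1\qquad\text{for every }y\in\mathfrak{M}.
\end{equation*}
Running over $y=e_{ij}$ then forces $\beta+bI\in\mathfrak{P}_{\mathfrak{M}}$, so $\beta$ is a scalar matrix modulo $\mathfrak{p}_{F}$.

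Finally I would exploit the Eisenstein hypothesis. The characteristic polynomial of $\beta$ modulo $\mathfrak{p}_{F}$ is $x^{p}$, but the previous step shows it also equals $(x+\bar b)^{p}$; hence $\bar b=0$, i.e., $b\in\mathfrak{p}_{F}$. Consequently $\beta\equiv 0\pmod{\mathfrak{P}_{\mathfrak{M}}}$, which gives $\det(\beta)\in\mathfrak{p}_{F}^{p}\subseteq\mathfrak{p}_{F}^{2}$; this contradicts the Eisenstein condition that the constant coefficient of the characteristic polynomial lies in $\mathfrak{p}_{F}\setminus\mathfrak{p}_{F}^{2}$. The main delicate point is the parametrization of $\chi$ on $(1+\mathfrak{p}_{F}^{r-1})/(1+\mathfrak{p}_{F}^{r})$ by $\psi$ together with the fact that the shift $\beta\mapsto\beta+bI$ produced by the twist is harmless modulo $\mathfrak{P}_{\mathfrak{M}}$ for Eisenstein matrices; once this is set up, the Eisenstein property completes the contradiction.
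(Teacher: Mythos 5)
Your proof is correct and follows essentially the same route as the paper's: assume $\chi\rho$ is trivial on $U_{\mathfrak{M}}^{r-1}$, pass to the orbit decomposition, observe that the twist shifts the Eisenstein representative $\beta$ by a scalar matrix modulo $\mathfrak{p}_{F}$, and conclude that $\beta$ would have to be scalar modulo $\mathfrak{p}_{F}$, contradicting the Eisenstein condition. The only difference is expository: you obtain the scalarity of the shift by computing $\det(1+\varpi_{F}^{r-1}y)\pmod{\mathfrak{p}_{F}^{r}}$ directly rather than invoking Lemma \ref{determinantscalar} (\cite[2.4.11]{BK}), and you spell out the characteristic-polynomial argument that the paper compresses into the phrase ``which is impossible.''
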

\begin{proof}
Let $\lbrace \beta _{i} \rbrace _{i\in I}$ be the orbit of $\rho $. %Without loss of generality we can assume that the characteristic polynomial of $\beta _{1}$ is Eisenstein.
% and $\beta _{1}$ belongs to $\begin{pmatrix}
%0&1\\
%0&0
%\end{pmatrix}+\mathfrak{P}_{\mathfrak{M}}$. 
By Lemma \ref{determinantscalar}, $\chi\circ {\rm det}\mid _{U_{\mathfrak{M}}^{r-1}}$ is of the form $\psi _{\alpha }$ for some scalar matrix $\alpha \in \mathfrak{P}_{\mathfrak{M}}^{-r+1}$. %Now we will show that $\alpha _{1}=\varpi_{F} ^{r-1}\alpha$ belongs to $\begin{pmatrix}
%a&0\\
%0&a
%\end{pmatrix}+\mathfrak{P}_{\mathfrak{M}}$. Let $\alpha _{1}=\begin{pmatrix}
%b&c\\
%d&e
%\end{pmatrix}$. Compute \begin{center}$\psi _{\alpha }\left(\begin{pmatrix}
%1+x\varpi_{F}^{r-1}&0\\
%0&1-x\varpi_{F}^{r-1}
%\end{pmatrix}\right)=\chi \left(1-x^{2}\varpi_{F} ^{2r-2}\right)$\end{center}Since $\chi \circ {\rm det}$ is trivial on $U_{\mathfrak{M}}^{3}$ and $2r-2\geqslant r$,\begin{center}$\chi \left( 1-x^{2} \varpi_{F} ^{2r-2} \right) =\chi\circ {\rm det} \left( \begin{pmatrix}
%1-x^{2}\varpi_{F}^{2r-2}&0\\
%0&1
%\end{pmatrix} \right) =0$.
%\end{center} Therefore $\psi \left( \mathrm{tr}\left(\alpha _{1}\begin{pmatrix}
%x&0\\
%0&-x
%\end{pmatrix}\right)\right)=0$ for any $x\in \mathcal{O}_{F}$ and $\psi $ is trivial on the ideal generated by  $b-e$ in $\mathcal{O}_{F}$. This means that this ideal has to be contained in $\mathfrak{p}_{F}$ and in particular $b-e\in \mathfrak{p}_{F}$. Similarly $\psi \left( \mathrm{tr} \left( \alpha _{1} \begin{pmatrix}0&x\\
%0&0
%\end{pmatrix}\right) \right) =\chi (1)=0$ and $\psi \left( \mathrm{tr} \left( \alpha _{1}\begin{pmatrix}
%0&0\\
%x&0
%\end{pmatrix}\right) \right) =\chi (1)=0$ for any $x\in \mathcal{O}_{F}$. Therefore $c,d\in \mathfrak{p}_{F}$. 
By the definitions of $\psi _{\beta _{i} }$ and $\psi_{\alpha }$, 
\begin{center}
$\chi\rho\mid _{U_{\mathfrak{M}}^{r-1}}= m\bigoplus  _{i\in I}\psi _{\varpi_{F}^{-r+1}\beta _{i}+\alpha }\mid _{U_{\mathfrak{M}}^{r-1}}$ for some $m\in\mathbb{N}$.
\end{center}The restriction $\chi\rho\mid _{U_{\mathfrak{M}}^{r-1}}$ is trivial if and only if $\varpi_{F}^{-r+1}\beta _{i}+\alpha\in \mathfrak{P}_{\mathfrak{M}}^{-r+2}$ for all $i\in I$. If $\chi\rho|_{U_{\mathfrak{M}}^{r-1}}$ is trivial, then $\beta _{1}+ \varpi _{F}^{r-1}\alpha \in \mathfrak{P}_{\mathfrak{M}}$. This is impossible because the matrices in $\M_2(\mathcal O_F)$ whose characteristic polynomial is Eisenstein are not congruent to a scalar matrix modulo $\mathfrak P_\mathfrak M$.

% because the form of $\alpha_{1}$ and $\beta _{1}$ implies that $\alpha_{1}+\beta _{1}\in\begin{pmatrix}
%a&1\\
%0&a
%\end{pmatrix}+\mathfrak{P}_{\mathfrak{M}}$.   
    
\end{proof}

Now we prove the converse: if there exists an irreducible smooth representation of $G$ which contains $\rho $ and whose normalized level is strictly greater than $r-2$, then $\rho $ is a cuspidal type. Assume that there exists an irreducible representation $\pi $ of $G$ which contains $\rho $ and such that $l(\pi )>r-2$. The representation $\rho $ has an orbit containing a matrix of the form $\alpha _{0}=\Pi _{ \mathfrak{I}} B$ with $B \in U_{ \mathfrak{I}}$. Let $\alpha :=\varpi_{F} ^{-r+1}\alpha _{0}$. In particular, $\rho $ contains the character $\psi _{\alpha }$ of $U_{\mathfrak{M}}^{\lfloor\frac{r+1}{2}\rfloor }=U_{\mathfrak{M}}^{r-1}$. We want to show that $\psi _{\alpha }$ has an extension to $U_{\mathfrak{I}}^{2r-3}$ which is trivial on $U_{\mathfrak{I}}^{2r-2}$ and which is contained in $\rho $. Since $\alpha \mathfrak{P}_{\mathfrak{I}}^{2r-2} \subseteq \mathfrak{P}_{ \mathfrak{I}}$, $\psi _{\alpha }$ is trivial not only on $U_{\mathfrak{M}}^{r}$ but also on $U_{\mathfrak{I}}^{2r-2}$. Therefore $\psi _{\alpha }$ can be seen as a character of $U_{\mathfrak{M}}^{r-1}/U_{\mathfrak{I}}^{2r-2}$. We have $U_{\mathfrak{M}}^{r-1}/U_{\mathfrak{I}}^{2r-2}\subseteq U_{\mathfrak{I}}^{2r-3}/U_{\mathfrak{I}}^{2r-2}$ and the last group is abelian, so $\psi _{\alpha }$ has a one-dimensional extension to $U_{\mathfrak{I}}^{2r-3}/U_{\mathfrak{I}}^{2r-2}$ which is contained in $\rho$. This extension is of the form $\psi_{\kappa }$ for $\kappa\in \mathfrak{P}_{\mathfrak{I}}^{-2r+3}$. This means that the stratum $[\mathfrak{I},2r-3, 2r-4, \kappa ]$ is contained in $\pi $ and $\psi _{\kappa }$ is contained in $\rho $. Since the normalized level of $\pi$ is strictly greater than $r-2$, it has to be equal to $\frac{2r-3}{2}$. By (\cite{BH}, 12.9 Theorem) the stratum $[\mathfrak{I},2r-3, 2r-4, \kappa ]$ is fundamental. By \cite[13.1. Proposition 1.]{BH}, it is also simple. Similarly as in the Case 4.2 of the proof of Theorem \ref{TheoremGLp}, this means that $\rho $ is a cuspidal type. This concludes the proof of Lemma \ref{lem-Ciuchacz}.
\end{proof}  
 \begin{proof}[Proof of Theorem \ref{TheoremGL2}]
By Theorem \ref{TheoremGLp}, to prove Theorem \ref{TheoremGL2} it is enough to prove that a representation of $K_{2}$ whose conductor is $2$ or $3$ and whose orbit is equivalent to an orbit which contains a matrix of the form $\Pi _{\mathfrak{I}}B$ for $B \in U_{\mathfrak{I}}$ is a cuspidal type if and only if 3(b) from Theorem \ref{TheoremGL2} is satisfied. 

Let $\rho $ be a representation of conductor $r=2$ or $r=3$ whose orbit contains a matrix $\beta_{0}=\Pi _{ \mathfrak{I}}B$ for $B \in U_{ \mathfrak{I}}$. Denote $\beta:=\varpi_{F}^{-r+1}\beta_{0}$. 
By Remark \ref{rho}, the representation $\rho $ is isomorphic to $\mathrm{Ind } _{\mathrm{Stab}_{K_{2}}(\psi _{\beta} )}^{K_{2}}\theta $ for some irreducible representation $\theta$ of $\mathrm{Stab}_{K_{2}}(\psi _{\beta})$ such that $\theta \mid_{U_{\mathfrak{M}}^{\lfloor\frac{r+1}{2}\rfloor}}$ is a multpile of $\psi_{\beta}$. Note that $\lfloor\frac{r+1}{2}\rfloor=r-1$ for $r=2$ or $r=3$. Denote $S:=\textrm{Stab}_{K_{2}}(\psi_{\beta})$. By \cite[section 2]{S}, %$\mathrm{Stab}_{\GL_{2}(\mathcal{O}_{F}/\mathfrak{p}_{F}^{r})}(\bar{\psi }_{\bar{\beta } })=(\mathcal{O}_{F}/\mathfrak{p}_{F}^{r})[\widehat{\beta }]^{\times } K_{2}^{1}$ where $\widehat{\beta }$ is a lift of $\bar{\beta }$ to a matrix in $\M _{2}(\mathcal{O}_{F}/\mathfrak{p}_{F}^{r})$ and $K_{2}^{1}=K^{1}$ with $p=2$. Therefore 
\begin{center}
$S=\bigcup _{a\in \mathcal{O}_{F}^{\times }}\begin{pmatrix}
a+\mathfrak{p}_{F}&\mathcal{O}_{F}\\
\mathfrak{p}_{F}&a+\mathfrak{p}_{F}
\end{pmatrix}$.
\end{center}  
By Lemma \ref{lem-Ciuchacz}, to prove Theorem \ref{TheoremGL2} it is enough to show the following claim. 

\textbf{Claim.} \textit{The representation $\theta $ contains the trivial character of the subgroup 
$$H_0:=\begin{pmatrix}
1&\mathfrak{p}_{F}^{r-2}\\
0&1
\end{pmatrix}$$ if and only if every irreducible smooth representation of $G$ containing $\rho $ has normalized level less than or equal to $r-2$. }
\\

First we show that if $\theta$ contains the trivial character of $H_0$, then the normalized level of any irreducible smooth representation $\pi$ of $G$ containing $\rho$ is at most $r-2$. Up to $\GL_{2}(\mathcal{O}_{F})$-conjugation 
$$\beta _{0}\in \begin{pmatrix}
\mathfrak{p}_{F}&1+\mathfrak{p}_{F}\\
\mathfrak{p}_{F}&\mathfrak{p}_{F}
\end{pmatrix}.$$ Therefore we can assume that the character $\psi _{\beta}$, and consequently $\theta$, are trivial when restricted to $$H_1:=\begin{pmatrix}
1+\mathfrak{p}_{F}^{r-1}&\mathfrak{p}_{F}^{r-1}\\
0&1+\mathfrak{p}_{F}^{r-1}
\end{pmatrix}.$$ %and $\theta $ is trivial when restricted to this subgroup. 
The groups  $H_0$, %$\begin{pmatrix}1+\mathfrak{p}_{F}^{r-1}&\mathfrak{p}_{F}^{r-1}\\0&1+\mathfrak{p}_{F}^{r-1}\end{pmatrix}$ 
  $H_1$ and $U_\mathfrak{M}^r$ generate  %the group $\begin{pmatrix}1&\mathfrak{p}_{F} ^{r-2}\\0&1\end{pmatrix}$ 
%generate the subgroup $\begin{pmatrix}
%1+\mathfrak{p}_{F}^{r-1}&\mathfrak{p}_{F} ^{r-2}\\
%0&1+\mathfrak{p}_{F}^{r-1}
%\end{pmatrix}$ we can replace in the statement the subgroup $\begin{pmatrix}1&\mathfrak{p}_{F} ^{r-2}\\
%0&1
%\end{pmatrix}$ by $\begin{pmatrix}
%1+\mathfrak{p}_{F}^{r-1}&\mathfrak{p}_{F} ^{r-2}\\
%0&1+\mathfrak{p}_{F}^{r-1}
%\end{pmatrix}$. The subgroups $U_{\mathfrak{M}}^{r}$ and $\begin{pmatrix}1+\mathfrak{p}_{F}^{r-1}&\mathfrak{p}_{F} ^{r-2}\\
%0&1+\mathfrak{p}_{F}^{r-1}
%\end{pmatrix}$ 
the group $$\begin{pmatrix}
1+\mathfrak{p}_{F}^{r-1}&\mathfrak{p}_{F} ^{r-2}\\
\mathfrak{p}_{F}^{r}&1+\mathfrak{p}_{F}^{ r-1}
\end{pmatrix}=\begin{pmatrix}
0&1\\
\varpi_{F} &0
\end{pmatrix}^{-1}U_\mathfrak{M}^{r-1}\begin{pmatrix}
0&1\\
\varpi_{F} &0
\end{pmatrix},$$ %which is conjugate by the matrix $\begin{pmatrix}
%0&1\\
%\varpi_{F} &0
%\end{pmatrix}$ to $U_{\mathfrak{M}}^{r-1}$. 
so $\theta $ contains the trivial character of a $\GL_{2}(\mathcal{O}_{F})$-conjugate of $U_\mathfrak{M}^{r-1}.$ Every irreducible smooth representation $\pi$ of $G$ containing $\rho$ also contains $\theta$ and consequently the trivial character of $U_\mathfrak{M}^{r-1}$. This means that the normalized level of $\pi$ is at most $r-2$. %contains the trivial character of $\begin{pmatrix} 
%1+\mathfrak{p}_{F}^{r-1}&\mathfrak{p}_{F} ^{r-2}\\
%0&1+\mathfrak{p}_{F}^{ r-1}
%\end{pmatrix}$ then every irreducible smooth representation of $G$ containing $\rho $ contains the trivial character of $U_{\mathfrak{M}}^{r-1}$. 

Now assume that every irreducible smooth representation of $G$ which contains $\rho $ has normalized level less than or equal to $r-2$. By (\cite[11.1 Proposition 1]{BH}), there exists $g \in G$ such that \begin{center}
$\mathrm{Hom}_{(U_{\mathfrak{M}}^{r-1})^{g}\cap S}(1_{(U_{\mathfrak{M}}^{r-1})^{g}\cap S}, \theta)\neq 0$.
\end{center}  
Therefore $\theta $ contains the trivial character of $(U_{\mathfrak{M}}^{r-1})^{g}\cap S$. This property depends only on the double coset $N_{G}(U_{\mathfrak{M}}^{r-1})gS$, where $N_{G}(U_\mathfrak{M}^{r-1})$ denotes the normalizer of $U_\mathfrak{M}^{r-1}$ in $G$. %We will find a suitable set of representatives
%Firstly we will show to what kind of elements of $G$ we can restrict our considerations.  
By \cite[12.3]{BH}, $N_{G}(U_{\mathfrak{M}}^{r-1})=K_{2}Z$.  By \cite[17.1 Proposition]{BH},
\begin{equation*}
G=\bigcup_{m\in\mathbb{Z}}K_{2}Z
\begin{pmatrix}
\varpi_{F}^{m}&0\\
0&1
\end{pmatrix}
\begin{pmatrix}
\mathcal{O}_{F}^{\times}&\mathcal{O}_{F}\\
\mathfrak{p}_{F}&\mathcal{O}_{F}^{\times}\end{pmatrix}.
\end{equation*}
Since $\begin{pmatrix} \mathcal{O}_{F}^{ \times} & \mathcal{O}_{F} \\ 
\mathfrak{p}_{F} & \mathcal{O}_{F}^{ \times } \end{pmatrix}= \bigcup _{c \in \mathcal{O} _{F} ^{ \times }} \begin{pmatrix} c&0\\ 
0&1 
\end{pmatrix} S$, we can assume that $g$ is of the form either $g_{1,n,c}=\begin{pmatrix}
1&0\\
0&c\varpi_{F}  ^{n}
\end{pmatrix}$ or $g_{2,n,c}=\begin{pmatrix}
c\varpi_{F} ^{n}&0\\
0&1
\end{pmatrix}$ for some $c\in \mathcal{O}_{F}^{\times }$ and $n\in \mathbb{N}$. 
\\
Compute \begin{center}
$(U_{\mathfrak{M}}^{r-1})^{g_{1,n,c}}\cap S= \begin{pmatrix}
1+\mathfrak{p}_{F}^{r-1}&\mathfrak{p}_{F}^{r-1+n}
\\
\mathfrak{p}_{F}^{r-1-n}\cap \mathfrak{p}_{F}&1+\mathfrak{p}_{F}^{r-1}
\end{pmatrix}$
\end{center}   
\begin{center}
$(U_{\mathfrak{M}}^{r-1})^{g_{2,n,c}}\cap S= \begin{pmatrix}
1+\mathfrak{p}_{F}^{r-1}&\mathfrak{p}_{F}^{r-1-n}\cap \mathcal{O}_{F}\\
\mathfrak{p}_{F}^{r-1+n}&1+\mathfrak{p}_{F}^{r-1}
\end{pmatrix}$.
\end{center} In particular, $\theta $ contains the trivial character of at least one of the following:
\begin{enumerate}
\item $\bigcap _{n\geqslant 0, c\in \mathcal{O}_{F}^{\times }}(U_{\mathfrak{M}}^{r-1})^{g_{1,n,c}}\cap \bigcap _{c\in \mathcal{O}_{F}^{\times }}(U_{\mathfrak{M}}^{r-1})^{g_{2,0,c}}\cap  S=\begin{pmatrix}1+\mathfrak{p}_{F}^{ r-1}&0\\
\mathfrak{p}_{F}^{ r-1}&1+\mathfrak{p}_{F}^{ r-1}
\end{pmatrix}
$
\item $\bigcap _{n>0,c\in \mathcal{O}_{F}^{\times }}(U_{\mathfrak{M}}^{ r-1})^{g_{2,n,c}}\cap S=\begin{pmatrix} 1+\mathfrak{p}_{F}^{ r-1} & \mathfrak{p}_{F} ^{ r-2}\\
0&1+\mathfrak{p}_{F}^{ r-1}
\end{pmatrix}.$ 
\end{enumerate}
The restriction $\theta\mid _{U_{\mathfrak{M}}^{ r-1}}$ is a multiple of $\psi _{\beta }$. Since  
\begin{equation*}\psi _{\beta }\left( \begin{pmatrix}
1+\mathfrak{p}_{F}^{ r-1}&0\\
\mathfrak{p}_{F}^{ r-1}&1+\mathfrak{p}_{F}^{ r-1}
\end{pmatrix}\right) =\psi (\mathcal{O}_{F})\neq \{1\},
\end{equation*} $\theta $ does not contain the trivial character of 
$\begin{pmatrix} 
1+ \mathfrak{p}_{F}^{ r-1}&0\\ 
\mathfrak{p}_{F}^{ r-1} & 1+ \mathfrak{p}_{F}^{r-1} 
\end{pmatrix}$. Therefore $\theta $ contains the trivial character of $\begin{pmatrix}
1+\mathfrak{p}_{F}^{r-1}&\mathfrak{p}_{F} ^{r-2}\\
0&1+\mathfrak{p}_{F}^{r-1}
\end{pmatrix}$.  
 \end{proof}
\begin{corollary}[see subsection \ref{regularity} and section \ref{sectionexample}]
Every cuspidal type on $K_{2}$ is a regular representation. However not every regular representation of $K_{2}$ is a cuspidal type on $K_{2}$.
\end{corollary}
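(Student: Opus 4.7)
The corollary amounts to two claims. For the first---that every cuspidal type on $K_2$ is regular---I would invoke Theorem \ref{TheoremGL2}: since both being a cuspidal type and being regular are invariant under one-dimensional twists, it suffices to treat the three listed families. In family $(1)$ the representation has conductor $1$ and is regular by convention. In family $(2)$ the orbit contains some $\alpha$ whose characteristic polynomial is irreducible modulo $\mathfrak{p}_F$; then $\bar{\alpha}\in\M_2(k_F)$ generates the quadratic extension $k_{F^2}/k_F$, so its centralizer in $\GL_2(k_F)$ equals $k_F[\bar{\alpha}]^\times\cong k_{F^2}^\times$, which is abelian. In family $(3)$ the orbit contains $\beta$ with Eisenstein characteristic polynomial; then $\bar{\beta}\in\M_2(k_F)$ is a non-zero matrix with characteristic polynomial $x^2$, hence $\GL_2(k_F)$-conjugate to $\begin{pmatrix}0&1\\0&0\end{pmatrix}$, whose centralizer $\lbrace aI+b\bar{\beta}:a\in k_F^\times,\ b\in k_F\rbrace$ is abelian.

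For the second claim---that not every regular representation is a cuspidal type on $K_2$---I would rely on the explicit construction in Section \ref{sectionexample}. That section exhibits two representations of $\GL_2(\mathcal{O}_F)$ sharing the same orbit (which is of cuspidal-type form, in particular having abelian centralizer modulo $\mathfrak{p}_F$), so that both representations are regular, yet only one of them is a cuspidal type. The other member of the pair then provides a regular representation which fails to be a cuspidal type.

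The main obstacle is the second half: it is not self-contained, its entire content being the example worked out in Section \ref{sectionexample}, where one verifies by hand that the two representations with the same orbit behave differently with respect to containment in irreducible representations of $\GL_2(F)$. The first half, by contrast, is essentially automatic once Theorem \ref{TheoremGL2} is available, since each of the relevant centralizers is computed by a short direct calculation.
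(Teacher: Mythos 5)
Your argument is correct and matches what the paper has in mind (the paper gives no explicit proof of the corollary, leaving it to follow from Theorem \ref{TheoremGL2} together with the discussion of regularity). Two remarks on the details. For the first half, your hands-on $n=2$ centralizer computations in families (2) and (3) are a lightweight substitute for the paper's general machinery: family (3) is exactly what Proposition \ref{PropositionEisensteinregular} establishes for arbitrary $n$ via Skolem--Noether and Lemma \ref{conjugation}, and your calculation is a perfectly valid shortcut in dimension $2$; you might, however, spell out why $\bar\beta\neq 0$ (the constant term of an Eisenstein polynomial has valuation exactly $1$, so $\det\beta\notin\mathfrak{p}_F^2$, forcing $\beta\notin\M_2(\mathfrak{p}_F)$). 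For the second half, invoking Section \ref{sectionexample} does work, and has the merit of producing a regular representation whose orbit is of \emph{cuspidal-type form} yet which is not a cuspidal type -- a genuinely subtle phenomenon. But for the literal statement of the corollary a cheaper argument is available: take any representation whose orbit reduces mod $\mathfrak{p}_F$ to a split regular semisimple matrix (distinct eigenvalues in $k_F$); this is regular, its characteristic polynomial is reducible and non-Eisenstein, and adding a scalar (i.e.\ twisting by a one-dimensional character) cannot change this, so by Theorem \ref{TheoremGL2} it is not a cuspidal type. Finally, the phrase ``regular by convention'' for the conductor-$1$ family deserves a word: the paper's orbit-based definition of regular only addresses conductor $>1$, but cuspidal representations of $\GL_2(k_F)$ are precisely the Deligne--Lusztig representations attached to regular characters of the non-split torus, and these are counted as regular in Shintani's and Hill's setup, so the claim is standard but not self-evident from the paper's definition.
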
 
\end{subsection} 

\begin{subsection}{Regularity of cuspidal types} \label{regularity} In this subsection we determine which cuspidal types are regular. Matrices whose characteristic polynomial is irreducible modulo $\mathfrak{p}_{F}$ are regular so we focus on matrices of the form $\Pi_{\mathfrak{I}}^{j}B$ where $j\in\mathbb{N}$, $0<j<p$ and $B\in U_{\mathfrak{I}}$. In this subsection we work with matrices of arbitrary dimension $n\geqslant 2$, not necessarily prime. 

Fix a natural number $n\geqslant 2$. Define $ \mathfrak{I}$, $\Pi _{ \mathfrak{I}}$ analogously as in subsection \ref{chain1} but for the dimension $n$.  

\begin{lemma}\label{conjugation}
Let $M\in \M_{n}(\mathcal{O} _{F})$ be a matrix whose characteristic polynomial is Eisenstein. Then any $g\in \GL_{n}(F)$ such that $g^{-1}Mg\in \M _{n}(\mathcal{O}_{F})$ is of the form $g\in Z_{\GL_{n}(F)}(M)\GL_{n}(\mathcal{O} _{F})$ where $Z_{\GL_{n}(F)}(M)$ denotes the centralizer of $M$ in $\GL_{n}(F)$. In particular, there exists an element $h\in \GL_{n}(\mathcal{O} _{F})$ such that $h^{-1}Mh=g^{-1}Mg$. \end{lemma}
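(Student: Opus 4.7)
The plan is to exploit the fact that the ring $\mathcal{O}_F[M]$ is the full ring of integers $\mathcal{O}_E$ of the totally ramified extension $E=F[M]$, and then reduce the conjugation problem to a statement about $\mathcal{O}_E$-lattices in the one-dimensional $E$-vector space obtained by letting $M$ act on $F^n$.

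First I would set up the algebra. Since the characteristic polynomial of $M$ is Eisenstein of degree $n$, it is irreducible, so $E:=F[M]$ is a field extension of $F$ of degree $n$, totally ramified, and $M$ plays the role of a uniformizer $\varpi_E$; in particular $\mathcal{O}_E=\mathcal{O}_F[M]$. View $V:=F^n$ as an $E$-vector space by letting a polynomial $f(M)\in F[M]$ act as the corresponding matrix. Since $\dim_F V = n = [E:F]$, $V$ is one-dimensional over $E$, so choosing any nonzero vector identifies $V$ with $E$ as $E$-modules (and as $F$-vector spaces).

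Next, observe that the standard lattice $L:=\mathcal{O}_F^n\subset V$ is stable under $M$ (because $M\in \M_n(\mathcal{O}_F)$), hence stable under $\mathcal{O}_F[M]=\mathcal{O}_E$. So $L$ is a finitely generated $\mathcal{O}_E$-submodule of $V\cong E$, i.e.\ an $\mathcal{O}_E$-lattice in $E$; and the stabilizer of $L$ in $\GL_F(V)$ is exactly $\GL_n(\mathcal{O}_F)$. Now let $g\in \GL_n(F)$ with $g^{-1}Mg\in \M_n(\mathcal{O}_F)$. Then $M$ stabilizes $gL$, so $gL$ is also an $\mathcal{O}_E$-lattice in the one-dimensional $E$-space $V\cong E$. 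Since $\mathcal{O}_E$ is a discrete valuation ring, every such lattice is of the form $e\cdot L$ for some $e\in E^\times$; thus there exists $e\in E^\times=F[M]^\times$ with $gL = eL$.

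Finally, $e$ is a polynomial in $M$, hence lies in $Z_{\GL_n(F)}(M)$. From $gL=eL$ we get $e^{-1}g\in \GL_n(\mathcal{O}_F)$, so writing $h:=e^{-1}g\in \GL_n(\mathcal{O}_F)$ we have $g=eh$ with $e\in Z_{\GL_n(F)}(M)$ and $h\in \GL_n(\mathcal{O}_F)$, proving the first assertion. For the last sentence, since $e$ commutes with $M$,
\begin{equation*}
h^{-1}Mh \;=\; h^{-1}(e^{-1}Me)h \;=\; (eh)^{-1}M(eh) \;=\; g^{-1}Mg,
\end{equation*}
as required. The only mildly subtle point is the identification $\mathcal{O}_E=\mathcal{O}_F[M]$, but this is the standard fact that a root of an Eisenstein polynomial is a uniformizer of the totally ramified extension it generates; the rest is lattice bookkeeping and should pose no real difficulty.
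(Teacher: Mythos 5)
Your proof is correct and takes essentially the same route as the paper's: both view $F^n$ as a one-dimensional $E$-vector space with $E=F[M]$, observe that $\mathcal{O}_F^n$ and $g\mathcal{O}_F^n$ are $\mathcal{O}_E$-lattices there, and conclude they differ by an element of $E^\times\subseteq Z_{\GL_n(F)}(M)$. The only stylistic difference is that you invoke the standard fact that a root of an Eisenstein polynomial is a uniformizer with $\mathcal{O}_E=\mathcal{O}_F[M]$, whereas the paper proves the identity $\mathcal{O}_E=\mathcal{O}_F+\mathcal{O}_FM+\cdots+\mathcal{O}_FM^{n-1}$ from scratch via a Nakayama-style iteration.
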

\begin{proof}Take the lattice $\Lambda =\mathcal{O}_{F} ^{n}$. For any matrix $Q\in \GL_{n}(F)$ the following holds \begin{equation}\label{warunek} Q\Lambda =\Lambda\quad  \text{if and only if}\quad Q\in \GL_{n}(\mathcal{O} _{F}).\end{equation} Therefore to prove Lemma \ref{conjugation} it is enough to show that for any $g\in\GL_{n}(F)$ such that $g^{-1}Mg\in\textrm{M}_{n}(\mathcal{O}_{F})$ we have $g\Lambda=z \Lambda $ for some $z\in Z_{\GL_{n}(F)}(M)$. 

  Let $E:=F[M]$. Since the characteristic polynomial of $M$ is irreducible, $E$ is a field. The action of $M$ on $F^{n}$ turns it into a one-dimensional $E$-vector space. Fix a non-zero element $v\in F^{n}$. The following map is an isomorphism of $E$-vector spaces $i:E\ni x\mapsto xv\in F^{n}$. 
  %By definition it is $E$-linear homomorphism. Since both $E$ and $F^{n}$ are finitely dimensional over $E$ to check that $i$ is a bijection it is enough to check that $i$ is injective. Assume it is not. Then take $x,y\in E$ such that $x\neq y$ and $xv=yv$. Then ${\rm det}(x-y)=0$ but this is impossible because $x-y$ is invertible. 
 
We want to show that $\Lambda$ and $g\Lambda $ are $\mathcal{O}_{E}$-modules.
%Of course $M^{j}\in Z_{\GL_{n}(F)}(M)$. Therefore to finish the proof it is enough to show that $\mathcal{O}_{E}\Lambda\subseteq \Lambda$ and $\mathcal{O}_{E}g\Lambda\subseteq g\Lambda$.
 Both $\Lambda$ and $g\Lambda$ are $\mathcal{O}_{F}[M]$-modules so it is enough to show that $\mathcal{O}_{E}=\mathcal{O} _{F}+\mathcal{O} _{F}M+\ldots +\mathcal{O} _{F}M^{n-1}$. Since the residue field of $E$ and of $F$ are the same, we have $\mathcal{O}_{E}=\mathcal{O} _{F}+\mathcal{O}_{E}M$. Therefore $\mathcal{O}_{E}=\mathcal{O}_{F}+\mathcal{O}_{F}M+\ldots+\mathcal{O}_{F}M^{n-1}+\mathcal{O}_{E}M^{n}$. By Nakayama's lemma 
 %$\sum _{j=0}^{\infty }\mathcal{O} _{F}M^{j}$. We want to show that for $j\geqslant n$ we have $\mathcal{O} _{F}M^{j}\subseteq \mathcal{O} _{F}+\mathcal{O} _{F}M+\ldots +\mathcal{O} _{F}M^{n-1}$. We do it by the induction. If $j=n$ then since the characteristic polynomial  of $M$ is equal to the minimal polynomial it is true. Assume that we have the inclusion for $j$. Then $\mathcal{O} _{F}M^{j+1}\subseteq \mathcal{O} _{F}M+\mathcal{O} _{F}M^{2}+\ldots +\mathcal{O} _{F}M^{n}\subseteq \mathcal{O} _{F}+\mathcal{O} _{F}M+\ldots +\mathcal{O} _{F}M^{n-1}$. Therefore 
 $\mathcal{O}_{E}=\mathcal{O} _{F}+\mathcal{O} _{F}M+\ldots +\mathcal{O} _{F}M^{n-1}$.   
 %Of course $\mathcal{O} _{F}\Lambda\subseteq \Lambda$ and $\mathcal{O} _{F}g\Lambda\subseteq g\Lambda $. Therefore it is enough to check that $M\Lambda\subseteq \Lambda$ and $Mg\Lambda\subseteq g\Lambda$. Since both $M$ and $g^{-1}Mg$ are matrices in $\M _{n}(\mathcal{O} _{F})$, these are true.  

Recall that fractional ideals of $E$ are finitely generated $O_{E}$-submodules of $E$. Since $i$ is $E$-linear there exist fractional ideals $I_{1}$ and $I_{2}$ of $E$ such that $i(I_{1})=\Lambda$ and $i(I_{2})=g\Lambda$. The fractional ideals of $E$ are generated by powers of $M$, so there exists an integer number $j$ such that $g\Lambda =M^{j}\Lambda$. 
 \end{proof}   
 
\begin{proposition} \label{PropositionEisensteinregular}Let $M\in \M _{n}(\mathcal{O} _{F})$ be a matrix whose characteristic polynomial $f$ is Eisenstein. Then $M$ is $\GL_{n}(\mathcal{O} _{F})$-conjugate to the companion matrix of $f$, which is regular. In particular, $M$ is regular.\end{proposition}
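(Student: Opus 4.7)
The plan is to exploit the fact that an Eisenstein polynomial is irreducible, hence $E := F[M] = F[x]/(f(x))$ is a field, and to realize $V := F^n$ as a one-dimensional $E$-vector space via the action of $M$. Since $f$ is Eisenstein, the extension $E/F$ is totally ramified of degree $n$ and $M$ itself is (identified with) a uniformizer of $\mathcal{O}_E$.

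Set $\Lambda := \mathcal{O}_F^n \subseteq V$. By the argument already carried out inside the proof of Lemma \ref{conjugation}, one has $\mathcal{O}_E = \mathcal{O}_F + \mathcal{O}_F M + \ldots + \mathcal{O}_F M^{n-1}$ and $M\Lambda \subseteq \Lambda$, so $\Lambda$ is an $\mathcal{O}_E$-submodule of $V$. As $\Lambda$ is finitely generated and torsion-free over the DVR $\mathcal{O}_E$, it is free, and comparing $\mathcal{O}_F$-ranks ($n = [E:F]\cdot \mathrm{rk}_{\mathcal{O}_E}\Lambda$) shows it has $\mathcal{O}_E$-rank $1$. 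Pick an $\mathcal{O}_E$-generator $v$ of $\Lambda$. Using the description of $\mathcal{O}_E$ above, the family $\{v, Mv, \ldots, M^{n-1}v\}$ generates $\Lambda$ over $\mathcal{O}_F$, and since $\Lambda$ has $\mathcal{O}_F$-rank $n$ it must be an $\mathcal{O}_F$-basis of $\Lambda$. With respect to this basis $M$ acts by the companion matrix $C_f$ of $f$. The change-of-basis matrix $P$ with columns $v, Mv, \ldots, M^{n-1}v$ then lies in $\GL_n(\mathcal{O}_F)$ and satisfies $P^{-1}MP = C_f$.

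It remains to verify that $C_f$ is regular in the sense of Subsection \ref{introductionorbits}. Reducing modulo $\mathfrak{p}_F$, the Eisenstein hypothesis on $f$ gives $\bar{f}(x) = x^n$, so $\overline{C_f}$ is the standard nilpotent Jordan block of size $n$. Its centralizer in $\M_n(k_F)$ is $k_F[\overline{C_f}]$, which is commutative; hence its centralizer in $\GL_n(k_F)$ is abelian, so $C_f$ is regular and therefore $M$ is regular.

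The only step with any content is the identification of $\Lambda$ as a free rank-one $\mathcal{O}_E$-module, which is where the Eisenstein (and hence totally ramified) hypothesis enters essentially; everything else is a routine unwinding using the material already established in Lemma \ref{conjugation}.
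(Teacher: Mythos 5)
Your proof is correct, and it takes a genuinely different route from the paper's. The paper proceeds in two stages: first it uses the Skolem--Noether theorem to get $M = BCB^{-1}$ for some $B \in \GL_n(F)$, and then it invokes the separate Lemma~\ref{conjugation} (whose proof contains the $\mathcal{O}_E$-lattice analysis) to upgrade the $\GL_n(F)$-conjugacy to a $\GL_n(\mathcal{O}_F)$-conjugacy. You instead short-circuit both steps: by recognizing $\Lambda = \mathcal{O}_F^n$ as a free rank-one $\mathcal{O}_E$-module with generator $v$, you read off the cyclic $\mathcal{O}_F$-basis $\{v, Mv, \ldots, M^{n-1}v\}$ and thereby produce the conjugating matrix $P \in \GL_n(\mathcal{O}_F)$ explicitly, with no appeal to Skolem--Noether and no need to separate the $\GL_n(F)$-conjugacy from the integral refinement. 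Your argument does reuse the computations from inside Lemma~\ref{conjugation} showing $\mathcal{O}_E = \mathcal{O}_F[M]$ (which is where the Eisenstein hypothesis enters, via total ramification making $k_E = k_F$), but it replaces the paper's final classification of $\mathcal{O}_E$-lattices by powers of $M$ with the cleaner structure-theorem-for-DVR-modules argument. The upshot is a shorter and more constructive proof; the paper's two-step route has the advantage that Lemma~\ref{conjugation} is a reusable statement of independent interest (it pins down the full set of $g$ conjugating $M$ into $\M_n(\mathcal{O}_F)$), which is information your proof does not recover. One tiny point worth spelling out: that $n$ generators of a free $\mathcal{O}_F$-module of rank $n$ form a basis is a Nakayama argument, which you tacitly use when passing from ``generates'' to ``is an $\mathcal{O}_F$-basis.''
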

\begin{proof}Denote by $C$ the companion matrix of $f$. 
%We prove that since $f$ is irreducible the matrices $M$ and $C$ are %$\GL_{n}(F)$-conjugate. Define the following maps: for any polynomial $b$ with coefficients in $F$\begin{equation*}h_{1}:F[M]\ni b(M)\mapsto b(M)\in \M _{n}(F)\quad \text{and}\quad h_{2}:F[M]\ni b(M)\mapsto b(C)\in \M _{n}(F).\end{equation*}Of course $h_{1}$ is well defined. To check that $h_{2}$ is well defined it is enough to check that $h_{2}(0)=0$. The polynomial $f$ is irreducible and therefore the minimal polynomial of both $M$ and $C$ is equal $f$. Therefore $h_{2}(0)=h_{2}(f(M)) =f(C)=0$. By definition $h_{1}$ and $h_{2}$ are $F$-algebra homomorphisms. Since $f$ is irreducible $F[M]$ is a field. 
By the Skolem--Noether theorem there exists $g\in \GL_{n}(F)$ such that $M=gCg^{-1}$. By Lemma \ref{conjugation}, matrices $M$ and $C$ are $\GL_{n}(\mathcal{O} _{F})$-conjugate. 
 %Since $f$ is Eisenstein the image of $C$ in 
  
%Both the characteristic polynomial and the minimal polynomial of $D$ is equal to $x^{n}$. Since the characteristic polynomial is equal to the minimal polynomial by (\cite{c},Cor) $Z_{GL_{N}(F)}(D)=k[D]$ and the centralizer is abelian. Therefore the matrices $C$ and $M$ are regular.

\end{proof}Observe that the characteristic polynomial $f(x)$ of $\Pi _{ \mathfrak{I}} B$, with $B \in U _{ \mathfrak{I}}$, is equal to $x^{p}$ modulo $\mathfrak{p}_{F}$ and $f(0)= {\rm det} ( \Pi _{ \mathfrak{I}} B)= u \varpi _{F}$ for some $u \in \mathcal{O}_{F}^{ \times }$. Therefore $f(x)$ is Eisenstein and Proposition \ref{PropositionEisensteinregular} proves that the matrices of the form $ \Pi _{ \mathfrak{I}} B$ with $B\in U_{\mathfrak{I}}$ are regular. 
\begin{proposition}\label{Propositionnotregular}Matrices of the form $\Pi _{ \mathfrak{I}}^{j} B$ for $j \in \mathbb{N}$, $1<j<p$ and $B \in U_{ \mathfrak{I}}$ are not regular.  
\end{proposition}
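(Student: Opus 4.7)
The strategy is to show that the reduction $\bar{M} \in \M_p(k_F)$ of $M = \Pi_{\mathfrak{I}}^j B$ modulo $\mathfrak{p}_F$ has kernel of dimension at least $2$, hence is not cyclic, hence has non-abelian centralizer in $\GL_p(k_F)$. Since any other representative of the orbit reduces to a $\GL_p(k_F)$-conjugate of $\bar{M}$ (the reduction $\GL_p(\mathcal{O}_F/\mathfrak{p}_F^{r-l})\twoheadrightarrow\GL_p(k_F)$ carries conjugation to conjugation) and (non-)abelianness of the centralizer is conjugation invariant, this suffices to rule out regularity.

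The key computation is the rank of $\bar{\Pi}_{\mathfrak{I}}^{j}$. The action of $\Pi_{\mathfrak{I}}$ on the standard basis $e_{1},\ldots,e_{p}$ is $\Pi_{\mathfrak{I}}(e_{k})=e_{k-1}$ for $k>1$ and $\Pi_{\mathfrak{I}}(e_{1})=\varpi_{F}e_{p}$, so iteration yields
\[
\Pi_{\mathfrak{I}}^{j}(e_{k}) = \begin{cases} e_{k-j}, & k>j, \\ \varpi_{F}\,e_{k-j+p}, & 1\leq k\leq j. \end{cases}
\]
Reduction modulo $\mathfrak{p}_{F}$ annihilates the second family, so $\bar{\Pi}_{\mathfrak{I}}^{j}$ has $1$'s on the $j$-th superdiagonal and zeros elsewhere; in particular $\operatorname{rank}(\bar{\Pi}_{\mathfrak{I}}^{j})=p-j$. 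Since $B\in U_{\mathfrak{I}}$ forces $\bar{B}\in\GL_{p}(k_{F})$ (the upper-triangular part of a Borel), right multiplication by $\bar{B}$ preserves rank, and so $\operatorname{rank}(\bar{M})=p-j$ and $\dim\ker\bar{M}=j\geq 2$.

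Consequently the eigenvalue $0$ of $\bar{M}$ has geometric multiplicity at least $2$, so the Jordan form of $\bar{M}$ has at least two blocks attached to $0$ and $\bar{M}$ is not cyclic. A non-cyclic matrix in $\M_{p}(k_{F})$ necessarily has non-abelian centralizer in $\GL_{p}(k_{F})$: concretely, picking two linearly independent vectors $v,w\in\ker\bar{M}$ and a complementary subspace, any invertible map acting as a non-scalar element of $\GL_{2}(k_{F})$ on $\langle v,w\rangle$ and as the identity on the complement commutes with $\bar{M}$, and such maps generate a copy of $\GL_{2}(k_{F})$ in the centralizer, which is non-abelian for any finite field. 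Combined with the conjugation-invariance argument above, this shows $M$ is not regular. The only nontrivial step is the rank calculation for $\bar{\Pi}_{\mathfrak{I}}^{j}$; everything else is routine linear algebra together with the observation that the reduction mod $\mathfrak{p}_{F}$ of the orbit is a single $\GL_{p}(k_{F})$-conjugacy class.
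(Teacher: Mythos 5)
Your overall approach is the same as the paper's: reduce $M=\Pi_{\mathfrak{I}}^{j}B$ modulo $\mathfrak{p}_F$, observe that the reduction $\bar{M}$ is a nilpotent matrix, and show that $\bar{M}$ is not conjugate to a single Jordan block (equivalently, not regular nilpotent). The paper rules out the single Jordan block via the nilpotency index of $\bar{M}$, citing a classification result for regular nilpotent matrices; you rule it out by the rank computation $\operatorname{rank}(\bar{M})=p-j$ and hence $\dim\ker\bar{M}=j\geq 2$. Both roads work and amount to the same observation; your rank argument is a little more self-contained and does not need the cited classification.

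However, your explicit construction of a non-abelian subgroup of the centralizer is wrong as stated. You pick $v,w\in\ker\bar{M}$ and an arbitrary complement $W$ and claim that any invertible map acting as a non-scalar element of $\GL_2(k_F)$ on $\langle v,w\rangle$ and as the identity on $W$ commutes with $\bar{M}$. This fails because $W$ need not be $\bar{M}$-stable. Concretely, if $\bar{M}$ has Jordan blocks of sizes $2$ and $1$ with basis $e_1,e_2,e_3$ ($\bar{M}e_2=e_1$, $\bar{M}e_1=\bar{M}e_3=0$), then $\ker\bar{M}=\langle e_1,e_3\rangle$ and the map $T$ swapping $e_1,e_3$ and fixing $e_2$ has $T\bar{M}e_2=e_3\neq e_1=\bar{M}Te_2$. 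The conclusion you want — that a non-cyclic matrix has non-abelian $\GL_p(k_F)$-centralizer — is a standard fact (the centralizer of $A$ in $\M_p(k_F)$ equals $k_F[A]$ iff $A$ is cyclic), and can be proved directly with a correct construction using a Jordan basis (e.g., for two blocks spanned by $e_1,\dots,e_a$ and $f_1,\dots,f_b$ with $a\geq b$, the maps $T_1\colon f_i\mapsto f_i+e_i$ and $T_2\colon e_{a-b+i}\mapsto e_{a-b+i}+f_i$, both commuting with $\bar{M}$, do not commute with each other). You should either supply such a construction or simply cite the cyclic-iff-abelian-centralizer criterion; as written, the crucial step of the proof does not hold.
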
 
\begin{proof} Denote by $D$ the reduction of $\Pi _{ \mathfrak{I}}^{j}B$ modulo $\mathfrak{p}_{F}\M_{n}(\mathcal{O}_{F})$. For the sake of contradiction assume that $D$ is regular. Denote by $\bar{ \Pi}_{ \mathfrak{I}}$ the reduction of $\Pi _{ \mathfrak{I}}$ modulo $ \mathfrak{p}_{F}\M_{n}(\mathcal{O}_{F})$. The matrix $D$ is nilpotent. By \cite[14.16 Proposition]{MD}, a regular nilpotent matrix in $\M_{n}(k_{F})$ is $\GL _{n}( k_{F})$-conjugate to $\bar{ \Pi }_{ \mathfrak{I}}$. This is impossible because $D^{n-1} = 0$ and $\left( \bar {\Pi }_{\mathfrak{I}}\right)^{n-1}\neq 0$.
\end{proof} 
 \end{subsection}
\end{section}

\begin{section}{Example} \label{sectionexample} 

In this section we give an example of two representations of $K_{2}=\GL_{2}( \mathcal{O}_{F})$ with the same orbits and the same conductor but one of them will be a cuspidal type and the second will not. This illustrates the fact that it is not always enough to know the orbits to determine if a representation is a cuspidal type. Let $$S=\bigcup _{ a \in \mathcal{O}_{F}^{ \times}} \begin{pmatrix}a+ \mathfrak{p} _{F}& \mathcal{O}_{F}\\ \mathfrak{p}_{F} & a+ \mathfrak{p}_{F} \end{pmatrix},\beta _{1} =\varpi _{F} ^{-1} \begin{pmatrix} 0&1\\ \varpi _{F}& 0 \end{pmatrix}\text{ and }\beta _{2} = \varpi _{F}^{-1} \begin{pmatrix} 0&1\\0&0 \end{pmatrix}.$$ Define $\theta _{1}, \theta _{2}: S \rightarrow \mathbb{C}^\times$ as follows: 
\begin{equation*} \theta _{1}(a {\rm Id} _{2 \times 2} +x)= \psi(\tr (a^{-1}\beta _{1} x))\quad \text{and}\quad \theta _{2}(a {\rm Id} _{2 \times 2}+x)= \psi ( \tr (a^{-1} \beta _{2}x))\end{equation*} where $a \in \mathcal{O}_{F}^{\times}$ and $x \in \begin{pmatrix} \mathfrak{p}_{F} & \mathcal{O}_{F}\\ \mathfrak{p}_{F}& \mathfrak{p}_{F} \end{pmatrix}$.
Simple computation shows that $\theta_1$ and $\theta_2$ are well defined characters of $S$. 
Let \begin{equation*} \rho _{1}:= {\rm Ind} _{S}^{K_{2}} \theta _{1} \quad \text{and} \quad \rho _{2}:= {\rm Ind} _{S}^{K_{2}} \theta _{2}. \end{equation*} \

\begin{proposition}Both representations $\rho_{1}$ and $\rho _{2}$ have conductor $2$ and contain the matrix $\beta _{0}:= \varpi _{F} \beta _{1} $ in their orbits. The representation $\rho _{1}$ is a cuspidal type but $ \rho _{2}$ is not. 
\end{proposition}   
\begin{proof} 
Both $\rho _{1}|_{U_{ \mathfrak{M}}^{1}}$ and $\rho _{2}| _{U_{ \mathfrak{M}}^{1}}$ contain $\psi _{ \beta _{1}}=\theta_1|_{U_{\mathfrak{M}}^1}=\theta_2|_{U_{\mathfrak{M}}^1}$, so $\rho _{1}$ and $\rho _{2}$ have conductor $2$ and contain $\beta_0=\varpi _{F} \beta _{1} $ in their orbits.

We prove that $\rho _{2}$ is not a cuspidal type. For the sake of contradiction assume that $\rho _{2}$ is a cuspidal type for an irreducible cuspidal representation $\pi $ of $\GL_{2}(F)$. Assume $l(\pi)=0$. By Theorem \ref{theoremclassificationcusrep},  there exists a representation $\Lambda$ of $ZK$ such that $\Lambda\mid_{K}$ is trivial on $U_{\mathfrak{M}}^{1}$ and $\pi \cong \cInd_{ZK}^{G}\Lambda$. As before $\rho_{2}\cong \Lambda\mid_{K}$. This is impossible becasue $\rho_{2}$ is of conductor $2$. Therefore $l(\pi)>0$. We have 
\begin{equation*}\theta _{2} | _{\begin{pmatrix} 1+ \mathfrak{p}_{F} & \mathcal{O} _{F} \\0& 1+ \mathfrak{p}_{F} \end{pmatrix}}=1\ \ \ \textrm{and} \ \ \ \theta _{2} |_{ U_{ \mathfrak{M}}^{2}}=1.
\end{equation*}
The groups $\begin{pmatrix} 1+ \mathfrak{p}_{F} &\mathcal{O}_{F} \\ 0& 1+ \mathfrak{p}_{F} \end{pmatrix}$and $U_{ \mathfrak{M}}^{2}$ generate $\begin{pmatrix} 1+ \mathfrak{p}_{F} & \mathcal{O}_{F} \\ \mathfrak{p}_{F}^{2} &1+ \mathfrak{p}_{F} \end{pmatrix} $. The last group is $\GL_{2}(F)$-conjugate to $U_{ \mathfrak{M}}^{1}$. Therefore $l( \pi )=0$. This leads to a contradiction.

We have $\textrm{Stab}_{K_{2}}(\bar{\varphi}_{\bar{\beta_{0}}})=S$ and $\theta_{1}$ is not trivial on 
$\begin{pmatrix} 
1&\mathfrak{p}_{F}\\
0&1
\end{pmatrix}$. By Theorem \ref{TheoremGL2}, $\rho _{1}$ is a cuspidal type.        

\end{proof}  
    
\end{section} 
\bibliography{ref} 
\bibliographystyle{plain}   
\end{document}